\documentclass[12pt,oneside,reqno]{amsart}

\usepackage{amsmath}
\allowdisplaybreaks
\usepackage[sort]{natbib}
\usepackage{amsthm,amssymb,mathrsfs,mathtools,float, graphicx,appendix}
\usepackage{quiver,tikz-cd,tikz} 
\usepackage{caption}
\usepackage[dvipsnames,x11names,svgnames]{xcolor}
\usepackage{dsfont}
\usepackage{bbm}
\usepackage{mathrsfs}
\usepackage{mathdots}

\usepackage[colorlinks=true,linkcolor=DarkGreen,citecolor=DarkGreen, urlcolor=DarkGreen,pagebackref]{hyperref}

\usepackage{geometry}

\newcommand\X{\ensuremath{\mathbf{X}}}
\newcommand\Y{\ensuremath{\mathbf{Y}}}
\newcommand\Z{\ensuremath{\mathbf{Z}}}
\newcommand\W{\ensuremath{\mathbf{W}}}
\newcommand\V{\ensuremath{\mathbf{V}}}
\newcommand\U{\ensuremath{\mathbf{U}}}
\newcommand\SSS{\ensuremath{\mathbf{S}}}
\newcommand\TT{\ensuremath{\mathbf{T}}}
\newcommand\E{\ensuremath{\mathbf{E}}}
\newcommand\x{\ensuremath{\mathbf{x}}}
\newcommand\y{\ensuremath{\mathbf{y}}}
\newcommand\z{\ensuremath{\mathbf{z}}}
\newcommand\w{\ensuremath{\mathbf{w}}}
\newcommand\vv{\ensuremath{\mathbf{v}}}
\newcommand\uu{\ensuremath{\mathbf{u}}}

\newcommand{\indep}{\perp\!\!\!\perp}

\usepackage{tikz}
\usepackage{tikz-cd}
\newcommand{\dashedleftrightarrow}{\mathrel{\tikz[baseline=-0.5ex]{\draw[dashed,<->] (0,0)--(0.6,0);}}}

\usepackage{cleveref}

\newtheorem{theorem}{Theorem}[section]       
\newtheorem{lemma}[theorem]{Lemma}            
    
\newtheorem{proposition}[theorem]{Proposition}
  
\newtheorem{example}[theorem]{Example}        
      
\theoremstyle{definition}
\newtheorem{definition}[theorem]{Definition}  
\theoremstyle{remark}
\newtheorem{remark}[theorem]{Remark}

\begin{document}
 
\title{Generalization of Pearl's Front-Door Criterion}

\author{Carol Wu}
\email{\url{cwu88@student.ubc.ca}}
\address{Faculty of Science, University of British Columbia, 6200 University Blvd., Vancouver, BC V6T 1Z4, Canada}

\author{Elina Robeva}
\email{\url{erobeva@math.ubc.ca}}
\address{Department of Mathematics, University of British Columbia, 6200 University Blvd., Vancouver, BC V6T 1Z4, Canada}

\begin{abstract}
    Pearl's front-door criterion provides a set of sufficient conditions for estimating the total causal effect from observational data in the presence of latent confounding, using the functional $P(\y \mid \operatorname{do}(\x := \x^*)) = \sum_{\z} P(\z \mid \x^*) \sum_{\x} P(\y \mid \z, \x) P(\x)$. An open question is whether these conditions can be generalized to be both necessary and sufficient for the validity of this functional, similar to the generalization achieved for the back-door adjustment criterion by Shpitser \cite{SHPITSER10}. In this paper, we present a new, weakened set of graph-based conditions sufficient for the front-door formula to estimate the total causal effect, expanding the scope of problems amenable to front-door identification.
\end{abstract}

\maketitle
%\bogo{get rid of bayesian network definition}
\section{Introduction} \label{sec: intro}
    A fundamental problem in causal inference is determining when an interventional distribution, $P(\y \mid \operatorname{do}(\x := \x^*))$, can be identified from the observational distribution $P(\vv)$. While complete algorithms exist for identification \cite{SHPITSER10, TIKKA17}, they rely on having a fully specified causal graph, limiting their practical utility. Herein lies motivation for the development of general identification criteria—such as the back-door \cite{PEARL09}, front-door \cite{PEARL09}, and covariate adjustment criterion \cite{SHPITSER10}—which operate under coarser assumptions of the corresponding causal graph. These provide graph-based conditions for the application of specific identification functionals.

    For example, Pearl's back-door criterion gives sufficient conditions for identifying the causal effect $P(\y \mid \operatorname{do}(\x := \x^*))$ via the covariate adjustment formula
    \begin{align*}
        P(\y \mid \operatorname{do}(\x := \x^*)) = \sum_\z P(\y \mid \x^*, \z) P(\z) \tag{\textasteriskcentered} \label{eq: cov_adj}.  
    \end{align*}
    However, its conditions are not necessary for (\ref{eq: cov_adj}) to hold. This incompleteness was resolved by Shpitser \cite{SHPITSER10}, who generalized it to a sufficient \textit{and} necessary graphical criterion for covariate adjustment (\ref{eq: cov_adj}) to hold.

    Similarly, the front-door criterion justifies the front-door functional
    \begin{align*}
        P(\y \mid \operatorname{do}(\x := \x^*)) = \sum_{\z} P(\z \mid \x^*) \sum_{\x} P(\y \mid \z, \x) P(\x) \tag{\textasteriskcentered\textasteriskcentered} \label{eq: fd_formula}.
    \end{align*}
    % when the causal graph of $P$ contains a subset $\Z$ of observed vertices satisfying:
    % \begin{enumerate} 
    %     \item [(1)] $\Z$ blocks all directed paths from $\X$ to $\Y$.
    %     \item [(2)] There are no open back-door paths from $\X$ to $\Z$.
    %     \item [(3)] $\X$ blocks all back-door paths from $\Z$ to $\Y$.
    % \end{enumerate}
    In a DAG $G = (\V \sqcup \U, \E)$, the front-door criterion requires that the sets of variables $\X, \Y, \Z \subset \V$ satisfy:
    \begin{enumerate}
        \item [(1)] $\Z$ blocks all direct paths $\X$ to $\Y$.
        \item [(2)] There are no open back-door paths from $\X$ to $\Z$.
        \item [(3)] $\X$ blocks all back-door paths $\Z$ to $\Y$.
    \end{enumerate}
    The front-door criterion is also incomplete; there exist causal diagrams violating the front-door criterion, yet the front-door functional (\ref{eq: fd_formula}) 
    % $$P(\y \mid \operatorname{do}(\x := \x^*)) = \sum_\z P(\z \mid \x^*) \sum_\x P(\y \mid \z, \x) P(\x)$$ 
    holds (see \Cref{fig: non-examples_fdc}). In this work, we provide the following weakening of the front-door criterion, where the front-door functional remains valid.
    
    \begin{theorem} [Main Theorem] \label{thm: main}
        Let $\X, \Y, \Z \subset \V$ be disjoint. Suppose $\Z$ satisfies the following criterion relative to $(\X, \Y)$:
        \begin{enumerate}
            \item [(i)] There are no open back-door paths from $\X$ to $\Z$.
            \item [(ii)] There are no open, proper front-door paths from \(\X\) to \(\Y\) given \(\Z\).
        \end{enumerate}
        Then, the causal effect is determined by the front-door functional
        $$P(\y \mid \operatorname{do}(\x := \x^*)) = \sum_\z P(\z | \x^*) \sum_\x P(\y | \z,\x)P(\x).$$
    \end{theorem}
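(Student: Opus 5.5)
Our plan is to follow Pearl's classical derivation of the front-door formula with do-calculus, replacing each use of the original conditions (1)--(3) by a use of (i) or (ii). We first decompose the interventional distribution over $\Z$:
\[
P(\y \mid \operatorname{do}(\x^*)) = \sum_\z P(\z \mid \operatorname{do}(\x^*))\, P(\y \mid \z, \operatorname{do}(\x^*)).
\]
Condition (i) makes the first factor straightforward. Since no back-door path from $\X$ to $\Z$ is open, $(\Z \indep \X)$ holds in $G_{\underline{\X}}$. Rule~2 then gives $P(\z \mid \operatorname{do}(\x^*)) = P(\z \mid \x^*)$.

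The second factor should equal $\sum_\x P(\y \mid \z, \x) P(\x)$. We will obtain this in two steps.

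\textbf{Step (a): an intervention on $\Z$ removes the intervention on $\X$,}
\[
P(\y \mid \z, \operatorname{do}(\x^*)) = P(\y \mid \operatorname{do}(\z), \operatorname{do}(\x^*)) = P(\y \mid \operatorname{do}(\z)).
\]
The first equality is Rule~2 applied to $\Z$ in $G_{\overline{\X}}$. It needs $(\Y \indep \Z \mid \X)$ in $G_{\overline{\X}\underline{\Z}}$, which is the step where the classical condition (3) would normally be used.

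The second equality is Rule~3. It needs every path from $\X$ to $\Y$ surviving in $G_{\overline{\Z}\,\overline{\X}}$ to be blocked. Such paths are front-door paths out of $\X$ that do not pass through $\Z$ as non-colliders. We intend to read condition (ii) exactly as the statement that no open, proper (first vertex in $\X$ only) path leaving $\X$ reaches $\Y$ given $\Z$. Under that reading, Rule~3 applies.

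\textbf{Step (b): undo the intervention on $\Z$ by adjusting for $\X$,}
\[
P(\y \mid \operatorname{do}(\z)) = \sum_\x P(\y \mid \z, \x) P(\x).
\]
This is a back-door adjustment with $\X$ as the adjustment set. Here $P(\x \mid \operatorname{do}(\z)) = P(\x)$ follows from condition (i), since $\Z$ cannot be an ancestor of $\X$ in a way that creates an open back-door path. We also need $P(\y \mid \x, \operatorname{do}(\z)) = P(\y \mid \x, \z)$, which is Rule~2 requiring $(\Y \indep \Z \mid \X)$ in $G_{\underline{\Z}}$.

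\textbf{Main obstacle.} The hardest step is establishing $(\Y \indep \Z \mid \X)$ in $G_{\underline{\Z}}$ using only (i) and (ii), since the classical (3) has been dropped. Any open back-door path from $\Z$ to $\Y$ given $\X$ contains a first point at which it leaves or enters $\X$. Our approach is to show that the offending path can be cut and rerouted at that point:
\begin{itemize}
\item If $\X$ appears on it as a collider, or has a descendant opened by conditioning, we would splice in a directed path from $\X$. This should yield either an open back-door path from $\X$ to $\Z$, violating (i), or an open proper front-door path from $\X$ to $\Y$ given $\Z$, violating (ii).
\item If the path avoids $\X$ entirely, we would combine it with a directed edge from $\X$ into $\Z$ (or with the latent-confounding structure) to produce the same contradiction.
\end{itemize}

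This case analysis on the shape of the path near $\X$ and near $\Z$, in particular handling colliders whose descendants lie in $\X\cup\Z$, is where we expect most of the work. We will therefore first try to prove a lemma of the form ``(i) and (ii) imply $(\Y\indep\Z\mid\X)_{G_{\underline{\Z}}}$ and $(\Y\indep\Z\mid\X)_{G_{\overline{\X}\underline{\Z}}}$.'' If the lemma fails as stated, the fallback is a direct truncated-factorization argument: write $P(\y\mid\operatorname{do}(\x^*))$ with the latent variables $\U$ summed out, and use (i) and (ii) to factor the sum into the $\Z$-part and the $\Y$-part.
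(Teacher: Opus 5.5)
There is a genuine gap. The lemma you plan to prove under ``Main obstacle'' is false, and so is the identity $P(\y\mid\z,\operatorname{do}(\x^*))=P(\y\mid\operatorname{do}(\z))$ that your whole route passes through. Conditions (i) and (ii) only control paths that start at $\X$. They say nothing about how members of $\Z$ that are not children of $\X$ are attached to $\Y$.

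Take $X\to Z_1\to Y$ together with $Z_2\dashedleftrightarrow Y$, and $\Z=\{Z_1,Z_2\}$. Here $X$ has no back-door paths, and its only path to $Y$ is blocked at $Z_1$, so (i) and (ii) hold. The front-door formula is also correct: both sides equal $\sum_{z_1}P(z_1\mid x^*)P(y\mid z_1)$. But $Z_2\dashedleftrightarrow Y$ survives in both $G_{\overline{\X}\underline{\Z}}$ and $G_{\underline{\Z}}$. So neither $(\Y\indep\Z\mid\X)_{G_{\overline{\X}\underline{\Z}}}$ nor $(\Y\indep\Z\mid\X)_{G_{\underline{\Z}}}$ holds.

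The equalities themselves fail too: $P(y\mid z_1,z_2,\operatorname{do}(x^*))=P(y\mid z_1,z_2)$, while $P(y\mid\operatorname{do}(z_1,z_2))=P(y\mid z_1)$. Your Steps (a) and (b) are each false here and merely cancel. Using $Y\to Z_2$ in place of $Z_2\dashedleftrightarrow Y$ behaves the same way. The splicing you sketch cannot produce a contradiction, because there is no edge from $\X$ into $Z_2$ to splice in. Your Rule~3 step and the step $P(\x\mid\operatorname{do}(\z))=P(\x)$ are fine.

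The missing idea is to intervene only on the part of $\Z$ where that splicing works. The paper first projects onto $\X\sqcup\Y\sqcup\Z$, so every relevant path has its interior in $\X\cup\Y\cup\Z$. It then sets $\Z^{ch}=\Z\cap Ch(\X)$ and $\Z^{nch}=\Z\setminus\Z^{ch}$, keeps $\Z^{nch}$ as a conditioning set, and derives
\[
P_\X(\y\mid\z)=P_{\X,\Z^{ch}}(\y\mid\z^{nch})=P_{\Z^{ch}}(\y\mid\z^{nch})=\sum_\x P_{\Z^{ch}}(\y\mid\x,\z^{nch})\,P_{\Z^{ch}}(\x\mid\z^{nch}).
\]
Rules 1 and 3 then give $P_{\Z^{ch}}(\x\mid\z^{nch})=P(\x)$, and Rule 2 gives $P_{\Z^{ch}}(\y\mid\x,\z^{nch})=P(\y\mid\x,\z)$.

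The separations now needed are $(\Y\indep\Z^{ch}\mid\X,\Z^{nch})$ in $G_{\overline{\X}\underline{\Z^{ch}}}$ and in $G_{\underline{\Z^{ch}}}$, and these do follow from (i) and (ii):
\begin{enumerate}
\item On a proper open path from $\Z^{ch}$ to $\Y$, every interior node lies in the conditioning set, so it must be a collider.
\item Nodes of $\X$ are excluded: by the mutilation in $G_{\overline{\X}\underline{\Z^{ch}}}$, and by (i) in $G_{\underline{\Z^{ch}}}$.
\item Hence the path is a bidirected chain through $\Z^{nch}$ ending in $\dashedleftrightarrow Y$ or $\leftarrow Y$.
\item Prepending the edge $X\to Z^{ch}$ yields an open proper front-door path, contradicting (ii).
\end{enumerate}
Because $\Z^{nch}$ is conditioned on rather than intervened on, its association with $\Y$ (as with $Z_2$ above) is carried into $P(\y\mid\x,\z)$ instead of being cut.
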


    Subsequent sections are organized as follows. \Cref{sec: preliminaries} discusses notation, assumptions, and background from causal inference, which familiar readers may choose to skip. \Cref{sec: main} presents our main results. The proof of \Cref{thm: main} is given in \Cref{apx: pf_main}. The necessity of Condition (ii) of \Cref{thm: main} is shown in \Cref{apx: pf_cond_ii_necessary}.

    \subsection*{Acknowledgments}
        This work was supported by an NSERC USRA awarded to C. W. under the supervision of E. Robeva, who holds a Canada CIFAR AI Chair and is supported by an NSERC Discovery Grant.

\section{Preliminaries} \label{sec: preliminaries}
    \subsection{Foundations and Problem Statement}
        This subsection establishes the graphical framework for causal inference and defines the central problem addressed in this work. 
        
        We assume familiarity with directed acyclic graphs (DAGs), d-separation, and interventional distributions. Unfamiliar readers may consult Appendix~\ref{apx: prelim prelim}, as the discussion in this subsection is intentionally brief and serves primarily to fix notation. 

        \subsubsection{Notation and Standing Assumptions}
            We use boldface for sets of variables (e.g., $\V, \X$) and standard font (e.g., $V, X$) for individual variables. $\U$ and $U$ refer to latent (unobserved) variables. We use shorthand $P_\X(\Y)$ for the causal effect of $\X$ on $\Y$. 

            \textbf{Standing Assumption.} Throughout, we assume all observed distributions $P(\vv)$ to be \emph{strictly positive}. That is, $P(\vv) > 0$ for all $\vv$ in the domain of $\V$. 
        \subsubsection{Causal Models}
            A \textit{directed acyclic graph (DAG)} is a graph $G = (\V, \E)$ with directed edges $V_1 \to V_2$ and no directed cycles. 

            We consider a DAG
            $G = (\V \sqcup \U, \E)$, where $\V$ denotes observed variables and $\U$ denotes latent variables. The joint distribution $P(\vv,\uu)$ factorizes according to $G$ if
            \[
                P(\vv, \uu) = \prod_{v \in \V \sqcup \U} P\big(v \mid Pa(v)\big).
            \]
            The observed distribution is obtained by marginalization,
            \[
            P(\vv) = \sum_{\uu} P(\vv,\uu).
            \]
        \subsubsection{Interventions}
            Let $\X, \Y \subseteq \V$. The \textit{interventional distribution} $\mathrm{do}(\X=\x^*)$ is defined by the truncated factorization
            \[
            P(\vv \mid \mathrm{do}(\X=\x^*)) 
            = \sum_{\uu}\prod_{v \in (\V \sqcup \U) \setminus \X} P(v \mid Pa(v)) \cdot \mathbbm{1}_{\x=\x^*}.
            \]

            The \textit{causal effect} of $\X$ on $\Y$ is given by the marginal
            \[
            P_\X(\y) := P(\y \mid \mathrm{do}(\X=\x^*)),
            \]
            and we will use this notation throughout.

        \subsubsection{The Identification Problem}
            A central question in causal inference is whether an interventional distribution $P_\X(\y)$ can be uniquely determined from the observational distribution $P(\vv)$ under a given graph. In the absence of latent confounding, the truncated factorization above provides an explicit expression for $P_\X(\y)$. However, when unobserved variables are present, such a representation may fail to exist. This leads to the definition of identifiability.
            
            \begin{definition} [Identifiability] \label{def: identifiability}
                Let $G = (\V \sqcup \U, \E)$ be a DAG. Let $\X, \Y \subseteq \V$ be disjoint.  
        
                The causal effect $P_\X(\y)$ is said to be \textit{identifiable} in $G$ if there exists a functional $f$ such that for any joint distribution $P(\vv, \uu)$ that factorizes according to $G$,
                \[
                P_\X(\y) = f\big(P(\vv)\big).
                \]
                
                Equivalently, $P_\X(\y)$ is uniquely determined by the observed distribution $P(\vv)$ and does not depend on the distribution of the unobserved variables $\U$.
            \end{definition}

            \begin{example} \label{ex: identifiability}
                % https://q.uiver.app/#q=WzAsOCxbMywwLCJYIl0sWzUsMCwiWSJdLFs0LDAsIloiXSxbMSwwLCJYIl0sWzIsMCwiWSJdLFs0LDEsIlxcdGV4dHsoYil9Il0sWzAsMCwiWiJdLFsxLDEsIihhKSJdLFswLDJdLFsyLDFdLFszLDRdLFszLDQsIlUiLDEseyJjdXJ2ZSI6LTMsInN0eWxlIjp7InRhaWwiOnsibmFtZSI6ImFycm93aGVhZCJ9LCJib2R5Ijp7Im5hbWUiOiJkYXNoZWQifX19XSxbMSwwLCJVIiwxLHsiY3VydmUiOjQsInN0eWxlIjp7InRhaWwiOnsibmFtZSI6ImFycm93aGVhZCJ9LCJib2R5Ijp7Im5hbWUiOiJkYXNoZWQifX19XSxbNiwzXV0=
                \[\begin{tikzcd}[ampersand replacement=\&]
                    Z \& X \& Y \& X \& Z \& Y \\
                    \& {(a)} \&\&\& {\text{(b)}}
                    \arrow[from=1-1, to=1-2]
                    \arrow[from=1-2, to=1-3]
                    \arrow["U"{description}, curve={height=-18pt}, dashed, tail reversed, from=1-2, to=1-3]
                    \arrow[from=1-4, to=1-5]
                    \arrow[from=1-5, to=1-6]
                    \arrow["U"{description}, curve={height=24pt}, dashed, tail reversed, from=1-6, to=1-4]
                \end{tikzcd}\]
                \captionof{figure}{} \label{fig: identifiability}

                \begin{enumerate}
                    \item [(a)] $P_X(y)$ is not identifiable. 
                    \item [(b)] $P_X(y) = \sum_z P(z \mid x^*) \sum_x P(y \mid z, x) P(x)$ (see \Cref{thm: fdc}).
                \end{enumerate}
            \end{example}
        \subsubsection{Identification Criteria}
            While complete identification algorithms exist for fully specified DAGs \cite{SHPITSER10, TIKKA17}, they are often impractical, as such detailed knowledge of a system's DAG is rarely available in practice. This motivates \textit{identification criteria}\textemdash graphical conditions which guarantee that a specific identifying functional, such as the one in Example~\ref{ex: identifiability}, is valid\textemdash which enables identification of causal effects under substantially coarser assumptions.

            The first of two main criteria is the back-door criterion. It formalizes the idea that the discrepancy between the causal effect $P_x(y)$ and the conditional distribution $P(y \mid x)$ is due to associations through back-door paths. If a set $\Z$ can block all such paths, the causal effect becomes identifiable.

            \begin{definition} [Back-door and Front-door Paths]
                A \textit{front-door} path from $X$ is one where the first edge points away from $X$. That is, $\pi: X \rightarrow \cdots$.
                
                All other paths are \textit{back-door} paths.
            \end{definition}

            \begin{definition} [Back-door Criterion (BDC)] \label{thm: bdc}
                Let $G = (\V \sqcup \U, \E)$. Let $\X, \Y, \Z \subset \V$ be disjoint. $\Z$ satisfies the \textit{back-door criterion} relative to $(\X, \Y)$ if 
                \begin{enumerate}
                    \item [(1)] $\Z$ blocks all back-door paths $\X$ to $\Y$.
                    \item [(2)] $\Z$ is not a descendant of $\X$.
                \end{enumerate}
                Then, $P_\X(\y)$ is identifiable by the covariate adjustment formula:
                \begin{align*}
                    P_\X(\y) = \sum_\z P(\y \mid \z, \x^*) P(\z) \tag{\textasteriskcentered}.
                \end{align*}
            \end{definition}
            However, the back-door criterion fails in the presence of unobserved confounding between $\X$ and $\Y$ (see \Cref{fig: identifiability}(a)). In these settings, the front-door criterion provides an alternative identification strategy exploiting the decomposition of the causal effect $\X \to \Y$ through a mediator $\Z$.
        \begin{theorem} [Pearl's Front-door Criterion (FDC)] \label{thm: fdc}
            Let $\X, \Y, \Z \subset \V$ be disjoint sets in $\V$. $\Z$ satisfies the \textit{front-door criterion} relative to $(\X, \Y)$ if 
            \begin{enumerate}
                \item [(1)] $\Z$ blocks all direct paths $\X$ to $\Y$.
                \item [(2)] There are no open back-door paths from $\X$ to $\Z$.
                \item [(3)] $\X$ blocks all back-door paths $\Z$ to $\Y$.
            \end{enumerate}
            Then, $P_\X(\y)$ is identified by the front-door functional:
            \[ P_\X(\y) = \sum_\z P(\z \mid \x^*) \sum_\x P(\y \mid \x, \z) P(\x).\]
        \end{theorem}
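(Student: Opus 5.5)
The plan is to derive the front-door functional from Pearl's do-calculus, using the three conditions of \Cref{thm: fdc} exactly as they enter the three rule applications. I will assume the soundness of the do-calculus rules, with $G_{\overline{\X}}$ denoting $G$ with edges into $\X$ removed and $G_{\underline{\X}}$ denoting $G$ with edges out of $\X$ removed. The decomposition I aim for is
\[
P_\X(\y) = \sum_\z P_\X(\z)\, P_{\X}(\y \mid \z),
\]
together with three identities:
\[
P_\X(\z) = P(\z \mid \x^*), \qquad P_\X(\y \mid \z) = P_{\X,\Z}(\y), \qquad P_{\X,\Z}(\y) = P_\Z(\y) = \sum_\x P(\y \mid \z, \x) P(\x).
\]
The first equality is simply the law of total probability in the interventional distribution.

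\textbf{First identity.} By condition (2), every back-door path from $\X$ to $\Z$ is blocked. Hence $\Z \indep \X$ in $G_{\underline{\X}}$, and Rule 2 gives $P_\X(\z) = P(\z \mid \x^*)$.

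\textbf{Second identity.} I use Rule 2 with $\Z$ as the action, which requires $\Y \indep \Z \mid \X$ in $G_{\overline{\X}\underline{\Z}}$.
\begin{itemize}
\item In that graph, any path from $\Z$ to $\Y$ must begin with an edge into $\Z$, so it is a back-door path from $\Z$.
\item Condition (3) says $\X$ blocks all such paths in $G$.
\item Removing edges only destroys paths, so the blocking persists.
\end{itemize}
One subtlety needs a check: conditioning on $\X$ in $G_{\overline{\X}}$ cannot open a collider at $\X$, because $\X$ has no incoming edges there.

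\textbf{Third identity, first equality.} Here I remove the intervention on $\X$ via Rule 3, which needs $\Y \indep \X \mid \Z$ in $G_{\overline{\Z}\,\overline{\X(\Z)}}$. Since $\X$ is an ancestor of nothing outside through non-$\Z$ routes, this reduces to $\Y \indep \X$ in $G_{\overline{\Z}\,\overline{\X}}$. A path from $\X$ to $\Y$ in that graph either
\begin{itemize}
\item leaves $\X$ forward, in which case it is a directed path into $\Y$ avoiding $\Z$ (forbidden by condition (1)) or it contains a collider and is blocked; or
\item is a back-door path, which is impossible because no edges enter $\X$.
\end{itemize}

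\textbf{Third identity, second equality.} Finally, $P_\Z(\y) = \sum_\x P(\y \mid \z, \x) P(\x)$ is back-door adjustment for the effect of $\Z$ on $\Y$ with adjustment set $\X$, i.e. \Cref{thm: bdc} applied to the pair $(\Z, \Y)$.
\begin{itemize}
\item Condition (3) gives that $\X$ blocks all back-door paths from $\Z$ to $\Y$.
\item The non-descendant requirement holds because $\X$ is not a descendant of $\Z$. If some $Z$ were an ancestor of some $X$, the directed path $X \leftarrow \cdots \leftarrow Z$ read from $X$ would be a back-door path from $X$ to $Z$ containing no colliders, hence open, contradicting condition (2).
\end{itemize}

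\textbf{Assembly and main obstacle.} Substituting the three identities gives the stated formula. The main obstacle I expect is the Rule 3 step, i.e. carefully verifying the graph-surgery d-separation together with the set $\X(\Z)$ bookkeeping. If that proves awkward, I would instead prove $P_{\X,\Z}(\y) = P_\Z(\y)$ directly from the truncated factorization: under $\operatorname{do}(\Z)$, with condition (1) in force, $\X$ has no directed path to $\Y$, so intervening on it leaves the marginal of $\Y$ unchanged. Strict positivity ensures every conditional appearing above is well defined.
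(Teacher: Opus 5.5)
Your argument is correct. It is Pearl's classical do-calculus derivation of the front-door criterion, and it takes a different route from the paper.

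The paper never proves this statement directly. It quotes it as background and recovers it as a special case of its Main Theorem: condition (2) is condition (i), and (1) and (3) together imply (ii).

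The paper's proof of the Main Theorem shares your skeleton: marginalize over $\z$, apply Rule 2 for $P_\X(\z)$, trade conditioning on mediators for intervention by Rule 2, drop $\operatorname{do}(\X)$ by Rule 3, then adjust for $\x$. It differs from yours in three ways:
\begin{enumerate}
\item It first passes to the latent projection onto $\X \sqcup \Y \sqcup \Z$.
\item It intervenes only on $\Z^{ch} = \Z \cap Ch(\X)$ and keeps $\Z^{nch}$ in the conditioning set throughout.
\item It derives the final adjustment by explicit applications of Rules 1, 3 and 2, instead of invoking the back-door criterion.
\end{enumerate}

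That split is what lets condition (3) be dropped. Under (ii) alone, $\X$ need not block back-door paths from $\Z$ to $\Y$. For example, take $X \to Z_1 \to Y$ plus a non-child $Z_2 \in \Z$ with $Z_2 \dashedleftrightarrow Y$. There, both your Rule 2 step $P_\X(\y \mid \z) = P_{\X,\Z}(\y)$ and your back-door step fail individually. The paper's route survives because it conditions on $Z_2$ rather than intervening on it. So your route is shorter and shows clearly where each of (1), (2), (3) is used, while the paper's route buys the weaker hypothesis.

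One small repair in your Rule 3 step. There is no conditioning set $\W$, so the side set is $\X(\varnothing) = \X$. The required condition is therefore $(\Y \indep \X \mid \Z)_{G_{\overline{\Z}\,\overline{\X}}}$ directly. Passing from this to the unconditional separation you actually verify is legitimate, but state the right reason. The nodes of $\Z$ have no parents in $G_{\overline{\Z}\,\overline{\X}}$, so conditioning on $\Z$ can only block paths and never opens a collider. Your remark about ancestors ``through non-$\Z$ routes'' is not that reason.
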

        
        It should be noted that both the BDC and FDC are sufficient but not necessary for identification via their respective functionals (see \cite{SHPITSER10} and \Cref{fig: non-examples_fdc} for examples). The incompleteness of the back-door criterion was resolved by Shpitser \cite{SHPITSER10}:
            \begin{definition} [Covariate Adjustment Criterion] \label{thm: gen bdc}
                $\Z$ satisfies the adjustment criterion relative to $(\X,\Y)$ in $G$ if
                \begin{enumerate}
                    \item [(1)] No element in $\Z$ is a descendant of any $W \not \in \X$ which lies on a direct path from $\X$ to $\Y$. I.e., there is no subgraph of the form
                    $$\begin{array}{ccccccccc}
                            X & \rightarrow & \hdots & \rightarrow & W & \rightarrow & \hdots & \rightarrow & Y \\
                            & & & & & \searrow & & & \\
                            & & & & & & \ddots & & \\
                            & & & & & & & \searrow & \\
                            & & & & & & & & Z \\
                    \end{array}$$
                    \item [(2)] All non-direct paths from $\X$ to $\Y$ are blocked by $\Z$.
                \end{enumerate}
                Covariate adjustment
                \begin{align*}
                    P_\X(\y) = \sum_\z P(\y \mid \z, \x^*) P(\z) \tag{\textasteriskcentered}.
                \end{align*}
                holds if and only if $\Z$ satisfies the adjustment criterion relative to $(\X,\Y)$ in $G$.
            \end{definition}
            
            \begin{remark}
                Covariate adjustment has also been studied in other causal models. A complete criterion was provided by Perkovi\'{c} et al.\ (2015) \cite{PERKOVIC2017} for directed acyclic graphs (DAGs), maximal ancestral graphs (MAGs), completed partially directed acyclic graphs (CPDAGs), and partial ancestral graphs (PAGs).
            \end{remark}
            Motivated by this complete characterization for the back-door case, we seek a generalization of the front-door criterion for DAGs. Our work relaxes the assumptions under which the front-door formula remains valid, providing a broader sufficient condition for the front-door functional. 
    \subsection{Technical Background and Tools}
        We now introduce the key technical tools used in our analysis: do-calculus, latent projections, and mixed graphical representations.  
        \subsubsection{Do-calculus}\label{sec: do-calc}
            Our primary tool for establishing identifiability will be Pearl's \textit{do-calculus} \cite{PEARL1995a}, a set of inference rules for manipulating interventional distributions $P_\X(\y)$ by checking d-separation conditions of subgraphs.
            
            \begin{theorem} [Rules of Do-Calculus] \label{thm: do-calc}
                Let $G = (\V \sqcup \U, \E)$, and let $\X, \Y, \Z, \W \subset \V$ be disjoint. Then, 
                \begin{enumerate}
                    \item [(1)] $P_\X(\y \mid \z, \w) = P_\X(\y \mid \w)$ if $(\Y \indep \Z \mid \W, \X)_{G_{\overline{\X}}}$
                    \item [(2)] $P_{\X,\Z}(\y\mid \w) = P_\X(\y \mid \z,\w)$ if $(\Y \indep \Z \mid \X,\W)_{G_{\overline{\X},\underline{\Z}}}$
                    \item [(3)] $P_{\X,\Z}(\y\mid \w) = P_{\X}(\y\mid \w) $ if $ (\Y \indep \Z \mid \X,\W)_{G_{\overline{\X}, \overline{\Z(\W)}}}$
                \end{enumerate}
                where $\Z(\W) = \Z \backslash An(\W)_{G_{\overline{\X}}}$ and ${G_{\overline{\X},\underline{\Z}}}$ is the graph obtained from $G$ by removing all incoming arrows to $\X$ and all outgoing arrows from $\Z$.
            \end{theorem}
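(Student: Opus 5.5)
The plan is to prove the three rules via Pearl's ``functional augmentation'' of $G$. Each rule is reduced to a single d-separation statement in an augmented graph, and the global Markov property of d-separation then gives the corresponding conditional independence. Concretely, for each $Z \in \Z$ add a new parentless node $F_Z$ with an edge $F_Z \to Z$. The variable $F_Z$ takes the value ``idle'' or ``$\operatorname{do}(z)$''. We set $P(z \mid Pa(z), F_Z=\text{idle}) = P(z\mid Pa(z))$ and $P(z\mid Pa(z), F_Z = \operatorname{do}(z')) = \mathbbm{1}_{z=z'}$. The intervention on $\X$ is handled directly by the truncated factorization: $P_\X(\vv,\uu)$ factorizes according to $G_{\overline{\X}}$ with $\X$ clamped to constants. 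This distribution remains Markov with respect to $G_{\overline{\X}}$, and the augmented joint is Markov with respect to the augmented $G_{\overline{\X}}$, which I denote $G^F_{\overline{\X}}$.

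\textbf{Rule 1} is then immediate. Since $P_\X$ is Markov relative to $G_{\overline{\X}}$, the separation $(\Y \indep \Z \mid \W,\X)_{G_{\overline{\X}}}$ yields $P_\X(\y\mid \z,\w)=P_\X(\y\mid\w)$. Strict positivity of $P(\vv)$ guarantees that all the conditionals involved are well defined.

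\textbf{Rule 2.} In the augmented model we have $P_{\X,\Z}(\y\mid\w) = P_\X(\y\mid \z,\w,F_\Z=\operatorname{do}(\z))$ and $P_\X(\y\mid\z,\w)=P_\X(\y\mid\z,\w,F_\Z=\text{idle})$. It therefore suffices to show $(\Y\indep F_\Z \mid \X,\Z,\W)$ in $G^F_{\overline{\X}}$. I argue by contrapositive: take a d-connecting path $F_{Z_1}\to Z_1 \cdots Y$. Since $Z_1$ is conditioned on, it must be a collider on the path, so the next edge points into $Z_1$. Now cut the path at the \emph{last} element of $\Z$ it visits, say $Z_k$; every $Z$ on the path is a collider for the same reason. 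The tail from $Z_k$ to $Y$ begins with an edge into $Z_k$ and uses no outgoing edge of any $Z$. Any collider on this tail is opened by a descendant in $\X\cup\Z\cup\W$. That directed path can be truncated at the first $\Z$ node it meets, so it avoids outgoing $\Z$-edges. Hence the tail is d-connecting in $G_{\overline{\X},\underline{\Z}}$, contradicting the hypothesis.

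\textbf{Rule 3.} Here $P_{\X,\Z}(\y\mid\w)=P_\X(\y\mid\w,F_\Z=\operatorname{do}(\z))$ and $P_\X(\y\mid\w)=P_\X(\y\mid\w,F_\Z=\text{idle})$, with $\Z$ now marginalized. So we need $(\Y\indep F_\Z\mid \X,\W)$ in $G^F_{\overline{\X}}$. First, for $Z\in\Z\cap An(\W)_{G_{\overline{\X}}}$, I would show that the node $F_Z$ can be dropped. Then, for $Z\in \Z(\W)$, the edge $F_Z\to Z$ behaves like the incoming arrows that $G_{\overline{\Z(\W)}}$ deletes. Take a shortest d-connecting path from some $F_{Z}$ to $Y$ given $\X,\W$. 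If $Z\in\Z(\W)$, then $Z$ cannot be an open collider, because it has no descendant in $\W$. So the path leaves $Z$ along an outgoing edge. I then locate the last $\Z(\W)$ node $Z'$ on the path. Either $Z'$ is left outward, which gives a d-connecting path $Z'\to\cdots Y$ in $G_{\overline{\X},\overline{\Z(\W)}}$; or $Z'$ is entered through an arrowhead, in which case $Z'$ is a collider needing a $\W$-descendant, which is impossible.

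I expect the main obstacle to be Rule 3, specifically the nodes of $\Z\cap An(\W)$. For these, paths may pass through $Z$ as an open collider, and I must check that collider activations in $G$ survive the deletion of edges into $\Z(\W)$. The key fact to use is that a descendant path from a collider to $\W$ never needs to pass through a node of $\Z(\W)$: such a node is not an ancestor of $\W$, so it cannot lie on a directed path into $\W$. I would write this step out most carefully, using a minimal-path argument to rule out the remaining cases.
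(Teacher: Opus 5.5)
The paper does not prove this statement; it quotes it from Pearl (1995). Your $F_Z$-augmentation route is Pearl's own, and your probabilistic reductions are correct: Rule 1 from the Markov property of $P_\X$ relative to $G_{\overline{\X}}$, and Rules 2 and 3 from $\Y \indep F_\Z$ in $G^F_{\overline{\X}}$. The gaps are in the two graph-translation steps, which carry the real content.

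\textbf{Rule 2.} Your tail $\tau$, from the last $\Z$-node $Z_k$ to $Y$, is d-connecting in $G_{\overline{\X},\underline{\Z}}$ given $\X\cup\Z\cup\W$. The hypothesis of Rule 2, however, conditions only on $\X\cup\W$. A collider on $\tau$ whose only activating descendant lies in $\Z$ therefore blocks $\tau$.

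For instance, take $A\to Z_1$, $A\to C\leftarrow Y$, $C\to Z_2$, with $\X=\W=\varnothing$. The path $F_{Z_1}\to Z_1\leftarrow A\to C\leftarrow Y$ is open given $\Z$. Yet your tail $Z_1\leftarrow A\to C\leftarrow Y$ is blocked at $C$, and the actual witness is $Z_2\leftarrow C\leftarrow Y$.

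The missing step is a rerouting:
\begin{enumerate}
\item Let $C$ be the collider on $\tau$ closest to $Y$ that is not an ancestor of $\X\cup\W$ in $G_{\overline{\X},\underline{\Z}}$.
\item Take its activating directed path $\delta$, truncated at the first node of $\X\cup\Z\cup\W$. This path lies in $G_{\overline{\X},\underline{\Z}}$, avoids $\X\cup\W$, and ends at some $Z'\in\Z$.
\item Let $D$ be the node of $\delta$ nearest $Z'$ that lies on $\tau[C,Y]$, and splice: take $Z'\leftarrow\cdots\leftarrow D$ followed by $\tau[D,Y]$.
\end{enumerate}
On this spliced path, $D$ is a non-collider outside $\X\cup\W$. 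Every collider past $D$ is an ancestor of $\X\cup\W$ by the choice of $C$. So the path is d-connecting given $\X\cup\W$ in $G_{\overline{\X},\underline{\Z}}$.

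\textbf{Rule 3.} Your dichotomy for the last $\Z(\W)$-node $Z'$ is not exhaustive. If the $Y$-side edge points into $Z'$, then $Z'$ need not be a collider. It can sit in a chain $\cdots\leftarrow Z'\leftarrow\cdots$ oriented toward $F_Z$, and that $Y$-side edge is deleted in $G_{\overline{\X},\overline{\Z(\W)}}$.

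This case needs its own argument. Since $\pi$ begins $F_Z\to Z$, walking from $Z'$ back toward $F_Z$ must reach a node where the orientation flips. That node is a collider $C$ of $\pi$ (possibly $C=Z$), with $Z'\to\cdots\to C$. Open colliders lie in $An(\W)_{G_{\overline{\X}}}$, since they cannot reach $\X$ in $G_{\overline{\X}}$. So $Z'\in An(\W)$, contradicting $Z'\in\Z(\W)$.

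Hence every $\Z(\W)$-node on $\pi$ is traversed as $\to Z'\to$. Take the subpath to $Y$ from the last such node, or from $Z$ itself if there is none. It is d-connecting given $\X\cup\W$ in $G_{\overline{\X},\overline{\Z(\W)}}$, by your correct observation that descendant paths to $\W$ avoid $\Z(\W)$.

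This uniform argument also handles $Z\in\Z\cap An(\W)$. Your plan to ``drop'' $F_Z$ for such $Z$ is not justified: $\Y\indep F_Z$ must still hold for them. The paths $F_Z\to Z\leftarrow\cdots Y$, passing through $Z$ as an open collider, are exactly what the retained incoming edges of $Z$ in $G_{\overline{\X},\overline{\Z(\W)}}$ account for.
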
 

        Do-calculus is \textit{complete} for identifying interventional distributions in DAGs \cite{HUANG2006b} and ADMGs (see Definition~\ref{def: admg}) \cite{HUANG2006a}: if $P_\X(\y)$ is identifiable, there exists a finite sequence of do-calculus rules and standard probability manipulations that reduces it to an expression involving only observational (intervention-free) probabilities. This completeness underpins the Shpitser–Pearl ID algorithm, a sound and complete identification algorithm for ADMGs \cite{SHPITSER08}. This algorithm has been implemented in \textsf{R} \cite{TIKKA17}, and has been useful for verifying identifiability results.

        \subsubsection{Latent Projection}\label{apx: latent_projection}
            Working directly with DAGs containing many latent variables can be cumbersome. The operation of latent projection condenses such a DAG into a simpler mixed graph that preserves all relevant independence and identifiability relations among the observed variables.

            The construction of a latent projection was introduced by Verma \cite[Section~3.1.3]{VERMA1993}, but we will reference a graphically clearer formulation given by Tian and Pearl \cite[Definition~1]{TIAN2002}. 

            \begin{definition} [Direct Path]
                A path from $X$ to $Y$ is \textit{direct} if all edges along $\pi$ are directed and point away from the first node and towards the last node. I.e. $\pi: X \rightarrow \cdots \rightarrow Y$.
            \end{definition}

            \begin{definition} [Trek]
                A \emph{trek} from $X$ to $Y$ in a DAG is a path consisting of two direct paths with a common source $V$, one ending at $X$ and one at $Y$: $$\pi: X \leftarrow \cdots \leftarrow V \rightarrow \cdots \rightarrow Y.$$
            \end{definition}

            \begin{definition}[Acyclic Directed Mixed Graph (ADMG)]\label{def: admg}
                An \textit{acyclic directed mixed graph (ADMG)} is a graph $G = (\V, \E)$ containing directed edges $V_1 \to V_2$ and bidirected edges $V_1 \dashedleftrightarrow V_2$, with no directed cycles. A bidirected edge $V_1 \dashedleftrightarrow V_2$ represents the presence of an unobserved confounder $U$ with $V_1 \dashleftarrow U \dashrightarrow V_2$. 
            \end{definition}
            
            \begin{definition}[Latent Projection] \label{def: latent_projection}
                Let \( G = (\V \sqcup \U, \E) \) be a DAG. The \textit{latent projection} \( Pj(G, \V) \) of \( G \) onto \( \V \) is the ADMG obtained by:
                \begin{enumerate} 
                    \item Removing all latent nodes $\U$ and their adjacent edges.
                    \item Adding a bidirected edge $V_1 \dashedleftrightarrow V_2$ for every trek in $G$ from $V_1$ to $V_2$ whose interior nodes lie entirely in $\U$ (if not already present).
                    \item Adding a directed edge $V_1 \to V_2$ for every direct path from $V_1$ to $V_2$ whose interior nodes lie entirely in $\U$ (if not already present).
                \end{enumerate}
                Any edge added in (2) or (3) is said to be \textit{induced} by the corresponding trek or direct path.
                
                Paths in \(G\) project naturally to paths in \(Pj(G, \V)\) by replacing latent segments with the induced edges.
            \end{definition}
                
            \begin{example}[Latent Projection]
                % https://q.uiver.app/#q=WzAsMTQsWzAsMSwiWCJdLFsxLDEsIlpfMSJdLFsyLDEsIlpfMiJdLFszLDEsIlVfMSJdLFs0LDEsIlkiXSxbMSwyLCJVXzQiXSxbMiwwLCJVXzMiXSxbMiwzLCJVXzUiXSxbMiw0LCJcXHRleHR7KGEpfSJdLFs2LDQsIlxcdGV4dHsoYil9Il0sWzUsMSwiWCJdLFs2LDEsIlpfMSJdLFs3LDEsIlpfMiJdLFs4LDEsIlkiXSxbMCwxXSxbMSwyXSxbMiwzLCIiLDAseyJzdHlsZSI6eyJib2R5Ijp7Im5hbWUiOiJkYXNoZWQifX19XSxbMyw0LCIiLDAseyJzdHlsZSI6eyJib2R5Ijp7Im5hbWUiOiJkYXNoZWQifX19XSxbNSwwLCIiLDAseyJzdHlsZSI6eyJib2R5Ijp7Im5hbWUiOiJkYXNoZWQifX19XSxbNSwxLCIiLDAseyJzdHlsZSI6eyJib2R5Ijp7Im5hbWUiOiJkYXNoZWQifX19XSxbNiwxLCIiLDAseyJzdHlsZSI6eyJib2R5Ijp7Im5hbWUiOiJkYXNoZWQifX19XSxbNiw0LCIiLDAseyJzdHlsZSI6eyJib2R5Ijp7Im5hbWUiOiJkYXNoZWQifX19XSxbMiwzLCJVMiIsMSx7Im9mZnNldCI6LTEsImN1cnZlIjozLCJzdHlsZSI6eyJ0YWlsIjp7Im5hbWUiOiJhcnJvd2hlYWQifSwiYm9keSI6eyJuYW1lIjoiZGFzaGVkIn19fV0sWzcsNSwiIiwwLHsic3R5bGUiOnsiYm9keSI6eyJuYW1lIjoiZGFzaGVkIn19fV0sWzcsNCwiIiwwLHsic3R5bGUiOnsiYm9keSI6eyJuYW1lIjoiZGFzaGVkIn19fV0sWzEwLDExXSxbMTIsMTMsIiIsMCx7ImN1cnZlIjotMiwic3R5bGUiOnsidGFpbCI6eyJuYW1lIjoiYXJyb3doZWFkIn0sImJvZHkiOnsibmFtZSI6ImRhc2hlZCJ9fX1dLFsxMCwxMSwiIiwwLHsib2Zmc2V0IjotMSwiY3VydmUiOjIsInN0eWxlIjp7InRhaWwiOnsibmFtZSI6ImFycm93aGVhZCJ9LCJib2R5Ijp7Im5hbWUiOiJkYXNoZWQifX19XSxbMTAsMTMsIiIsMix7ImN1cnZlIjo1LCJzdHlsZSI6eyJ0YWlsIjp7Im5hbWUiOiJhcnJvd2hlYWQifSwiYm9keSI6eyJuYW1lIjoiZGFzaGVkIn19fV0sWzExLDEzLCIiLDIseyJvZmZzZXQiOi0xLCJjdXJ2ZSI6Mywic3R5bGUiOnsidGFpbCI6eyJuYW1lIjoiYXJyb3doZWFkIn0sImJvZHkiOnsibmFtZSI6ImRhc2hlZCJ9fX1dLFsxMiwxM10sWzExLDEyXV0=
                \[\begin{tikzcd}[ampersand replacement=\&]
                    \&\& {U_3} \\
                    X \& {Z_1} \& {Z_2} \& {U_1} \& Y \& X \& {Z_1} \& {Z_2} \& Y \\
                    \& {U_4} \\
                    \&\& {U_5} \\
                    \&\& {\text{(a)}} \&\&\&\& {\text{(b)}}
                    \arrow[dashed, from=1-3, to=2-2]
                    \arrow[dashed, from=1-3, to=2-5]
                    \arrow[from=2-1, to=2-2]
                    \arrow[from=2-2, to=2-3]
                    \arrow[dashed, from=2-3, to=2-4]
                    \arrow["U2"{description}, shift left, curve={height=18pt}, dashed, tail reversed, from=2-3, to=2-4]
                    \arrow[dashed, from=2-4, to=2-5]
                    \arrow[from=2-6, to=2-7]
                    \arrow[shift left, curve={height=12pt}, dashed, tail reversed, from=2-6, to=2-7]
                    \arrow[curve={height=30pt}, dashed, tail reversed, from=2-6, to=2-9]
                    \arrow[from=2-7, to=2-8]
                    \arrow[shift left, curve={height=18pt}, dashed, tail reversed, from=2-7, to=2-9]
                    \arrow[curve={height=-12pt}, dashed, tail reversed, from=2-8, to=2-9]
                    \arrow[from=2-8, to=2-9]
                    \arrow[dashed, from=3-2, to=2-1]
                    \arrow[dashed, from=3-2, to=2-2]
                    \arrow[dashed, from=4-3, to=2-5]
                    \arrow[dashed, from=4-3, to=3-2]
                \end{tikzcd}\]
                \captionof{figure}{(a) $G = (\V \sqcup \U, \E)$. (b) $Pj(G, \V)$.} \label{fig: latent_projection}
                
                \vspace{6.8pt}

                Above, the trek $X \dashleftarrow U_4 \dashleftarrow U_5 \dashrightarrow Y$ in $G$ induces $X \dashedleftrightarrow Y$ in $Pj(G, \V)$. The directed path $Z_2 \dashrightarrow U_1 \dashrightarrow Y$ induces the directed edge $Z_2 \rightarrow Y$. The path $\pi: X \dashrightarrow U_4 \dashrightarrow Z_1 \rightarrow Z_2 \dashrightarrow U_1 \dashrightarrow Y$ projects to the path $\pi': X \dashedleftrightarrow Z_1 \rightarrow Z_2 \rightarrow Y$.

            \end{example}
            \begin{lemma}[Properties of Latent Projection]\label{lem: latent_proj_properties}
                Let $G$ be a DAG and define \(G' := Pj(G, \V)\). Then,
                \begin{enumerate}
                    \item Paths in \(G\) correspond to projected paths in \(G'\) with the same sequence of observed vertices and arrow directions. 
                    \item All d-separation and d-connectedness relations amongst the variables $\V$ are preserved \cite{VERMA1993}.
                    \item A causal effect \(P_{\X}(\Y)\) is identifiable in \(G\) if and only if it is identifiable in \(G'\), with the same functional expression \cite{HUANG2006c}.
                \end{enumerate}
            \end{lemma}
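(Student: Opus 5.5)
The plan is to prove the three items in order, with each one feeding into the next. The main tool is the explicit construction in \Cref{def: latent_projection}. For (1), take a path $\pi$ in $G$ and list its observed vertices $V_1,\dots,V_k$ in order. Each maximal segment of $\pi$ between consecutive observed vertices $V_i,V_{i+1}$ has all interior nodes in $\U$. Since $G$ is a DAG, such a segment, viewed as a path, is of one of three kinds:
\begin{itemize}
\item it contains a collider at a latent node;
\item it is a direct path, which induces $V_i\to V_{i+1}$ or $V_{i+1}\to V_i$;
\item it is a trek with latent source, which induces $V_i\dashedleftrightarrow V_{i+1}$.
\end{itemize}
The endpoint marks of the induced edge agree with the first and last edges of the segment. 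Replacing each non-collider segment by its induced edge therefore gives a path in $G'$ with the same observed vertex sequence and the same arrowheads at every observed vertex. Segments containing a latent collider are never d-connecting under conditioning on observed sets, since latent nodes and their descendants among latent nodes are never conditioned on. So the paths that matter for the next step are exactly those whose latent segments are treks or direct paths.

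For (2), fix disjoint $\mathbf{A},\mathbf{B},\mathbf{C}\subseteq\V$ and show that $\mathbf{A}$ and $\mathbf{B}$ are d-connected given $\mathbf{C}$ in $G$ if and only if they are in $G'$.
\begin{itemize}
\item \emph{Forward direction.} Take a d-connecting path in $G$. Latent colliders cannot occur on it unless they have an observed descendant in $\mathbf{C}$. To handle this, first use the standard rerouting argument: replace a collider $U$ by the directed path from $U$ to its conditioned descendant and back. This yields a d-connecting path whose colliders all lie in $An(\mathbf{C})$. Then project by (1). Collider status is preserved at observed vertices. Ancestry of observed vertices is preserved, because a directed path through latents induces a directed edge. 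Hence the projected path is d-connecting in $G'$.
\item \emph{Reverse direction.} Lift each edge of a d-connecting path in $G'$ to its inducing latent trek or direct path. The result is a walk whose latent interior vertices are non-colliders and hence unconditioned. Observed vertices keep their collider status and ancestry.
\end{itemize}
The standard walk-to-path lemma for d-connection then finishes the argument. This is Verma's result, and I would cite \cite{VERMA1993} for the details rather than redo the walk bookkeeping.

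For (3), the expected main obstacle, I would pass through do-calculus.
\begin{itemize}
\item First check that latent projection commutes with the graph surgeries in \Cref{thm: do-calc}: for observed $\X,\Z$, $Pj(G_{\overline{\X},\underline{\Z}},\V)=Pj(G,\V)_{\overline{\X},\underline{\Z}}$. Every induced edge into $X\in\X$ ends with an edge into $X$ in $G$, and conversely. The same holds for edges out of $\Z$, and the set $\Z\setminus An(\W)$ is unchanged by (2) applied to ancestral relations.
\item Combined with (2), this shows that every do-calculus rule applies in $G$ if and only if it applies in $G'$. Hence any derivation of $P_\X(\y)$ in one graph is a derivation in the other, with the same final functional.
\item Completeness of do-calculus for DAGs and ADMGs \cite{HUANG2006b,HUANG2006a} converts this into the identifiability equivalence.
\end{itemize}
The delicate point is that identifiability in $G'$ quantifies over models compatible with the ADMG. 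This requires a semantics for $G'$, namely that each bidirected edge is a latent parent, to match the model class of $G$. I would handle that by citing \cite{HUANG2006c} for the semantic direction, and use the do-calculus argument only to justify that the functional expression is the same.
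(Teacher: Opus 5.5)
Your proposal is correct in outline and does strictly more than the paper. The paper gives no argument for this lemma: it treats (1) as read off from \Cref{def: latent_projection} and takes (2) from \cite{VERMA1993} and (3) from \cite{HUANG2006c}.

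\textbf{Item (2).} Your sketch (project collider-free latent segments, reroute latent colliders to a conditioned descendant, lift induced edges, finish with the walk-to-path lemma) is the standard argument behind Verma's result.

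\textbf{Item (3).} Here your route is genuinely different from a bare citation. An observed vertex never lies in the interior of an inducing segment. So an induced edge has an arrowhead (resp.\ tail) at an observed endpoint exactly when its inducing segment does. Hence projection commutes with the surgeries $G_{\overline{\X}}$, $G_{\underline{\Z}}$ and $G_{\overline{\Z(\W)}}$. Every do-calculus rule then applies in $G$ iff it applies in $G'$, and derivations transfer verbatim. This is exactly the reasoning the paper uses implicitly in Step~0 of the proof of \Cref{thm: main}. It also yields the ``same functional expression'' clause cleanly, from completeness for ADMGs plus soundness in $G$.

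\textbf{The semantic half.} The citation is what settles this, and you are right not to fake it. Your phrase ``to match the model class of $G$'' is inaccurate. With one independent latent per bidirected edge, the model class of $G'$ is genuinely different from that of $G$. For example, three observed copies of one latent coin are realizable with a single latent parent of all three, but not in the triangle $A\dashedleftrightarrow B\dashedleftrightarrow C\dashedleftrightarrow A$ with pairwise latents. So (3) is an equivalence of identifiability, not of models. Moreover, completeness for DAGs with arbitrary latent structure is usually obtained by reducing to the projection. Deriving (3) purely from ``completeness in both graphs'' would therefore risk circularity.

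\textbf{One real error.} In (1) you claim that segments containing a latent collider are never d-connecting, because latent nodes are never conditioned on. That is false: a latent collider is open once it has an observed descendant in the conditioning set. Take $X\to U\leftarrow Y$, $U\to C$ with $U$ latent. Then $X$ and $Y$ are d-connected given $C$, yet $X\to U\leftarrow Y$ has no projection; $G'$ is $X\to C\leftarrow Y$. Your rerouting step in (2) is exactly what handles this, so simply delete the remark.

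Your restriction of (1) to paths whose latent segments are collider-free is a genuine sharpening. The paper's item (1), read literally, fails for paths with latent colliders.

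\textbf{Smaller points.} After rerouting you hold a walk, not a path, so what you project in (2) is a walk. Also, invariance of $\Z\setminus An(\W)$ follows from ancestry among observed vertices being preserved by construction, since latent direct segments correspond to induced directed edges. It does not follow from (2).
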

            
            % \begin{lemma} \label{lem: latent_prog_preserves_identifiability}
            %     The causal effect $P_{\X}(\Y)$ is identifiable in $G$ if and only if it is identifiable in $Pj(G, \V)$. If they are identifiable, then they are so with the same functional expression for $P_{\X}(\Y)$ \cite{HUANG2006c}.
            % \end{lemma}
            Lemma~\ref{lem: latent_proj_properties} justifies projecting out variables—whether latent or observed—that are not relevant to a specific identifiability query, greatly simplifying graphical reasoning.
\section{Main results} \label{sec: main}
    A natural starting point in attempting to weaken the front-door criterion is to ask: which conditions of the original criterion are strictly necessary for the front-door functional to hold? We find that only the requirement that (1) $\Z$ blocks all directed paths from $\X$ to $\Y$ is indispensable. 
    \begin{theorem} [Condition (1) is necessary] \label{thm: cond_1_necessary}
        If there exists a direct path $\X$ to $\Y$ not blocked by $\Z$ in $G$, then there exists $P(\vv)$ which factorizes according to $G$, but
        \[
        P_\X(\y) \neq \sum_\z P(\z \mid \x^*) \sum_\x P(\y \mid \x, \z) P(\x)
        \]
    \end{theorem}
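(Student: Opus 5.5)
The plan is to construct an explicit counterexample by switching off every edge of $G$ except those on one unblocked directed path. Then $\Z$ becomes independent of everything relevant, and the front-door functional collapses to a marginal distribution, whereas the true causal effect is a nontrivial conditional.

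\emph{Step 1: reduce to a clean path.} By hypothesis there is a direct path $X_0 \to W_1 \to \cdots \to W_k \to Y_0$ with $X_0 \in \X$, $Y_0 \in \Y$, and no vertex in $\Z$. I will shorten it so that its interior avoids $\X$ and $\Y$ entirely. To do this, take the last vertex of the path lying in $\X$ as the new start, and then the first vertex after it lying in $\Y$ as the new end. Any latent vertices on the path are allowed; alternatively I will work directly in $G$, where the construction below does not care whether the $W_i$ are observed.

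\emph{Step 2: the distribution.} Let every variable be binary. Each $W_i$ and $Y_0$ copies its predecessor on the path, flipped independently with probability $\varepsilon\in(0,1/2)$. Every other variable, including $X_0$, every element of $\Z$, the remaining members of $\X$ and $\Y$, and all other latents, is an independent fair coin ignoring its parents. This factorizes according to $G$, since each kernel $P(v\mid Pa(v))$ depends only on a subset of $Pa(v)$. Every joint configuration has positive probability, so the standing positivity assumption holds.

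\emph{Step 3: compare the two sides.} Under this model, $\Z$ and $\X\setminus\{X_0\}$ are mutually independent and independent of the path variables. Hence $P(\z\mid\x^*)=P(\z)$ and $P(\y\mid \x,\z)=P(\y\mid x_0)$. The front-door expression therefore equals
\[
\sum_{x_0}P(\y\mid x_0)P(x_0)=P(\y),
\]
in which the $Y_0$-marginal is uniform. On the other side, the truncated factorization gives
\[
P_\X(\y)=P(y_0\mid x_0^*)\prod_{Y\in\Y\setminus\{Y_0\}}\tfrac12 .
\]
This holds because intervening on $\X$ only removes the (inactive) kernels of $\X$ and fixes $X_0=x_0^*$; the other elements of $\X$ have no effect, since no active edge leaves them.

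The noisy copy chain gives
\[
P(Y_0=x_0^*\mid x_0^*)=\tfrac12+\tfrac12(1-2\varepsilon)^{k+1}\neq \tfrac12,
\]
so the two sides differ at $y_0=x_0^*$, which proves the claim.

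The main obstacle is the bookkeeping in Step 1, namely ensuring that the interior of the chosen path contains no other element of $\X$ or $\Y$. An interior $\X$-vertex would be clamped by the intervention and break the chain, which is exactly why I take the last $\X$-vertex on the path as the start. I also need to verify that intervening on the other members of $\X$ has no effect on $Y_0$. This holds because they have no active outgoing edges in the constructed model.
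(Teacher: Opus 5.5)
Your proof is correct and follows essentially the same route as the paper: keep only the edges of one unblocked directed path, so that $\Z$ decouples, the front-door functional collapses to $P(\y)$, and the true effect is a nontrivial conditional along the path. The paper gets $P_\X(\y)=P(\y\mid\x)$ from Rule 2 on the sparsified graph and appeals to a distribution faithful to it, without shortening the path; you instead compute the effect directly from the truncated factorization, using an explicit, strictly positive noisy-copy chain, which is a more concrete version of the same argument that also respects positivity.
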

    \begin{proof}
        By hypothesis, let $$\pi: X \to \hdots \to Y$$ be a direct path from $X \in \X$ to $Y \in \Y$ not containing any node in $\Z$. Let $G_\pi := (\V \sqcup \U, \E')$ be the subgraph of $G = (\V \sqcup \U, \E)$ where $\E' = \operatorname{edge}(\pi)$. I.e., it is the subgraph where we remove all edges except those on path $\pi$. 

        It suffices to prove 
        \[
        P_\X(\y) \neq \sum_\z P(\z \mid \x^*) \sum_\x P(\y \mid \x, \z) P(\x)
        \]
        for $G_\pi$, as any distribution $P$ which factorizes according to $G_\pi$ will also factorize according to the denser graph $G$, though unfaithfully so. 

        We observe the following:
        \begin{enumerate}
            \item [(a)] There are no back-door paths $\X$ to $\Y$ in $G_\pi$ (the only path from $\X$ to $\Y$ is $\pi$).
            \item [(b)] $\Z \indep \Y, \X$, $\Z \indep \Y \mid \X$ (there are simply no paths connecting $\Z$ to $\X$ nor $\Y$, let alone open paths, since no node of $\Z$ lies on the path $\pi$).
        \end{enumerate}
        For any $P$ which factorizes according to $G_\pi$, applying Rule 2 of Pearl's do-calculus (see~\Cref{thm: do-calc} for the rules of Pearl's do-calc) with (a) grants us
        \[
        P_\X(\y) = P (\y \mid \x)
        \]
        Meanwhile,
        \begin{align*}
            \sum_\z P(\z \mid \x^*) \sum_\x P(\y \mid \x, \z) P(\x) &= \sum_\z P(\z) \sum_\x P(\y \mid \x) P(\x) \tag{By (b)}\\
            &= P(\y)
        \end{align*}
        It is not difficult to find a probability distribution $P(\vv)$ faithful to $G_\pi$, and hence where $\Y \not \indep \X$ so that $P(\y \mid \x) \neq P(\y)$.
    \end{proof}
    
    In contrast, items (2) and (3) are not necessary; one, either, or both can be relaxed without contradicting the front-door functional.
    
    Below are several examples of graphs which contradict conditions (2) and/or (3) whilst still satisfying the front-door functional. For selected examples, see Examples~\ref{ex: non-example fdc violate 2} and \ref{ex: non-example fdc violate 3}, where we include a full derivation to illustrate this.
        \textcolor{white}{:3}
        \begin{center}
        \scalebox{0.57}{% https://q.uiver.app/#q=WzAsNDIsWzAsMiwiWF8xIl0sWzEsMiwiWF8yIl0sWzIsMiwiWSJdLFsxLDMsIloiXSxbMywzLCJYIl0sWzQsMywiWl8xIl0sWzUsMywiWl8yIl0sWzYsMywiWSJdLFs0LDIsIlpfMyJdLFs1LDIsIlpfNCJdLFsyLDEsIlgiXSxbMywxLCJaXzEiXSxbNCwxLCJZIl0sWzMsMCwiWl8yIl0sWzMsNCwiXFx0ZXh0eyhhKX0iXSxbOSw0LCJcXHRleHR7KGIpfSJdLFsxNCw0LCJcXHRleHR7KGMpfSJdLFsxNSwzLCJYIl0sWzE2LDMsIlpfMSJdLFsxNywzLCJZIl0sWzE2LDIsIlpfMyJdLFsxNSwyLCJaXzIiXSxbMTQsMCwiWl8yIl0sWzEzLDEsIlgiXSxbMTQsMSwiWl8xIl0sWzE1LDEsIlkiXSxbMTIsMywiWCJdLFsxMywzLCJaXzEiXSxbMTQsMywiWSJdLFsxMiwyLCJaXzIiXSxbMTMsMiwiWl8zIl0sWzE0LDIsIlpfNCJdLFs4LDMsIlhfMSJdLFs5LDMsIlpfMSJdLFsxMCwzLCJZIl0sWzksMiwiWF8yIl0sWzEwLDIsIlpfMyJdLFs4LDIsIlpfMiJdLFs5LDEsIlhfMiJdLFs4LDEsIlhfMSJdLFsxMCwxLCJaIl0sWzExLDEsIlkiXSxbMCwxXSxbMiwxXSxbMSwzXSxbNSw2XSxbNiw3XSxbOSw3LCIiLDAseyJzdHlsZSI6eyJ0YWlsIjp7Im5hbWUiOiJhcnJvd2hlYWQifSwiYm9keSI6eyJuYW1lIjoiZGFzaGVkIn19fV0sWzksOF0sWzgsNl0sWzQsNV0sWzQsOF0sWzcsNCwiIiwyLHsiY3VydmUiOi0zLCJzdHlsZSI6eyJ0YWlsIjp7Im5hbWUiOiJhcnJvd2hlYWQifSwiYm9keSI6eyJuYW1lIjoiZGFzaGVkIn19fV0sWzEwLDExXSxbMTEsMTJdLFsxMCwxMiwiIiwwLHsiY3VydmUiOjMsInN0eWxlIjp7InRhaWwiOnsibmFtZSI6ImFycm93aGVhZCJ9LCJib2R5Ijp7Im5hbWUiOiJkYXNoZWQifX19XSxbMTEsMTNdLFsxMywxMl0sWzEzLDExLCIiLDAseyJjdXJ2ZSI6Miwic3R5bGUiOnsidGFpbCI6eyJuYW1lIjoiYXJyb3doZWFkIn0sImJvZHkiOnsibmFtZSI6ImRhc2hlZCJ9fX1dLFsxOCwyMF0sWzIwLDE5LCIiLDEseyJzdHlsZSI6eyJ0YWlsIjp7Im5hbWUiOiJhcnJvd2hlYWQifSwiYm9keSI6eyJuYW1lIjoiZGFzaGVkIn19fV0sWzIxLDE3XSxbMjEsMjBdLFsyMiwyMywiIiwwLHsic3R5bGUiOnsidGFpbCI6eyJuYW1lIjoiYXJyb3doZWFkIn0sImJvZHkiOnsibmFtZSI6ImRhc2hlZCJ9fX1dLFsyMywyNF0sWzE4LDE5XSxbMTcsMThdLFsyNiwyN10sWzI2LDI5XSxbMjksMjddLFsyOCwzMV0sWzMwLDMxLCIiLDAseyJzdHlsZSI6eyJ0YWlsIjp7Im5hbWUiOiJhcnJvd2hlYWQifSwiYm9keSI6eyJuYW1lIjoiZGFzaGVkIn19fV0sWzMwLDI5XSxbMjcsMjhdLFsyNiwyOCwiIiwwLHsiY3VydmUiOjMsInN0eWxlIjp7InRhaWwiOnsibmFtZSI6ImFycm93aGVhZCJ9LCJib2R5Ijp7Im5hbWUiOiJkYXNoZWQifX19XSxbMzIsMzNdLFszMywzNF0sWzM1LDM2XSxbMzYsMzRdLFszNywzMiwiIiwxLHsic3R5bGUiOnsidGFpbCI6eyJuYW1lIjoiYXJyb3doZWFkIn0sImJvZHkiOnsibmFtZSI6ImRhc2hlZCJ9fX1dLFszOCwzOV0sWzM4LDQwXSxbNDAsNDFdLFszNSwzN10sWzI0LDI1XV0=
        \begin{tikzcd}[ampersand replacement=\&,column sep=scriptsize]
            \&\&\& {Z_2} \&\&\&\&\&\&\&\&\&\&\& {Z_2} \\
            \&\& X \& {Z_1} \& Y \&\&\&\& {X_1} \& {X_2} \& Z \& Y \&\& X \& {Z_1} \& Y \\
            {X_1} \& {X_2} \& Y \&\& {Z_3} \& {Z_4} \&\&\& {Z_2} \& {X_2} \& {Z_3} \&\& {Z_2} \& {Z_3} \& {Z_4} \& {Z_2} \& {Z_3} \\
            \& Z \&\& X \& {Z_1} \& {Z_2} \& Y \&\& {X_1} \& {Z_1} \& Y \&\& X \& {Z_1} \& Y \& X \& {Z_1} \& Y \\
            \&\&\& {\text{(a)}} \&\&\&\&\&\& {\text{(b)}} \&\&\&\&\& {\text{(c)}}
            \arrow[curve={height=12pt}, dashed, tail reversed, from=1-4, to=2-4]
            \arrow[from=1-4, to=2-5]
            \arrow[dashed, tail reversed, from=1-15, to=2-14]
            \arrow[from=2-3, to=2-4]
            \arrow[curve={height=18pt}, dashed, tail reversed, from=2-3, to=2-5]
            \arrow[from=2-4, to=1-4]
            \arrow[from=2-4, to=2-5]
            \arrow[from=2-10, to=2-9]
            \arrow[from=2-10, to=2-11]
            \arrow[from=2-11, to=2-12]
            \arrow[from=2-14, to=2-15]
            \arrow[from=2-15, to=2-16]
            \arrow[from=3-1, to=3-2]
            \arrow[from=3-2, to=4-2]
            \arrow[from=3-3, to=3-2]
            \arrow[from=3-5, to=4-6]
            \arrow[from=3-6, to=3-5]
            \arrow[dashed, tail reversed, from=3-6, to=4-7]
            \arrow[dashed, tail reversed, from=3-9, to=4-9]
            \arrow[from=3-10, to=3-9]
            \arrow[from=3-10, to=3-11]
            \arrow[from=3-11, to=4-11]
            \arrow[from=3-13, to=4-14]
            \arrow[from=3-14, to=3-13]
            \arrow[dashed, tail reversed, from=3-14, to=3-15]
            \arrow[from=3-16, to=3-17]
            \arrow[from=3-16, to=4-16]
            \arrow[dashed, tail reversed, from=3-17, to=4-18]
            \arrow[from=4-4, to=3-5]
            \arrow[from=4-4, to=4-5]
            \arrow[from=4-5, to=4-6]
            \arrow[from=4-6, to=4-7]
            \arrow[curve={height=-18pt}, dashed, tail reversed, from=4-7, to=4-4]
            \arrow[from=4-9, to=4-10]
            \arrow[from=4-10, to=4-11]
            \arrow[from=4-13, to=3-13]
            \arrow[from=4-13, to=4-14]
            \arrow[curve={height=18pt}, dashed, tail reversed, from=4-13, to=4-15]
            \arrow[from=4-14, to=4-15]
            \arrow[from=4-15, to=3-15]
            \arrow[from=4-16, to=4-17]
            \arrow[from=4-17, to=3-17]
            \arrow[from=4-17, to=4-18]
        \end{tikzcd}}
        \end{center}
        
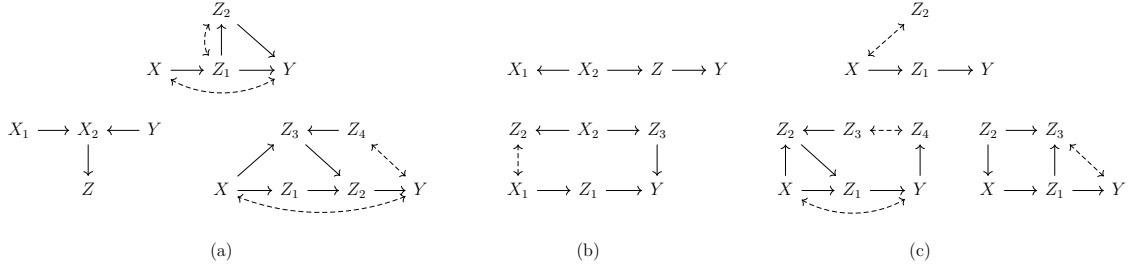
\captionof{figure}{Causal graphs where front-door functional $P_\X(\y) = \sum_\z P(\z \mid \x^*) \sum_\x P(\y \mid \x, \z) P(\x)$ is correct though (a) violate (3), (b) violate (2), (c) violate both.} \label{fig: non-examples_fdc}

    \begin{example} \label{ex: non-example fdc violate 3}
        % https://q.uiver.app/#q=WzAsNCxbMCwxLCJYIl0sWzEsMSwiWl8xIl0sWzIsMSwiWSJdLFsxLDAsIlpfMiJdLFswLDFdLFsxLDJdLFswLDIsIiIsMix7ImN1cnZlIjoyLCJzdHlsZSI6eyJ0YWlsIjp7Im5hbWUiOiJhcnJvd2hlYWQifSwiYm9keSI6eyJuYW1lIjoiZGFzaGVkIn19fV0sWzEsM10sWzMsMl0sWzMsMSwiIiwyLHsiY3VydmUiOjIsInN0eWxlIjp7InRhaWwiOnsibmFtZSI6ImFycm93aGVhZCJ9LCJib2R5Ijp7Im5hbWUiOiJkYXNoZWQifX19XV0=
        \[\begin{tikzcd}[ampersand replacement=\&]
            \& {Z_2} \\
            X \& {Z_1} \& Y
            \arrow[curve={height=12pt}, dashed, tail reversed, from=1-2, to=2-2]
            \arrow[from=1-2, to=2-3]
            \arrow[from=2-1, to=2-2]
            \arrow[curve={height=12pt}, dashed, tail reversed, from=2-1, to=2-3]
            \arrow[from=2-2, to=1-2]
            \arrow[from=2-2, to=2-3]
        \end{tikzcd}\]
        There exists a back-door path $Z_1 \dashedleftrightarrow Z_2 \rightarrow Y$ not blocked by $X$, violating (3). However, applying the Shpitser-Pearl ID algorithm (discussed in \Cref{sec: do-calc}) via its \textsf{R} implementation \cite{TIKKA17}, we obtain 
        \begin{align*}
            P_\x(\y) &= \sum_{z_1,z_2}P(z_2|x,z_1)P(z_1 \mid x)\left(\sum_{x}P(y\mid x,z_1,z_2)P(x)\right) \\
            &= \sum_{z_1,z_2}P(z_2,z_1\mid x)\left(\sum_{x}P(y\mid x,z_1,z_2)P(x)\right).
        \end{align*}
    \end{example}

    \begin{example} \label{ex: non-example fdc violate 2}
        % https://q.uiver.app/#q=WzAsNCxbMSwwLCJYXzIiXSxbMywwLCJZIl0sWzIsMCwiWiJdLFswLDAsIlhfMSJdLFswLDJdLFswLDNdLFsyLDFdXQ==
        \[\begin{tikzcd}[ampersand replacement=\&]
            {X_1} \& {X_2} \& Z \& Y
            \arrow[from=1-2, to=1-1]
            \arrow[from=1-2, to=1-3]
            \arrow[from=1-3, to=1-4]
        \end{tikzcd}\]

        There is an open back-door path $X_1 \leftarrow X_2 \rightarrow Z$, violating (2). However, the interventional distribution is still given by the front-door functional. 
        
        By application of Rule 3 and then Rule 2 of do-calc,
        \begin{align*}
            P_\x(\y) &= P_{x_2}(y) \\
            &= P (y \mid x_2).
        \end{align*}
        
        By simplifying the front-door functional, we obtain
        \begin{align*}
            &\sum_{z}P(z \mid x_1,x_2) \left(\sum_{x_1,x_2}P(y \mid x_1, x_2, z)P(x_1, x_2)\right) \\
            &= \sum_{z}P(z \mid x_1,x_2) \left(\sum_{x_1,x_2}P(y \mid z)P(x_1, x_2) \tag{$Y \indep \X \mid Z$}\right)\\
            &= \sum_{z}P(z \mid x_1,x_2) P(y \mid z) \\
            &= \sum_{z}P(z \mid x_2) P(y \mid z) \tag{$Z \indep X_1 \mid X_2$} \\
            &= \sum_{z}P(z \mid x_2) P(y \mid z, X_2) \tag{$Y \indep X_2 \mid Z$} \\
            &= \sum_{z} P(y, z \mid x_2) \\
            &= P(y \mid x_2) \\
            &= P_\x(\y),
        \end{align*}
        as desired. 
    \end{example}

    These examples demonstrate the original front-door criterion is sufficient but certainly not necessary for the interventional distribution to be given by the front-door functional. This motivates a search for a weaker set of graphical conditions. We now present a new criterion that strictly generalizes Pearl's.
        \begin{definition}
            A path $\pi$ from $\X$ to $\Y$ is \textit{proper} if the only node of $\pi$ in $\X$ is the first node.
        \end{definition}

        \begin{theorem} [Main Theorem] 
            Let $\X, \Y, \Z \subset \V$ be disjoint. Suppose $\Z$ satisfies the following criterion relative to $(\X, \Y)$:
            \begin{enumerate}
                \item [(i)] There are no open back-door paths from $\X$ to $\Z$.
                \item [(ii)] There are no open, proper front-door paths from \(\X\) to \(\Y\) given \(\Z\).
            \end{enumerate}
            Then, the causal effect is determined by the front-door functional
            $$P_\X(\y) = \sum_\z P(\z | \x^*) \sum_\x P(\y | \z,\x)P(\x)$$
        \end{theorem}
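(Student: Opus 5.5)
The plan is to follow the structure of Pearl's do-calculus derivation of Theorem~\ref{thm: fdc}, but to re-derive the one step that used Condition (3) from Condition (ii). By Lemma~\ref{lem: latent_proj_properties} we may work in the latent projection $Pj(G,\V)$ throughout, so all paths are between observed vertices. We start from
\[
P_\X(\y)=\sum_\z P_\X(\z)\,P_\X(\y\mid\z).
\]
The argument then has three steps, corresponding to the three factors that must be matched with $P(\z\mid\x^*)$ and $\sum_\x P(\y\mid\z,\x)P(\x)$.

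\textbf{Step 1.} Show $P_\X(\z)=P(\z\mid\x^*)$. This is Rule 2 of Theorem~\ref{thm: do-calc}, whose hypothesis is $(\Z\indep\X)_{G_{\underline{\X}}}$. Paths from $\X$ to $\Z$ surviving in $G_{\underline{\X}}$ are back-door paths, and Condition (i) says none of them is open. This step is identical to Pearl's proof.

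\textbf{Step 2.} Show $P_\X(\y\mid\z)=P_{\X,\Z}(\y)$ and then $P_{\X,\Z}(\y)=P_\Z(\y)$.
\begin{itemize}
\item The first equality is Rule 2 applied to $\Z$. It needs $(\Y\indep\Z\mid\X)_{G_{\overline{\X},\underline{\Z}}}$.
\item The second is Rule 3 applied to $\X$. It needs $(\Y\indep\X\mid\Z)_{G_{\overline{\X(\Z)}}}$, together with the observation that after intervening on $\Z$ every remaining path from $\X$ to $\Y$ in $G_{\overline{\Z}}$ starts with an edge out of $\X$.
\end{itemize}
This is where Condition (ii) enters directly. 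Any path of $G_{\overline{\Z}}$ from $\X$ to $\Y$ can be shortened to a proper one by starting at its last $\X$-vertex. A proper front-door path open given $\Z$ is excluded by (ii). A back-door proper path would enter $\X$ through an arrow, which is removed by $\overline{\X}$.

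\textbf{Step 3.} Show $P_\Z(\y)=\sum_\x P(\y\mid\z,\x)P(\x)$, i.e.\ that $\X$ is a valid adjustment set for $(\Z,\Y)$. Pearl's Condition (3) is exactly what was used here, so this is where the real work lies. I would verify the adjustment criterion of Definition~\ref{thm: gen bdc} for the pair $(\Z,\Y)$ with adjustment set $\X$.
\begin{itemize}
\item \emph{No forbidden descendants.} No $X\in\X$ may be a descendant of a vertex on a directed path from $\Z$ to $\Y$. Such a configuration would create a directed path $\X\to\cdots\to\Z$ feeding a cycle, or would give a back-door path from $\X$ to $\Z$ forbidden by (i). I would use acyclicity and (i) to rule it out.
\item \emph{Non-causal paths blocked.} Suppose $\rho$ is a non-causal path from $\Z$ to $\Y$ that is open given $\X$. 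The idea is to concatenate it with an edge or directed segment $X\to\cdots\to Z$. Then $Z$ becomes a collider, which is open because we condition on $\Z$. Choosing the first $\Z$-vertex along the directed segment and the last $\X$-vertex makes the concatenation a proper front-door path from $\X$ to $\Y$ open given $\Z$, contradicting (ii).
\item \emph{The missing case.} If some $Z$ on $\rho$ is not a descendant of $\X$, the concatenation trick is unavailable. There I expect either the term to vanish in the sum, because $P(\z\mid\x^*)$ then does not depend on the relevant coordinates, or Condition (i) to supply an open back-door path from $\X$ to $\Z$.
\end{itemize}

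I expect Step 3 to be the main obstacle, and in particular the bookkeeping that the spliced path stays open. Non-collider $\X$-vertices on $\rho$ become blocked once we condition on $\Z$ instead of $\X$, and intermediate $\Z$-vertices on $\rho$ become blocked non-colliders. I would handle this by always cutting $\rho$ at the $\Z$-vertex closest to $\Y$ and the $\X$-vertex closest to $\Y$, so that the resulting path is proper.

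If the graphical adjustment route in Step 3 fails, the fallback is to verify the identity directly from the truncated factorization in the latent projection. I would partition $\V$ into ancestors of $\Z$ and the rest, and use (i) and (ii) as the d-separations that let the $\x$-sum factor out.
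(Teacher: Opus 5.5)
\textbf{There is a genuine gap.} Your Steps 2 and 3 intervene on all of $\Z$. Both the Rule 2 step $P_\X(\y\mid\z)=P_{\X,\Z}(\y)$ and the adjustment identity $P_\Z(\y)=\sum_\x P(\y\mid\z,\x)P(\x)$ essentially require Pearl's Condition (3), which (i) and (ii) do not imply.

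Take $X\to Z_1\to Y$, $Z_2\to Z_1$, $Z_2\dashedleftrightarrow Y$, with $\Z=\{Z_1,Z_2\}$.
\begin{itemize}
\item There are no edges into $X$, so (i) holds.
\item The only proper front-door paths are $X\to Z_1\to Y$ and $X\to Z_1\leftarrow Z_2\dashedleftrightarrow Y$. They are blocked given $\Z$ at the non-colliders $Z_1$ and $Z_2$ respectively, so (ii) holds.
\item Yet $Z_2\dashedleftrightarrow Y$ survives in $G_{\overline{X}\underline{\Z}}$, so your Rule 2 hypothesis $(Y\indep\Z\mid X)_{G_{\overline{X}\underline{\Z}}}$ fails.
\item $P_\Z(y)$ does not depend on $z_2$, while $\sum_x P(y\mid\z,x)P(x)$ does. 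So $X$ is not an adjustment set for $(\Z,Y)$.
\end{itemize}

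The error also does not wash out after summing against $P(\z\mid x^*)$, as you hoped in your ``missing case''. Let $U$ be a fair coin, $Z_2=U$, $Z_1=X\oplus Z_2$, $Y=Z_1\oplus U$, with each variable flipped with small probability for positivity. Then $Y\approx X$, so $P_X(Y{=}x^*)\approx 1$. But $P_\Z(y)=\tfrac12$ for every $\z$, so $\sum_\z P(\z\mid x^*)P_\Z(Y{=}x^*)=\tfrac12$. The front-door functional itself is correct in this model, so it is your decomposition that breaks, not the theorem.

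\textbf{The missing idea} is to intervene only on $\Z^{ch}:=\Z\cap Ch(\X)$ and keep $\Z^{nch}:=\Z\setminus\Z^{ch}$ as conditioning variables throughout. Children here are taken in $Pj(G,\X\sqcup\Y\sqcup\Z)$, which is why the paper projects onto these variables rather than onto $\V$.

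Your splicing trick (prefix an edge $X\to Z$ so that $Z$ becomes an open collider) is the right tool. However, it is only available for $Z\in\Z^{ch}$. A longer segment $X\to\cdots\to Z$ in the projection has interior $\Z$-vertices, and these are blocked non-colliders given $\Z$. That is precisely your missing case.

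The paper's chain is as follows.
\begin{itemize}
\item $P_\X(\y\mid\z)=P_{\X,\Z^{ch}}(\y\mid\z^{nch})$ by Rule 2. An open back-door path from $\Z^{ch}$ to $\Y$ given $\X,\Z^{nch}$ must be a collider chain through $\Z^{nch}$, and prefixing $X\to Z^{ch}$ contradicts (ii).
\item $P_{\X,\Z^{ch}}(\y\mid\z^{nch})=P_{\Z^{ch}}(\y\mid\z^{nch})$ by Rule 3. No $X$ is an ancestor of $\Z^{nch}$ in $G_{\overline{\Z^{ch}}}$, so $\X(\Z^{nch})=\X$.
\item Expanding over $\x$, one shows $P_{\Z^{ch}}(\x\mid\z^{nch})=P(\x)$ by Rules 1 and 3.
\item Finally $P_{\Z^{ch}}(\y\mid\x,\z^{nch})=P(\y\mid\x,\z)$ by Rule 2, again via splicing.
\end{itemize}
In the example, $Z_2\in\Z^{nch}$ is never intervened on, which is exactly what sidesteps $Z_2\dashedleftrightarrow Y$.

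Your Step 1 and your Rule 3 step dropping $\X$ are fine. The truncated-factorization fallback is too unspecific to assess.
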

        The full proof, using Pearl's do-calculus, is provided in \Cref{apx: pf_main}.

        \begin{theorem}[Necessity of Condition (ii)] \label{thm: cond_ii_necessary}
        If there exists a proper, open front-door path $\X$ to $\Y$ conditioned on $\Z$, then there exists $P(\vv)$ which factorizes according to $G$, but the front-door functional does not hold. I.e.,
            \[
            P_\X(\y) \neq \sum_\z P(\z \mid \x^*) \sum_\x P(\y \mid \x, \z) P(\x)
            \]
        \end{theorem}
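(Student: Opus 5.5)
The plan follows the proof of Theorem~\ref{thm: cond_1_necessary}: pass to a sparse subgraph $G_\pi \subseteq G$, exhibit a distribution on $G_\pi$ for which the front-door functional fails, and note that any distribution factorizing according to $G_\pi$ also factorizes according to $G$. Fix a proper front-door path $\pi : X \to \cdots Y$ with $X \in \X$, $Y \in \Y$ that is open given $\Z$. Openness means every non-collider of $\pi$ lies outside $\Z$, and every collider $C$ of $\pi$ has a directed path $\delta_C : C \to \cdots \to Z_C$ with $Z_C \in \Z$. Properness means no other node of $\pi$ is in $\X$. If $\pi$ has no collider, it is a direct path avoiding $\Z$, and we are done by Theorem~\ref{thm: cond_1_necessary}. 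So assume $\pi$ has colliders.

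Let $G_\pi$ keep only the edges of $\pi$ and of the chosen $\delta_C$. To keep $G_\pi$ tidy, choose each $\delta_C$ shortest, so that its only $\Z$-node is its endpoint. Then take the closest collider to $X$ and, by the standard argument, shorten or reroute $\pi$ so that the $\delta_C$'s meet $\pi$ and each other only where necessary. All variables of $\V$ not on these paths are independent and irrelevant. The nodes of $\X \setminus \{X\}$ are isolated unless they lie on some $\delta_C$, and I would first try to exclude that by minimality. Failing that, I would treat them as additional fixed inputs, which only adds known constants to the computation.

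Next I would put a linear Gaussian model on $G_\pi$ with generic edge coefficients, discretizing afterwards if needed for the strict-positivity standing assumption. Both sides are then computable by path tracing.
\begin{itemize}
\item[(1)] \emph{True effect.} $\partial_{x^*} E_\X[Y]$ is the sum over direct paths $X \to \cdots \to Y$ in $G_\pi$. This is zero when $\pi$ has a collider and no other directed route exists.
\item[(2)] \emph{Front-door functional.} Its mean equals $\sum_{Z'} \beta_{Z' \mid X}(x^*) \cdot \gamma_{Y \mid Z', X}$. Here $Z$ depends on $x^*$ through the segment $X \to \cdots \to C_1 \to \cdots \to Z_{C_1}$. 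The partial regression coefficient of $Y$ on $Z$, given $X$, is nonzero because $\pi$ is open given $\Z$: conditioning on the $Z_C$ activates every collider.
\end{itemize}
The functional's slope in $x^*$ is therefore a polynomial in the edge coefficients containing the monomial equal to the product of all edge weights along $\pi$ and the $\delta_C$ (up to a nonvanishing rational factor coming from conditional variances). The true slope does not contain that monomial. Hence the difference is a nonzero rational function, and generic coefficients give $P_\X(\y) \neq \sum_\z P(\z \mid \x^*) \sum_\x P(\y \mid \x, \z) P(\x)$.

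The main obstacle is step (2) in general position. I must check that the distinguished monomial cannot cancel against contributions from other treks created by overlapping $\delta_C$'s, and that conditioning on $X$ does not block the relevant association. The latter is guaranteed by properness, since $X$ is only the endpoint of $\pi$. I would handle cancellation by a degree or "leading monomial" argument: the monomial using every edge of $G_\pi$ exactly once arises from a unique trek system. If that proves messy, I would fall back to verifying the claim on the reduced minimal graph $X \to C \leftarrow \cdots$ with a single collider, and induct on the number of colliders by latent projection (Lemma~\ref{lem: latent_proj_properties}). Projecting out intermediate non-collider nodes preserves d-connection and identifiability formulas, which would reduce every case to a bounded family of small graphs checkable by direct computation.
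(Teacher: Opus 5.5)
\textbf{The main gap: nodes of $\X\setminus\{X\}$ on the paths $\delta_C$.} Properness constrains only $\pi$, not the collider-to-$\Z$ paths $\delta_C$. The functional, however, conditions on \emph{all} of $\X$ in both $P(\z\mid\x^*)$ and $P(\y\mid\z,\x)$. An $X'\in\X$ lying on $\delta_C$ is therefore a conditioned non-collider on exactly the route by which conditioning on $Z_C$ activates $C$. It is not a harmless ``fixed input'': it can make the coefficients $\gamma_{Y\mid Z',\X}$ vanish identically. No minimality argument removes this case, and in fact the statement is false there. Take $X\to C\leftarrow Y$, $C\to X'\to Z$, with $\X=\{X,X'\}$, $\Y=\{Y\}$, $\Z=\{Z\}$. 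The path $X\to C\leftarrow Y$ is proper, front-door, and open given $\Z$, since $C\in An(\Z)$. Yet $Y\indep Z\mid\X$, so for every $P$ factorizing according to $G$,
\[
\sum_z P(z\mid\x^*)\sum_\x P(y\mid z,\x)P(\x)=\sum_\x P(y\mid\x)P(\x)=P(y)=P_\X(y).
\]
The paper's proof has the same blind spot. Lemma~\ref{lem: bad proj cases} claims that (ii) fails in $G$ iff it fails in $G'=Pj(G,\X\sqcup\Y\sqcup\Z)$. Here, though, $G'$ is $X\to X'\leftarrow Y$, $X'\to Z$, which has no proper open front-door path from $\X$ to $\Y$. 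A correct necessity result needs an extra hypothesis excluding this, e.g.\ that the violation is visible in $G'$. That is the situation the paper's Steps 2--10 actually treat.

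\textbf{The non-cancellation step is asserted, not proved.} Even granting such a hypothesis, the core of your argument is that the slope $\sum_{Z'}\beta_{Z'\mid\X}\gamma_{Y\mid Z',\X}$ contains a monomial that cannot cancel. Partial regression coefficients are ratios of covariance minors, whose numerators are signed sums over trek systems. In a general $G_\pi$, several $Z'$ can depend on $X$ through overlapping $\delta_C$'s. So uniqueness of the ``every edge once'' term is precisely what needs proof. Your fallback does not work either. The number of colliders (the $k$ in shapes (b), (c)) is unbounded, and latent projection removes non-colliders, not colliders, so there is no reduction to a bounded family of small graphs.

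\textbf{How the paper handles this, and what your idea could buy.} The paper avoids this bookkeeping by projecting first, leaving only the canonical chains (a)--(c), in which $Z_1$ is the only descendant of $X$. It then proves non-vanishing for every $k$ via the transfer matrices and $\operatorname{sgn}\det A=(-1)^{k-1}$, followed by the noisy-copy lifting to $G$. Your Gaussian idea would be a clean substitute for Steps 2--5 on those chains. With $a$ the coefficient on $X\to Z_1$, the functional's slope is $a\cdot\gamma_{Y\mid Z_1;\,(\Z\setminus\{Z_1\})\cup\X}$. This is generically nonzero because $Y$ and $Z_1$ are d-connected given $(\Z\setminus\{Z_1\})\cup\X$, whereas the true slope is $0$. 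You would still need the lifting of Steps 7--10. You would also need a real argument for strictly positive distributions on the discrete domain that the paper's sums presuppose. ``Discretizing afterwards'' is not such an argument, since coarsening the variables destroys the linear path-tracing your computation relies on.
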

        
        The proof, given in \Cref{apx: pf_cond_ii_necessary}, proceeds by explicit construction of counterexamples. If condition (ii) is violated, then the latent projection $Pj(G, \X \sqcup \Y \sqcup \Z)$ necessarily contains one of the following subgraphs:
        \begin{enumerate}
            \item [(a)] $X \rightarrow Y$,
            \item [(b)] $X \rightarrow Z_1 \dashedleftrightarrow Z_2 \dashedleftrightarrow Z_3 \dashedleftrightarrow \cdots \dashedleftrightarrow Z_k \leftarrow Y$ ,
            \item [(c)] $X \rightarrow Z_1 \dashedleftrightarrow Z_2 \dashedleftrightarrow Z_3 \dashedleftrightarrow \cdots \dashedleftrightarrow Z_k \dashedleftrightarrow Y$.
        \end{enumerate}
        Each of these configurations admits a strictly positive distribution that factorizes according to $G$ but violates the front-door functional. 
        \begin{example}
            Consider case (b) with $k = 1$, yielding the graph
            % \begin{center}
            % % https://q.uiver.app/#q=WzAsMyxbMSwxLCJaXzEiXSxbMCwwLCJYPVVfMCJdLFsyLDAsIlkgPVVfMSJdLFsxLDBdLFsyLDBdXQ==
            % \begin{tikzcd}[ampersand replacement=\&]
            %     {X \sim \mathcal{U}(\{0,1\})} \&\& {Y \sim \mathcal{U}(\{0,1\})} \\
            %     \& {Z \sim \mathcal{U}(\{0,1, X, Y\})}
            %     \arrow[from=1-1, to=2-2]
            %     \arrow[from=1-3, to=2-2]
            % \end{tikzcd}.
            % \end{center}
            \[
            X \to Z \leftarrow Y.
            \]
            Let $X,Y \sim \mathcal{U}(\{0,1\})$, and let
            $Z \sim \mathcal{U}(\{0,1,X,Y\})$.
            This distribution is strictly positive and factorizes according to the graph.

            For $y = 0$, $x^* = 0$, the interventional distribution satisfies
            \[
            P_X(y) = \frac{1}{2}.
            \]
            % \begin{align*}
            %     P'_X(y) &= \sum_{z}P'(y = 0)P'(z \mid x^* = 0, y = 0) \tag{By definition} \\
            %     &= \frac{1}{2}.
            % \end{align*}
            However, direct computation shows that the front-door functional evaluates to
            % \begin{align*}
            %     &\sum_z P'(z \mid x^* = 0) \sum_{x} P'(y  = 0\mid z, x)P'(x) \\
            %     =& P'(z = 0 \mid x^* = 0) \left[ P'(y  = 0\mid z = 0, x = 0)P'(x = 0) +  P'(y  = 0\mid z = 0, x = 1)P'(x = 0)\right]\\
            %     +& P'(z = 1 \mid x^* = 0) \left[ P'(y  = 0\mid z = 1, x = 0)P'(x = 0) +  P'(y  = 0 \mid z = 1, x = 1)P'(x = 0)\right] \\
            %     =& \frac{5}{8}\cdot\left(\frac{3}{5}+\frac{2}{3}\right)\cdot\frac{1}{2} + \frac{3}{8}\cdot\left(\frac{1}{3} + \frac{2}{5}\right)\cdot\frac{1}{2} \\
            %     =& \frac{8}{15}.
            % \end{align*}
            \[
            \sum_z P(z \mid x^*) \sum_x P(y \mid z, x) P(x) = \frac{8}{15} \neq \frac{1}{2}.
            \]
            Thus, the front-door functional fails for this distribution.
        \end{example}
        We now summarize the implications of these results and their relationship to Pearl’s original front-door criterion.
    \subsection{Discussion and Implications} \label{subsec: discussion}
        Our criterion replaces the original conditions (2) and (3) with a weaker pair (i) and (ii). Condition (i) is identical to the original condition (2). Condition (ii) alone implies condition (1), but is overall strictly weaker than the original set of conditions (1) and (3); it is necessary (Theorem \ref{thm: cond_ii_necessary}) and, when combined with (i), sufficient.
        
        This generalization significantly expands the class of graphs for which the front-door functional is known to be valid. For instance, the models in Figures \ref{fig: non-examples_fdc}(a) and (b) are admitted by our new criterion but excluded by the original.
        
        For a brief motivation behind this criterion, we refer the interested reader to a heuristic deduction based on the theory of counterfactuals in Appendix \ref{apx: heuristic_derivations}. 
\section{Proof of \Cref{thm: main}} \label{apx: pf_main}
    \begin{proof} 
        \vspace{6.8pt}

        \noindent \textbf{Step 0:} To simplify the problem, we consider the latent projection
        \[
        G' = Pj(G, \X \sqcup \Y \sqcup \Z),
        \]
        which is the projection of \(G\) onto only the variables $\X, \Y, \Z$ relevant to the formula
        \[
        P_{\X}(\Y) = \sum_\z P(\z \mid \x^*) \sum_\x P(\y \mid \x, \z) P(\x).
        \]
        By completeness of do-calculus, if this formula holds in \(G'\), there exists a finite sequence of valid do-calculus operations in \(G'\) producing it.\footnote{In effect, the remaining observed nodes \(\W = \V \setminus (\X \sqcup \Y \sqcup \Z)\) of $G$ are treated as latent. While \(\W\) may influence the joint distribution in general, their values are unnecessary to compute \(P_\X(\Y)\); the latent projection ensures that all dependencies actually relevant to this computation among \(\X, \Y, \Z\) are preserved.} By Lemma~\ref{lem: latent_proj_properties}(1) and (2), all such operations remain valid in the original graph \(G\). Thus, it suffices to verify the formula on the ``smaller'' graph \(G'\), which is an ADMG with observed nodes \(\X, \Y, \Z\), and where Conditions (i) and (ii) continue to hold also by Lemma~\ref{lem: latent_proj_properties}(1) and (2).
        
        \vspace{6.8pt}
        
        \noindent \textbf{Step 1:} 
        We now walk through the algebraic derivation, citing the necessary conditions and rules from do-calculus (\Cref{thm: do-calc}) along the way. The validity of each step will be elaborated and proved in the subsequent discussion.
    
        We begin by decomposing $P_\X(\y)$ using marginalization:
        \begin{align*}
            P_\X(\y) &= \sum_\z P_\X(\y \mid \z) P_\X(\z) \tag{By marginalization}
        \end{align*}
    
        We need to evaluate the two factors $P_\X(\z)$ and $P_\X(\y \mid \z)$.
    
        For the first term, Condition (i) together with do-calculus Rule 2 imply
        \begin{align*}
            P_\X(\z) &= P(\z \mid \x^*) \tag{I: By Cond. (i) and Rule 2}
        \end{align*}
    
        For the second term, we partition $\Z$ into those variables which are children of $\X$ and those which are not, writing
        \[
            \Z^{ch} := \Z \cap Ch(\X), \quad \Z^{nch} = \Z \setminus \Z^{ch}.
        \]
        We can then expand and evaluate as follows:
        \begin{align*}
            P_\X (\y \mid \z) &= P_\X(\y \mid \z^{ch}, \z^{nch}) \\
            &= P_{\X, \Z^{ch}}(\y \mid \z^{nch}) \tag{II: By Cond. (ii) and Rule 2} \\
            &= P_{\Z^{ch}}(\y \mid \z^{nch}) \tag{III: By Cond. (ii) and Rule 3} \\
            &= \sum_\x P_{\z^{ch}}(\y \mid \x, \z^{nch}) P_{\Z^{ch}}(\x \mid \z^{nch}) \tag{By marginalization}\\
            P_{\Z^{ch}}(\x \mid \z^{nch}) &= P_{\Z^{ch}}(\x)\tag{IV: By Cond. (i), (ii) and Rule 1} \\
            &=P(\x) \tag{V: By Cond. (i) and Rule 3} \\
            P_{\Z^{ch}}(\y \mid \x, \z^{nch}) &= P(\y \mid \x, \z) \tag{VI: By Cond. (i), (ii) and Rule 2} \\
            P_\X (\y \mid \z) &= \sum_\x P(\y \mid \x, \z) P(\x)
        \end{align*}
        Then,
        \begin{align*}
            P_\X(\y) &= \sum_\z P(\z \mid \x^*) \sum_\x P(\y \mid \x, \z) P(\x)
        \end{align*}
        as desired.
        
        The list below discusses each application of do-calc above in greater detail.
            \begin{enumerate}
                \item [(I)] By rule 2 of do-calc, 
                $$P_\X(\z) = P(\z \mid \x^*) \text{ if } (\Z \indep \X)_{G_{\underline{\X}}}.$$
                The only paths from $\X$ to $\Z$ in $G_{\underline{\X}}$ must be back-door since all outgoing edges from $\X$ have been removed. By $(i)$, none of these paths can be open. Hence, $(\Z \indep \X)_{G_{\underline{\X}}}$ as desired.
                \item [(II)] By Rule 2 of do-calc,
                $$P_\X(\y \mid \z^{ch}, \z^{nch}) = P_{\X, \Z^{ch}}(\y \mid \z^{nch}) \text{ if } (\Y \indep \Z^{ch} \mid \X, \Z^{nch})_{G_{\overline{\X}\underline{\Z^{ch}}}}.$$
                If, for the sake of contradiction, $(\Y \not \indep \Z^{ch} \mid \X, \Z^{nch})_{G_{\overline{\X}\underline{\Z^{ch}}}}$, then there exists an open back-door path $\pi$ from $\Z^{ch}$ to $\Y$ given $\X, \Z^{nch}$ in $G_{\overline{X}\underline{\Z^{ch}}}$. If there exists an interior node of $\pi$ in $\Z^{ch}$, then the subpath of $\pi$ beginning at that interior node is also a back-door path (since there are no edges $\Z^{ch} \rightarrow \cdots$ from $\Z^{ch}$ in $G_{\overline{\X}\underline{\Z^{ch}}}$). Therefore, without loss of generality, assume the only node of $\pi$ in $\Z^{ch}$ is the first (i.e., $\pi$ is a proper path from $\Z^{ch}$ to $\Y$). Note $\pi$ contains no nodes in $\X$, as there are no edges going into $\X$ in any subgraph of $G_{\overline{X}}$, so any path containing $\X$ must contain it as a fork $\hdots \leftarrow X \rightarrow \hdots$, but then $\pi$ would be blocked since we condition on $\X$. Hence, $\pi$ is an open back-door path from $\Z^{ch}$ to $\Y$ conditioned on $\X, \Z^{nch}$ whose internal nodes are all in $\Z^{nch}$. Since $\X$ opens no colliders in any subgraph of $G_{\overline{X}}$ by virtue of having no ancestors nor incoming arrows, we may ignore conditioning on $\X$, and $\pi$ will remain open. From here, we can deduce every internal node, being in $\Z^{nch}$, is a collider. I.e., $\pi$ is of the form
                    $$\begin{array}{ccccccccccccccccccc}
                    Z^{ch} & \dashedleftrightarrow & Z^{nch} & \dashedleftrightarrow & Z^{nch} & \dashedleftrightarrow & \cdots & \dashedleftrightarrow & Y
                    \end{array}$$
                Since $Z^{ch}$ is a child of some $X \in \X$, if we append $X$ to the front of $\pi$, we have an open front-door path $\X$ to $\Y$ conditioned on $\Z$. 
                $$\begin{array}{ccccccccccccccccccc}
                    X & \rightarrow & Z^{ch} & \dashedleftrightarrow & Z^{nch} & \dashedleftrightarrow & Z^{nch} & \dashedleftrightarrow & \cdots & \dashedleftrightarrow & Y
                    \end{array}$$
                This contradicts condition (ii), and hence, $(\Y \indep \Z^{ch} \mid \X, \Z^{nch})_{G_{\overline{\X}\underline{\Z^{ch}}}}$, as desired.
                
                \item [(III)] By Rule 3 of do-calc, $$P_{\X, \Z^{ch}}(\y \mid \z^{nch}) = P_{\Z^{ch}}(\y \mid \z^{nch}) \text{ if } \left( \X \indep \Y \mid \Z \right)_{G_{\overline{\Z^{ch}}, \overline{\X(\Z^{nch})}}}$$ where $\X(\Z^{nch}) = \X \setminus An(\Z^{nch})_{G_{\overline{\Z^{ch}}}}$. 
                
                We claim in ${G_{\overline{\Z^{ch}}}}$, no $X \in \X$ is an ancestor of any $\Z$. If, for the sake of contradiction, $\X$ is an ancestor of some $\Z$ in ${G_{\overline{\Z^{ch}}}}$, then there exists a (without loss of generality, proper) direct path $\rho$ from some $X \in \X$ to some $Z \in \Z$. The second node of $\rho$ cannot be in $\Z$, as this would imply the existence of an edge $X \rightarrow Z^{ch}$ into $\Z^{ch}$, which does not exist in ${G_{\overline{\Z^{ch}}}}$. The second node of $\rho$ also cannot be in $\Y$, as this would violate condition (ii) which states there are no open proper front-door paths from $\X$ to $\Y$. Having exhausted all of our options, we conclude $\X$ and $An(\Z^{nch}) \subseteq An(\Z)$ share no common nodes. 
                
                Thus, $\X(\Z^{nch})$ simplifies to $\X$, and the condition for applying Rule 3 of do-calc reduces to \(\left( \X \indep \Y \mid \Z \right)_{G_{\overline{\Z^{ch}}, \overline{\X}}}\). Note (ii) implies precisely the independence \(\left( \X \indep \Y \mid \Z \right)_{G_{\overline{\X}}}\), and since removal of edges does not create new dependencies, it implies \(\left( \X \indep \Y \mid \Z \right)_{G_{\overline{\Z^{ch}}, \overline{\X}}}\), as desired.
                
                \item [(IV)] By Rule 1 of do-calc, $$P_{\Z^{ch}}(\x \mid \z^{nch}) = P_{\Z^{ch}}(\x) \text{ if } (\Z^{nch} \indep \X \mid \Z^{ch})_{G_{\overline{\Z^{ch}}}}.$$ Suppose for the sake of contradiction, there exists an open path $\pi$ from $\X$ to $\Z^{nch}$ given $\Z^{ch}$ in $G_{\overline{\Z^{ch}}}$. Without loss of generality, assume $\pi$ from $\X$ to $\Z^{nch}$ is a proper path (i.e. the only node in $\X$ is the first). Note conditioning on $\Z^{ch}$ cannot open any colliders in $G_{\overline{\Z^{ch}}}$ since all edges into $\Z^{ch}$ have been removed. Hence, conditioning on $\Z^{ch}$ only blocks formerly open paths. Hence, if $\pi$ is open when conditioning on $\Z^{ch}$, it remains open if we ignore conditioning. Without conditioning, we know from (i) there are no open back-door paths from $\X$ to $\Z$, so $\pi$ must instead be a front-door path. If so, then the second node must be either in $\Y$ or $\Z^{nch}$, since all edges into $\Z^{ch}$ have been removed. However, the second node cannot be $\Y$ since that is a direct (and hence front-door) path from $\X$ to $\Y$ not blocked by $\Z$, contradicting (ii); it also cannot be in $\Z^{nch}$ since that contradicts the definition of $\Z^{nch}$ being the set of non-children of $\X$. Hence, $\pi$ cannot exist, and thus $(\Z^{nch} \indep \X \mid \Z^{ch})_{G_{\overline{\Z^{ch}}}}$.
                \item [(V)] By Rule 3 of do-calc $$P_{\Z^{ch}}(\x) =P(\x) \text{ if } (\Z^{ch} \indep \X)_{G_{\overline{\Z^{ch}(\varnothing)}}}$$ 
                where $\Z^{ch}(\varnothing) = \Z^{ch} \setminus An(\varnothing) = \Z^{ch}$. If, on the contrary, there exists a path $\pi$ from $\Z^{ch}$ to $\X$ in $G_{\overline{\Z^{ch}}}$, it must be front-door since we have removed all incoming edges to $\Z^{ch}$. Then, since $\pi$ is front-door and we are not conditioning on anything, $\pi$ must be a direct path (from $\Z^{ch}$ to $\X$), but that would contradict (i) which states there are no back-door paths $\X$ to $\Z$. Hence, $(\Z^{ch} \indep \X)_{G_{\overline{\Z^{ch}(\varnothing)}}}$, as desired.
                \item [(VI)] By Rule 2 of do-calc, $$P_{\Z^{ch}}(\y \mid \x, \z^{nch}) = P(\y \mid \x, \z) \text{ if } (\Z^{ch} \indep \Y \mid \X, \Z^{nch})_{G_{\underline{\Z^{ch}}}}.$$
                Assume, for the sake of contradiction, the existence of a path $\pi$ from $\Z^{ch}$ to $\Y$ in $G_{\underline{\Z^{ch}}}$ given $\X, \Z^{nch}$. Without loss of generality, assume the only node in $\pi$ also in $\Z^{ch}$ is the first node, so that $\pi$ is proper. Then, all internal nodes in $\pi$ are in $\{\X, \Z^{nch}\}$, and since we are conditioning on $\{\X, \Z^{nch}\}$, they therefore must all be colliders. The path $\pi$ must be back-door since we are in $G_{\underline{\Z^{ch}}}$. We infer $\pi$ then must consist of a series of bidirected arcs
                $$\scalebox{0.78}{$\begin{array}{ccccccccccccccccccc}
                    Z^{ch} & \dashedleftrightarrow & Z^{nch} & \dashedleftrightarrow & Z^{nch} & \cdots & \dashedleftrightarrow & X & \dashedleftrightarrow & Z^{nch} & \dashedleftrightarrow & \cdots & \dashedleftrightarrow & Y
                    \end{array}$}$$
                or
                $$\scalebox{0.78}{$\begin{array}{ccccccccccccccccccc}
                    Z^{ch} & \dashedleftrightarrow & Z^{nch} & \dashedleftrightarrow & Z^{nch} & \cdots & \dashedleftrightarrow & X & \dashedleftrightarrow & Z^{nch} & \dashedleftrightarrow & \cdots & \leftarrow & Y
                    \end{array}$}$$
                However, $\pi$ cannot contain a node in $\X$; there would be a back-door path from $\X$ to $\Z$, violating condition (i). Hence, instead $\pi$ is of the form
                $$\scalebox{0.78}{$\begin{array}{ccccccccccccccccccc}
                    Z^{ch} & \dashedleftrightarrow & Z^{nch} & \dashedleftrightarrow & Z^{nch} & \dashedleftrightarrow & \cdots & \dashedleftrightarrow & Y
                    \end{array}$}$$
                or
                $$\scalebox{0.78}{$\begin{array}{ccccccccccccccccccc}
                    Z^{ch} & \dashedleftrightarrow & Z^{nch} & \dashedleftrightarrow & Z^{nch} & \dashedleftrightarrow & \cdots & \leftarrow & Y
                    \end{array}$}$$
                However, $Z^{ch}$ is a child of some $X \in \X$, so if we append $X$ to the start of $\pi$, 
                $$\scalebox{0.78}{$\begin{array}{ccccccccccccccccccc}
                    X & \rightarrow & Z^{ch} & \dashedleftrightarrow & Z^{nch} & \dashedleftrightarrow & Z^{nch} & \dashedleftrightarrow & \cdots & \dashedleftrightarrow & Y
                    \end{array}$}$$
                $$\scalebox{0.78}{$\begin{array}{ccccccccccccccccccc}
                    X & \rightarrow & Z^{ch} & \dashedleftrightarrow & Z^{nch} & \dashedleftrightarrow & Z^{nch} & \dashedleftrightarrow & \cdots & \leftarrow & Y
                    \end{array}$}$$
                we have an open, proper front-door path from $\X$ to $\Y$ conditioned on $\Z$, violating condition (ii). Hence, $\pi$ does not exist and $(\Z^{ch} \indep \Y \mid \X, \Z^{nch})_{G_{\underline{\Z^{ch}}}}$ as desired.
            \end{enumerate}
            Thus, if $\Z$ satisfies conditions (i) and (ii) relative to $\X, \Y$, then
            $$P_\X(\y) = \sum_\z P(\z | \x^*) \sum_\x P(\y | \z,\x)P(\x).$$
    \end{proof}

\section{Proof of \Cref{thm: cond_ii_necessary}} \label{apx: pf_cond_ii_necessary} 
    \begin{proof} 
        \noindent \textbf{Step 0:} We observe from the proof of $\Cref{thm: cond_1_necessary}$ it suffices to show the front-door formula fails on an appropriate subgraph of $G$. More formally,
        \begin{lemma} \label{lem: fdf_fails_in_subgraph_implies_fails_in_supgraph}
            Let $H = (\V' \sqcup \U', \E')$ be a subgraph of $G = (\V \sqcup \U, \E)$. Define $\X' := \X \cap \V'$, $\Y' := \Y \cap \V'$, and $\Z' := \Z \cap \V'$. Suppose $P'$ is a distribution which factorizes according to $H$. If
            \[
                P'_{\X'}(\y') \neq \sum_{\z'} P'(\z' \mid \x'^*) \sum_{\x'} P'(\y' \mid \z', \x') P'(\x'),
            \]
            then there exists a distribution $P$ which factorizes according to $G$ such that
            \[
                P_{\X}(\y) \neq \sum_{\z} P(\z \mid \x^*) \sum_{\x} P(\y \mid \z, \x) P(\x).
            \]
        \end{lemma}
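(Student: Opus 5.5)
The plan is to extend $P'$ from $H$ to all of $G$ by making every vertex and every edge outside $H$ inert, and then check that both sides of the front-door identity for $(\X,\Y,\Z)$ in $G$ collapse to the corresponding sides for $(\X',\Y',\Z')$ in $H$. This is the same device used in the proof of \Cref{thm: cond_1_necessary}.

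Concretely, for each vertex $v \in \V' \sqcup \U'$ keep the kernel $P'(v \mid Pa_H(v))$ and let it ignore the extra parents $Pa_G(v)\setminus Pa_H(v)$. For each vertex $v \notin \V'\sqcup\U'$ draw $v$ independently of everything, from a fixed strictly positive distribution $Q_v$; for instance, take $Q_v$ uniform on a finite domain. The resulting joint $P$ is a product of kernels $P(v\mid Pa_G(v))$, so it factorizes according to $G$. It is strictly positive because $P'$ and the $Q_v$ are. Its marginal on $\V'\sqcup\U'$ is $P'$, and the block $\W := \V\setminus\V'$ of remaining observed variables is independent of $\V'\sqcup\U'$.

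Write $\X = \X'\sqcup\X''$ with $\X''\subseteq \W$, and similarly $\Y''$ and $\Z''$. The key step is to compute both sides under this product structure.

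For the interventional side, the truncated factorization for $\operatorname{do}(\X=\x^*)$ removes the kernels of $\X'$ and $\X''$. The kernels of $H$-vertices never read $\X''$, so
\[
P_\X(\y) = P'_{\X'}(\y')\,\prod_{v\in \Y''} Q_v(y_v).
\]

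For the functional, independence of $\W$ from $\V'$ gives $P(\z\mid \x^*) = P'(\z'\mid \x'^*)\,Q(\z'')$, $P(\x) = P'(\x')\,Q(\x'')$, and $P(\y\mid \z,\x) = P'(\y'\mid \z',\x')\,Q(\y'')$. The sums over $\z''$ and $\x''$ then integrate to one, and the functional equals
\[
Q(\y'')\sum_{\z'}P'(\z'\mid\x'^*)\sum_{\x'}P'(\y'\mid\z',\x')P'(\x').
\]

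Since $Q(\y'')>0$, this common positive factor cancels, and the inequality in $H$ transfers to $G$ for the same values $\x^*$ and $\y$ (with $\y''$ arbitrary). That proves the lemma.

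The main obstacle is bookkeeping rather than substance. First, one must confirm that letting kernels ignore parents is legitimate, that is, that factorization according to $G$ only requires $v$ to be conditionally a function of a subset of $Pa_G(v)$. Second, one must handle the degenerate case where $\X'$, $\Y'$ or $\Z'$ is empty, interpreting empty products and conditionals as $1$. If the paper intends $H$ to carry the same vertex set with fewer edges, as in \Cref{thm: cond_1_necessary}, then the $Q_v$ construction is unnecessary and only the "ignore extra parents" argument is needed.
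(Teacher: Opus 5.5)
Your proof is correct and follows the paper's approach: extend $P'$ to $G$ by letting the kernels of $H$-vertices ignore parents outside $H$ and making every vertex outside $H$ inert, so that both the causal effect and the front-door functional reduce to their counterparts in $H$. The one difference is that the paper pins the new variables to a fixed constant, whereas you draw them independently from strictly positive $Q_v$, with the common factor $Q(\y'')>0$ cancelling. This is a modest improvement. It keeps $P$ strictly positive whenever $P'$ is, as the paper's standing assumption requires, and it keeps all conditionals well defined for arbitrary values of $\x^*$ and $\y$ on the new vertices; the constant extension gets this only by restricting those values, or the domains, to the constant.
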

        \begin{proof}
            Define $G_H := (\V \sqcup \U, \E')$, the graph obtained from $G$ by removing all edges not in $H$. To extend $P'$ to a distribution $P$ over $\V$, we set the value of every new variable $V \in \V \setminus \V'$ to be a fixed constant (e.g., 0), and defining $P$ to agree with $P'$ marginally over $V'$ (i.e., $P(\vv', \uu') := P'(\vv', \uu')$). By construction, $P$ factorizes according to $G_H$ (and hence $P$ also factorizes according to $G$, albeit possibly unfaithfully), and $P(\vv) = P'(\vv')$. 
            
            It is not too difficult to deduce $P_\X(\y) = P'_{\X'}(\y')$, and $\sum_{\z} P(\z \mid \x^*) \sum_{\x} P(\y \mid \z, \x) P(\x) = \sum_{\z'} P'(\z' \mid \x'^*) \sum_{\x'} P'(\y' \mid \z', \x') P'(\x')$.

            Then, $P'_{\X'}(\y') \neq \sum_{\z'} P'(\z' \mid \x'^*) \sum_{\x'} P'(\y' \mid \z', \x') P'(\x')$ implies $\sum_{\z} P(\z \mid \x^*) \sum_{\x} P(\y \mid \z, \x) P(\x) \neq P_\X(\y)$. 

            Thus, if the front-door formula fails on a subgraph of $G$, it fails on $G$.
        \end{proof}
    
        \begin{example}
            For $\Cref{thm: cond_1_necessary}$, the subgraph $H$ was the direct path $\pi: X \rightarrow \cdots \rightarrow Y$.    
        \end{example}
    
        Hence, we reduce the problem to finding subgraphs of $G$ such that the front-door formula fails.
        
        \vspace{6.8pt}
        
        \noindent \textbf{Step 1:}  
            We distinguish three cases:
            \begin{lemma} \label{lem: bad proj cases}
                Let $G' = Pj(G, \X \sqcup \Y \sqcup \Z)$. Condition (ii) (no proper front-door path $\X$ to $\Y$ conditioned on $\Z$) is violated in $G$ if and only if it is violated in $G'$, and if and only if $G'$ contains one of the following subgraphs:
                \begin{enumerate}
                    \item [(a)] $X \rightarrow Y$
                    \item [(b)] $X \rightarrow Z_1 \dashedleftrightarrow Z_2 \dashedleftrightarrow Z_3 \dashedleftrightarrow \cdots \dashedleftrightarrow Z_k \leftarrow Y$ 
                    \item [(c)] $X \rightarrow Z_1 \dashedleftrightarrow Z_2 \dashedleftrightarrow Z_3 \dashedleftrightarrow \cdots \dashedleftrightarrow Z_k \dashedleftrightarrow Y$ 
                \end{enumerate}
                where $k \geq 1$, and for some $X \in \X$, $Y \in \Y$, $Z_1,...,Z_k \in \Z$.
            \end{lemma}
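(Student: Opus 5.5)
The plan is to prove the two equivalences separately. First, that condition (ii) is violated in $G$ iff it is violated in $G'$. Second, that a violation in $G'$ is equivalent to the presence of one of the configurations (a)--(c).

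\textbf{Step A ($G$ versus $G'$).} By Lemma~\ref{lem: latent_proj_properties}(2), d-connection among $\X\sqcup\Y\sqcup\Z$ given subsets of these sets is preserved under projection. Lemma~\ref{lem: latent_proj_properties}(1) says a path in $G$ projects to a path in $G'$ with the same observed vertices and the same arrowheads at them. In particular, the first edge of a projected path still points out of $X$. The key point is that a front-door edge $X\to\cdots$ in $G$ whose next observed vertex is reached through latents or $\W$-nodes projects to a directed edge $X\to V$ in $G'$.

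\begin{itemize}
\item \emph{$G\Rightarrow G'$.} An open proper front-door path in $G$, given $\Z$, can be shortened by standard walk-to-path arguments while keeping it open. Its projection keeps the colliders and non-colliders at observed vertices, and it stays proper since no new $\X$-nodes appear.
\item \emph{$G'\Rightarrow G$.} Each edge of $G'$ is induced by a trek or a direct path in $G$ through non-$\X\sqcup\Y\sqcup\Z$ nodes. Splicing these gives a walk in $G$. A walk d-connecting given $\Z$ yields a path d-connecting given $\Z$. I will need care so that the resulting path still starts with an out-edge from $X$ and avoids other $\X$-nodes. If an intermediate $X'$ appears, I truncate to start there. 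Because the remaining walk is open and every vertex in $\X$ is unconditioned, the truncated path is still of the required kind, although I may need to argue that its first edge still leaves $X'$.
\end{itemize}

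\textbf{Step B (characterisation in $G'$).} The direction ``if'' is immediate. Each of (a)--(c) is proper, starts with $X\to$, has every interior node $Z_i\in\Z$ as a collider (each $Z_i$ has arrowheads on both sides, since $Z_1$ has $X\to Z_1$ and $\leftrightarrow$ next to it), and has no conditioned non-collider. Hence each is open given $\Z$.

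For ``only if'', take an open proper front-door path $\pi: X\to V_1 \cdots Y$ in $G'$. Truncate $\pi$ at its first vertex in $\Y$, so that its interior lies in $\Z$ only; it cannot contain other $\X$-nodes because it is proper. Every interior node is in $\Z$ and is conditioned on, so openness forces every interior node to be a collider. If $\pi$ has no interior node, we are in case (a). Otherwise $V_1=Z_1$, and since $Z_1$ must be a collider, the edge after $Z_1$ has an arrowhead at $Z_1$. Inductively, each subsequent edge must have arrowheads at both its endpoints wherever the endpoint is interior, so all middle edges are bidirected. The final edge into $Y$ needs an arrowhead only at $Z_k$, so it is either $Z_k\leftarrow Y$ (case (b)) or $Z_k\dashedleftrightarrow Y$ (case (c)).

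\textbf{Expected obstacle.} The main obstacle is the $G'\Rightarrow G$ lift in Step A. Ensuring that the lifted path remains proper and front-door, rather than merely d-connecting, requires the truncation argument above. For the truncation I will use the fact that no conditioning set contains $\X$, so a repeated $\X$-vertex on an open walk can be bypassed.
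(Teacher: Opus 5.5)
Your Step B matches the paper's argument. Your truncation at the first $\Y$-vertex also fills a small hole there, since properness alone does not keep $\Y$-vertices out of the interior.

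The genuine gap is the $G\Rightarrow G'$ half of Step A, which the paper also just waves through by citing Lemma~\ref{lem: latent_proj_properties}. An open path in $G$ may have colliders among the projected-out vertices, because they need only lie in $An(\Z)$. A stretch between consecutive vertices of $\X\sqcup\Y\sqcup\Z$ that contains such a collider is neither a trek nor a direct path. It therefore induces no edge of $G'$, and Lemma~\ref{lem: latent_proj_properties}(1) does not apply. To get a d-connecting path in $G'$ you must reroute through a directed path from the collider down to $\Z$, and that directed path can pass through $\X$. Truncating at that $\X$-vertex, as you propose, leaves a first edge pointing \emph{into} it.

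This cannot be patched, because the claim is false. Take $X\to C\leftarrow U\to Y$ and $C\to X_2\to Z_1$, with $\X=\{X,X_2\}$, $\Y=\{Y\}$, $\Z=\{Z_1\}$, and $C,U$ latent. The path $X\to C\leftarrow U\to Y$ is proper, front-door, and open given $Z_1$ (since $C\to X_2\to Z_1$), so (ii) is violated in $G$. But $G'$ has only the edges $X\to X_2$, $X_2\to Z_1$, $X_2\dashedleftrightarrow Y$. It contains none of (a)--(c) and no open proper front-door path; truncating at $X_2$ leaves the back-door edge $X_2\dashedleftrightarrow Y$. In fact $Y\indep Z_1\mid X,X_2$ and $P_\X(y)=P(y)$ here, so the front-door functional holds for every $P$ on this $G$. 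Theorem~\ref{thm: cond_ii_necessary} therefore also fails as literally stated.

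The lemma becomes correct if ``open'' in (ii) is read as d-connection in $G_{\overline{\X}}$, i.e.\ colliders must be ancestors of $\Z$ in $G_{\overline{\X}}$. This is exactly the form $(\X\indep\Y\mid\Z)_{G_{\overline{\X}}}$ used in Step III of the main proof. Since $Pj(G_{\overline{\X}},\X\sqcup\Y\sqcup\Z)=G'_{\overline{\X}}$, Lemma~\ref{lem: latent_proj_properties}(2) then gives the $G\Leftrightarrow G'$ equivalence directly. In $G'$, cutting an open path of $G'_{\overline{\X}}$ at its last $\X$-vertex and its first subsequent $\Y$-vertex yields precisely the paths of your Step B.

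Your $G'\Rightarrow G$ lift is sound, but the worry about an intermediate $X'$ is moot: lifted stretches avoid $\X\sqcup\Y\sqcup\Z$. What does need checking is that a repeated latent vertex which becomes a collider after shortcutting lies in $An(\Z)$. It does, because such a vertex lies on at least two lifted stretches. On every stretch except the $Y$-side branch of a final trek $Z_k\dashedleftrightarrow Y$, each vertex is an ancestor of some $Z_i$.
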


            \begin{proof}
                The first part of the lemma, that (ii) is violated in $G$ if and only if it is violated in $G'$, follows from properties of latent projection (Lemma~\ref{lem: latent_proj_properties}).  
                
                The second part proceeds as follows: Suppose (ii) is violated in $G'$. Then there exists an open, proper, front-door path 
                $\pi$ from some $X \in \X$ to some $Y \in \Y$ given $\Z$. If there are no interior nodes, then we obtain graph (a). Otherwise, because $\pi$ is proper, no interior node of $\pi$ lies in $\X$, so all of its interior nodes instead lie in $\Z$. Then, because we are conditioning on $\Z$, every interior node of $\pi$ must be a collider. This forces the subpath through $\Z$ to consist solely of bidirected edges, with arrowheads meeting at each $Z_i$. The final interior node $Z_k$, as a collider, must connect to $Y$ either via an incoming edge $Z_k \leftarrow Y$ or a bidirected edge $Z_k \dashedleftrightarrow Y$. This yields (b) and (c) respectively.
    
                Conversely, the subgraphs (a), (b), (c) are clearly open, front-door paths from $\X$ to $\Y$ when conditioned on $\Z$. Therefore, if $G'$ contains such a subgraph, (ii) is violated.
            \end{proof}
    
            Case (a) is easily resolved as follows.
    
            \begin{lemma} \label{lem: x -> y implies dir path x to y}
                Let $G' = Pj(G, \X \sqcup \Y \sqcup \Z)$ be defined as in the previous lemma.  
                If $G'$ contains a subgraph of the form $X \to Y$ with $X \in \X$, $Y \in \Y$,  
                then $G$ contains a direct path from $\X$ to $\Y$ with no interior node in $\Z$.  
            \end{lemma}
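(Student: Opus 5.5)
The plan is to unwind the definition of latent projection (Definition~\ref{def: latent_projection}) applied to $G' = Pj(G, \X \sqcup \Y \sqcup \Z)$. In this projection the observed set is $\X \sqcup \Y \sqcup \Z$. All other vertices of $G$ are treated as latent, namely $\U$ together with $\W = \V \setminus (\X \sqcup \Y \sqcup \Z)$. A directed edge $X \to Y$ in $G'$ can arise in only two ways. Either it is an edge of $G$ between observed vertices, which survives step (1) of the construction. Or it was added in step (3), in which case it is induced by a direct path $X \to \cdots \to Y$ in $G$ whose interior nodes all lie in $\U \sqcup \W$.

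I would handle the two origins in turn. If $X \to Y$ is already an edge of $G$, it is itself a direct path from $\X$ to $\Y$ with no interior nodes, so vacuously none of them lie in $\Z$. If instead the edge is induced, take the inducing direct path $\rho: X \to V_1 \to \cdots \to V_m \to Y$ in $G$. Each interior node $V_i$ is, by construction, a vertex projected out. Hence $V_i \notin \X \sqcup \Y \sqcup \Z$, and in particular $V_i \notin \Z$. So $\rho$ is the required direct path from $\X$ to $\Y$ avoiding $\Z$.

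The only real subtlety is bookkeeping. The relevant projection marginalizes out not just $\U$ but also the observed variables $\W$. We therefore need the reading of Definition~\ref{def: latent_projection} in which "latent" means "not in the target set $\X \sqcup \Y \sqcup \Z$"; this is exactly how the definition is applied in Step~0 of the proof of \Cref{thm: main} and in Lemma~\ref{lem: bad proj cases}. I would state this convention explicitly and cite Lemma~\ref{lem: latent_proj_properties}(1), which says edges of $G'$ correspond to paths in $G$ with the same observed endpoints and arrow orientation.

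As a closing remark, the conclusion puts us exactly in the hypothesis of \Cref{thm: cond_1_necessary}. That theorem then supplies a distribution violating the front-door functional, which disposes of case (a) of Lemma~\ref{lem: bad proj cases}. I do not expect any genuine obstacle, since the argument is a direct consequence of the projection construction.
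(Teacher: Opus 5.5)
Your proposal is correct and is essentially the paper's argument: the paper only says the lemma ``follows directly from the construction of latent projections (Definition~\ref{def: latent_projection}),'' and your case split is exactly what that remark unwinds to. Either $X \to Y$ is already an edge of $G$, or it is added in step (3), induced by a direct path whose interior nodes all lie outside $\X \sqcup \Y \sqcup \Z$ and hence outside $\Z$. The appeal to Lemma~\ref{lem: latent_proj_properties}(1) is unnecessary, since that lemma goes from paths of $G$ to paths of $G'$ rather than the direction you need; the definition alone suffices.
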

            
            \begin{proof}
                This follows directly from the construction of latent projections (Definition~\ref{def: latent_projection}).
            \end{proof}

            \begin{lemma}
                Let $G' = Pj(G, \X \sqcup \Y \sqcup \Z)$ be defined as in the previous lemmas.  
                If $G'$ contains a subgraph of the form $X \to Y$ with $X \in \X$, $Y \in \Y$, then there exists a distribution $P(\vv)$ such that
                \[
                    P_\X(\y) \;\neq\; \sum_\z P(\z \mid \x^*) \sum_\x P(\y \mid \z,\x)\,P(\x).
                \]
            \end{lemma}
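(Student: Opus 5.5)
The plan is to reduce directly to \Cref{thm: cond_1_necessary}. By Lemma~\ref{lem: x -> y implies dir path x to y}, the edge $X \to Y$ in $G'$ is induced by a direct path $\pi: X \to \cdots \to Y$ in $G$ whose interior nodes all lie outside $\X \sqcup \Y \sqcup \Z$ (they were projected away). In particular no interior node of $\pi$ lies in $\Z$, so $\pi$ is a direct path from $\X$ to $\Y$ not blocked by $\Z$. This is exactly the hypothesis of \Cref{thm: cond_1_necessary}, which supplies a distribution factorizing according to $G$ for which the front-door functional fails.

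To make the argument self-contained in the language of this section, I would phrase it through Lemma~\ref{lem: fdf_fails_in_subgraph_implies_fails_in_supgraph}. Take $H$ to be the subgraph of $G$ consisting of the nodes and edges of $\pi$ only, with $\X' = \{X\}$, $\Y' = \{Y\}$ and $\Z' = \varnothing$. Let $P'$ be a strictly positive distribution faithful to the chain $\pi$ (for instance, binary variables, each copying its parent with probability $3/4$). In $H$ there are no back-door paths, so Rule 2 gives $P'_X(y) = P'(y \mid x^*)$. Meanwhile, since $\Z'$ is empty, the functional collapses to $\sum_x P'(y \mid x) P'(x) = P'(y)$. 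Faithfulness gives $X \not\indep Y$, so there is some $x^*, y$ with $P'(y\mid x^*) \neq P'(y)$. Lemma~\ref{lem: fdf_fails_in_subgraph_implies_fails_in_supgraph} then lifts the failure to some $P$ factorizing according to $G$.

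The main obstacle is the bookkeeping behind the lifting step and the strict-positivity standing assumption. Setting the extra variables $V \in \V \setminus \V'$ to constants, as in Lemma~\ref{lem: fdf_fails_in_subgraph_implies_fails_in_supgraph}, breaks strict positivity. To handle this, I would instead give each extra variable an independent uniform distribution, mimicking the proof of \Cref{thm: cond_1_necessary}. The other variables of $\X$, $\Y$ and $\Z$ are then independent of everything on $\pi$. Both sides therefore factor into the $\pi$-part times the marginal of these independent variables, and the inequality is preserved.
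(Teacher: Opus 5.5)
Your proposal is correct and matches the paper's proof: use Lemma~\ref{lem: x -> y implies dir path x to y} to obtain a direct path from $\X$ to $\Y$ with no interior node in $\Z$, then apply \Cref{thm: cond_1_necessary}. Your alternative phrasing through Lemma~\ref{lem: fdf_fails_in_subgraph_implies_fails_in_supgraph} is also sound. The constant extension in that lemma does break strict positivity, and your repair is correct: making the off-path variables independent uniforms multiplies both sides by the same positive marginal, and this is essentially the construction already used in the proof of \Cref{thm: cond_1_necessary}.
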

            \begin{proof}
                By \Cref{lem: x -> y implies dir path x to y}, $G$ violates condition~(1). The desired result follows from \Cref{thm: cond_1_necessary}.
            \end{proof}
            For cases (b) and (c), we proceed as follows. First, in Steps 2-6, we construct a probability distribution $P'$ on the subgraph (b) (or (c)) for which the front-door formula fails. Next, in Steps 7-10, we show that $P'$ can be ``lifted'' to a positive distribution $P$ on the pre-image of (b) (or (c)) in $G$. We may then conclude by Lemma~\ref{lem: fdf_fails_in_subgraph_implies_fails_in_supgraph} that the front-door formula fails for some extension of $P$ in the full graph $G$.
        \vspace{6.8pt}
        
        \noindent \textbf{Step 2: Counterexample model on (b).}
            \begin{center}
            \scalebox{0.8}{
            % https://q.uiver.app/#q=WzAsMTMsWzEsMSwiWl8xIl0sWzMsMSwiWl8yIl0sWzUsMSwiWl8zIl0sWzAsMCwiWD1VXzAiXSxbMiwwLCJVXzEiXSxbNCwwLCJVXzIiXSxbNiwwLCJVXzMiXSxbMTAsMCwiVV97ay0xfSJdLFsxMSwxLCJaX2siXSxbMTIsMCwiWSA9IFVfayJdLFs5LDEsIlxcdGV4dGNvbG9ye3doaXRlfXs6M30iXSxbOCwxLCIuLi4iXSxbNywxLCJcXHRleHRjb2xvcnt3aGl0ZX17OjN9Il0sWzMsMF0sWzQsMCwiIiwxLHsic3R5bGUiOnsiYm9keSI6eyJuYW1lIjoiZGFzaGVkIn19fV0sWzQsMSwiIiwxLHsic3R5bGUiOnsiYm9keSI6eyJuYW1lIjoiZGFzaGVkIn19fV0sWzUsMSwiIiwxLHsic3R5bGUiOnsiYm9keSI6eyJuYW1lIjoiZGFzaGVkIn19fV0sWzUsMiwiIiwxLHsic3R5bGUiOnsiYm9keSI6eyJuYW1lIjoiZGFzaGVkIn19fV0sWzYsMiwiIiwxLHsic3R5bGUiOnsiYm9keSI6eyJuYW1lIjoiZGFzaGVkIn19fV0sWzcsOCwiIiwxLHsic3R5bGUiOnsiYm9keSI6eyJuYW1lIjoiZGFzaGVkIn19fV0sWzksOF0sWzcsMTAsIiIsMSx7InN0eWxlIjp7ImJvZHkiOnsibmFtZSI6ImRhc2hlZCJ9fX1dLFs2LDEyLCIiLDEseyJzdHlsZSI6eyJib2R5Ijp7Im5hbWUiOiJkYXNoZWQifX19XV0=
            \begin{tikzcd}[ampersand replacement=\&,column sep=small,row sep=scriptsize]
                {X=U_0} \&\& {U_1} \&\& {U_2} \&\& {U_3} \&\&\&\& {U_{k-1}} \&\& {Y = U_k} \\
                \& {Z_1} \&\& {Z_2} \&\& {Z_3} \&\& {\textcolor{white}{:3}} \& {...} \& {\textcolor{white}{:3}} \&\& {Z_k}
                \arrow[from=1-1, to=2-2]
                \arrow[dashed, from=1-3, to=2-2]
                \arrow[dashed, from=1-3, to=2-4]
                \arrow[dashed, from=1-5, to=2-4]
                \arrow[dashed, from=1-5, to=2-6]
                \arrow[dashed, from=1-7, to=2-6]
                \arrow[dashed, from=1-7, to=2-8]
                \arrow[dashed, from=1-11, to=2-10]
                \arrow[dashed, from=1-11, to=2-12]
                \arrow[from=1-13, to=2-12]
            \end{tikzcd}
            }
            \end{center}
            Let $U_0, \ldots, U_k$ be i.i.d. fair coins $\mathcal{U}(\{0,1\})$, where $$X = U_0, Y= U_k$$
            and for each $i = 1, \ldots, k$ define
            $$Z_i \sim \text{ Uniform on the multiset $\left[0,1, U_{i-1}, U_i\right]$}.$$
            We define $P'$ to be the probability distribution over the random variables $X, Y, \Z$.
        \vspace{6.8pt}
        
        \noindent \textbf{Step 3: Matrix product representation of $P'$}:
            For $a, b \in \{0,1\}$
                $$P'(Z_i = 0 \mid U_{i-1} = a, U_{i} = b) = \frac{3 - a - b}{4}, \quad P'(Z_i = 1 \mid U_{i-1} = a, U_{i} = b) = \frac{1 + a + b}{4}.$$
            We define the following $2 \times 2$ ``transition matrices'' with rows indexed by $a$, columns indexed by $b$,
            \begin{align*}
                M_0 &:= 
                \begin{pmatrix}
                    P'(Z_i = 0 \mid U_{i-1} = 0, U_i = 0) & P'(Z_i = 0 \mid U_{i-1} = 0, U_i = 1) \\
                    P'(Z_i = 0 \mid U_{i-1} = 1, U_i = 0) & P'(Z_i = 0 \mid U_{i-1} = 1, U_i = 1)
                \end{pmatrix}
                = \begin{pmatrix}
                    \frac{3}{4} & \frac{1}{2} \\
                    \frac{1}{2} & \frac{1}{4}
                \end{pmatrix} \\
                M_1 &:= 
                \begin{pmatrix}
                    P'(Z_i = 1 \mid U_{i-1} = 0, U_i = 0) & P'(Z_i = 1 \mid U_{i-1} = 0, U_i = 1) \\
                    P'(Z_i = 1 \mid U_{i-1} = 1, U_i = 0) & P'(Z_i = 1 \mid U_{i-1} = 1, U_i = 1)
                \end{pmatrix}
                = \begin{pmatrix}
                    \frac{1}{4} & \frac{1}{2} \\
                    \frac{1}{2} & \frac{3}{4}
                \end{pmatrix}
            \end{align*}
            with determinants $\det(M_0) = \det(M_1) = -\frac{1}{16}$.
            
            For a k-tuple $\z = (z_1, \ldots, z_{k})$, we define the product
                $$A(\z) = M_{z_1} \cdots M_{z_k}$$
            and let $e_0 = \left(\substack{1 \\ 0}\right)$, $e_1 = \left(\substack{0 \\ 1}\right)$, $\mathbf{1} = \left(\substack{1 \\ 1}\right)$.
        
            \begin{lemma} The probability distribution $P'$ may be expressed as
                $$P'(X = a, Y = b, \Z = \z) = 2^{-(k+1)}e_a^T A(\z)e_b.$$
            \end{lemma}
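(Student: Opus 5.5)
The plan is to write down the joint law of all the variables $U_0,\dots,U_k, Z_1,\dots,Z_k$ from the factorization in Step 2. Then marginalize the interior latents $U_1,\dots,U_{k-1}$, and recognize the resulting nested sum as a matrix product.

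\textbf{Factorization.} Since $U_0,\dots,U_k$ are i.i.d.\ fair coins and each $Z_i$ depends only on $(U_{i-1},U_i)$, the model of Step 2 factorizes as
\[
P'(u_0,\dots,u_k,z_1,\dots,z_k) = 2^{-(k+1)} \prod_{i=1}^k P'(Z_i=z_i \mid U_{i-1}=u_{i-1}, U_i=u_i).
\]
By the definition of $M_0$ and $M_1$, each factor in this product equals the matrix entry $(M_{z_i})_{u_{i-1},u_i}$.

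\textbf{Setting the endpoints.} We fix $X=U_0=a$ and $Y=U_k=b$, and sum over $u_1,\dots,u_{k-1}\in\{0,1\}$. This gives
\[
P'(X=a,Y=b,\Z=\z) = 2^{-(k+1)} \sum_{u_1,\dots,u_{k-1}} (M_{z_1})_{a,u_1}(M_{z_2})_{u_1,u_2}\cdots (M_{z_k})_{u_{k-1},b}.
\]

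\textbf{Recognizing the matrix product.} By the definition of matrix multiplication, the sum above is exactly the $(a,b)$ entry of $M_{z_1}\cdots M_{z_k}=A(\z)$. That entry equals $e_a^T A(\z) e_b$. The degenerate case $k=1$ has no interior latents to sum over, and the identity holds directly since $A(\z)=M_{z_1}$.

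\textbf{Where care is needed.} There is no real obstacle; the only bookkeeping is the prior weight. The prior contributes a factor $\tfrac12$ for each of the $k+1$ coins $U_0,\dots,U_k$, including the endpoint coins $X=U_0$ and $Y=U_k$ that are observed rather than summed. This is why the normalizing constant is $2^{-(k+1)}$.

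As a sanity check, summing over all $a,b,\z$ gives $2^{-(k+1)}\mathbf 1^T (M_0+M_1)^k \mathbf 1$. Since $M_0+M_1$ is the all-$\tfrac12$ matrix $\tfrac12\mathbf 1\mathbf 1^T$, we have $(M_0+M_1)^k=\tfrac12\mathbf 1\mathbf 1^T$. Hence the total is $2^{-(k+1)}\cdot\tfrac12\cdot 4 = 1$, confirming that the normalization is right.
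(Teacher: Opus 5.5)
Your derivation is correct and matches the paper's argument. The paper says only to expand over $U_0,\dots,U_k$ and induct on $k$, and your identification of the sum over $u_1,\dots,u_{k-1}$ with the $(a,b)$ entry of $M_{z_1}\cdots M_{z_k}$ is that same computation, with the induction built into the definition of a $k$-fold matrix product.

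Your closing sanity check, however, contains two errors. First, $M_0$ and $M_1$ hold complementary conditional probabilities, so $M_0+M_1=\mathbf 1\mathbf 1^T$, not $\tfrac12\mathbf 1\mathbf 1^T$. Second, your expression $2^{-(k+1)}\cdot\tfrac12\cdot 4$ equals $2^{-k}$, not $1$. The correct check is $(M_0+M_1)^k=2^{k-1}\mathbf 1\mathbf 1^T$, which gives a total of $2^{-(k+1)}\cdot 2^{k-1}\cdot 4=1$. This does not affect the proof of the lemma, but you should fix the check or drop it.
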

            \begin{proof}
                Expand over $U_0, \ldots, U_k$ and use induction over $k$.
            \end{proof}
        \vspace{6.8pt}
        
        \noindent \textbf{Step 4: Target difference.} 
            Arbitrarily, we choose to verify $$P'_X(y) \neq \sum_\z P'(\z \mid \x^*) \sum_\x P'(\y \mid \x, \z) P(\x)$$ for the values $x^* = 0, y = 0$. We define the front-door expression at these values by
            $$S := \sum_\z P'(\z \mid x^* = 0) \sum_{x} P'(y  = 0\mid \z, x)P'(x)$$
            
            We define the shorthand
            $$r_a(\z) := P'(\z \mid x = a) = 2^{-k}e_a^TA(\z)\mathbf{1}, \quad q_a (\z) := P'(y = 0 \mid \z, x = a) = \frac{e_a^TA(\z)e_0}{e_a^TA(\z)\mathbf{1}}.$$
            Since $P'(x) = \frac{1}{2}$ is fair, we may factor it out of $S$. Substituting the definitions of $r_a$ and $q_a$, we obtain
            \begin{align*}
                S = \frac{1}{2}\sum_\z r_0(\z) \left(q_0(\z) + q_1(\z)\right)
            \end{align*}

            Meanwhile, by Rule 3 of do-calc and a short expansion,
            \begin{align*}
                P'_X(y) &= P'(y = 0) \\
                &= \frac{1}{2}\sum_\z \left(r_0(\z)q_0(\z) + r_1(\z)q_1(\z)\right).
            \end{align*}
            Subtracting,
            \begin{align*}
                S - P'_X(y) &= \frac{1}{2}\sum_\z\left(r_0(\z)q_1(\z) - r_1(\z)q_1(\z) \right) 
            \end{align*}
            The goal is to show this difference is non-zero.
        \vspace{6.8pt}
        
        \noindent \textbf{Step 5: Bit-flip symmetries and pairings.} 
            Let $\overline{\z}$ denote the inverse of $\z$ (i.e., $\z$ with every bit flipped). By the symmetries of our model,
            $$r_0(\overline{\z}) = r_1(\z), \quad q_0(\overline{\z}) = 1 - q_1(\z)$$
            Then, by pairing each $\z$ with $\overline{\z}$ and applying these symmetries,
            \begin{align*}
                S - P'_X(y) &= \frac{1}{4}\sum_\z \left(r_0(\z)q_1(\z) - r_1(\z)q_1(\z) + r_0(\overline{\z})q_1(\overline{\z}) - r_1(\overline{\z})q_1(\overline{\z}) \right) \\
                &= \frac{1}{4} \sum_\z \left(r_0(\z) - r_1(\z)\right) \left(q_0(\z) + q_1(\z) - 1\right)
            \end{align*}
            Define $E(\z) = \left(r_0(\z) - r_1(\z)\right) \left(q_0(\z) + q_1(\z) - 1\right)$. Then, rewrite
            $$S - P'_X(y) = \frac{1}{4} \sum_\z E(\z).$$
            We conduct a second pairing, this time by flipping only the last bit. For $\z = \left(z_1, ..., z_{k-1}, 0 \right)$, let $\z' = \left(z_1, ..., z_{k-1}, 1 \right)$. 

            We define $$A := M_{z_1} \cdots M_{z_{k-1}} = \begin{pmatrix} a_{00} & a_{01} \\ a_{10} & a_{11}\end{pmatrix}.$$ Then, $A(\z) = AM_0, A(\z') = AM_1$. By direct computation,
            \begin{align*}
                r_0(\z) &= 2^{-k}e_0^T(AM_0)\mathbf{1} = 2^{-(k + 2)}(5a_{00} + 3a_{01}) \\
                r_1(\z) &= 2^{-k}e_1^T(AM_0)\mathbf{1} = 2^{-(k + 2)}(5a_{10} + 3a_{11}) \\
                r_0(\z') &= 2^{-k}e_0^T(AM_1)\mathbf{1} = 2^{-(k + 2)}(3a_{00} + 5a_{01}) \\
                r_1(\z') &= 2^{-k}e_1^T(AM_1)\mathbf{1} = 2^{-(k + 2)}(3a_{10} + 5a_{11}) \\
                q_0(\z) &= \frac{e_0^T(AM_0)e_0}{e_0^T(AM_0)\mathbf{1}} = \frac{3a_{00} + 2a_{01}}{5a_{00} + 3a_{01}} \\
                q_1(\z) &= \frac{e_1^T(AM_0)e_0}{e_1^T(AM_0)\mathbf{1}} = \frac{3a_{10} + 2a_{11}}{5a_{10} + 3a_{11}} \\
                q_0(\z') &= \frac{e_0^T(AM_1)e_0}{e_0^T(AM_0)\mathbf{1}} = \frac{a_{00} + 2a_{01}}{3a_{00} + 5a_{01}} \\
                q_1(\z') &= \frac{e_1^T(AM_1)e_0}{e_1^T(AM_0)\mathbf{1}} = \frac{a_{10} + 2a_{11}}{3a_{10} + 5a_{11}} 
            \end{align*}
            %https://www.wolframalpha.com/input?i2d=true&i=%5C%2840%295%5C%2840%29a-c%5C%2841%29%2B3%5C%2840%29b-d%5C%2841%29%5C%2841%29+*+%5C%2840%295ac%2B4bc%2B4ad%2B3bd%5C%2841%29+Divide%5B1%2C+%5C%2840%295a%2B3b%5C%2841%29%5C%2840%295c+%2B+3d%5C%2841%29%5D++-+%5C%2840%295%5C%2840%29b-d%5C%2841%29%2B3%5C%2840%29a-c%5C%2841%29%5C%2841%29+*+%5C%2840%295bd%2B4bc%2B4ad%2B3ac%5C%2841%29+Divide%5B1%2C%5C%2840%293a%2B5b%5C%2841%29%5C%2840%293c+%2B+5d%5C%2841%29%5D
            By the power invested in WolframAlpha, the pair sum is of the form
            $$E(\z) + E(\z') = 2^{-k+1} \det (A) \Pi(A)$$
            where 
            \begin{align*}
                \Pi(A) =& \frac{1}{(5 a_{00} + 3 a_{01}) (3 a_{00} + 5 a_{01}) (5 a_{10} + 3 a_{11}) (3 a_{10} + 5 a_{11})} \\ &\cdot \bigg(15 a_{00}^2 a_{10} + 15 a_{00}^2 a_{11} + 34 a_{00} a_{01} a_{10} + 34 a_{00} a_{01} a_{11}+ 15 a_{00} a_{10}^2 + 34 a_{00} a_{10} a_{11} \\ &+ 15 a_{00} a_{11}^2 + 15 a_{01}^2 a_{10} + 15 a_{01}^2 a_{11} + 15 a_{01} a_{10}^2 + 34 a_{01} a_{10} a_{11} + 15 a_{01} a_{11}^2\bigg)
            \end{align*}
            Since every coefficient in $\Pi(A)$ is positive and $A$ itself has positive entries (being a product of matrices with positive entries), thus $\Pi(A) > 0$.

            By multiplicativity of determinants,
            $$\det (A) = \prod_{i=1}^{k-1}\det(M_{z_i}) = \left(-\frac{1}{16}\right)^{k-1}$$
            and hence $$\operatorname{sgn} \det(A) = (-1)^{k-1} \quad \operatorname{sgn} \left(E(\z) + E(\z')\right) = (-1)^{k-1}.$$ 
            Therefore,
            \begin{align*}
                S - P'_X(y) &= \frac{1}{4}\sum_\z E(\z) \\
                &= \frac{1}{4} \sum_{\text{pairs }(\z,\z')} \left(E(\z) + E(\z')\right)\\
                &= (-1)^{k-1} \cdot \text{ positive constant} \\
                &\neq 0    
            \end{align*}
            as desired.
            
        \vspace{6.8pt}
        \noindent \textbf{Step 6: Counterexample model on (c) from (b).} 
            \begin{center}
            \scalebox{0.78}{
            % https://q.uiver.app/#q=WzAsMTQsWzEsMSwiWl8xIl0sWzMsMSwiWl8yIl0sWzUsMSwiWl8zIl0sWzAsMCwiWD1VXzAiXSxbMiwwLCJVXzEiXSxbNCwwLCJVXzIiXSxbNiwwLCJVXzMiXSxbMTAsMCwiVV97ay0xfSJdLFsxMSwxLCJaX2siXSxbMTIsMCwiVV9rIl0sWzksMSwiXFx0ZXh0Y29sb3J7d2hpdGV9ezozfSJdLFs4LDEsIi4uLiJdLFs3LDEsIlxcdGV4dGNvbG9ye3doaXRlfXs6M30iXSxbMTMsMSwiWSJdLFszLDBdLFs0LDAsIiIsMSx7InN0eWxlIjp7ImJvZHkiOnsibmFtZSI6ImRhc2hlZCJ9fX1dLFs0LDEsIiIsMSx7InN0eWxlIjp7ImJvZHkiOnsibmFtZSI6ImRhc2hlZCJ9fX1dLFs1LDEsIiIsMSx7InN0eWxlIjp7ImJvZHkiOnsibmFtZSI6ImRhc2hlZCJ9fX1dLFs1LDIsIiIsMSx7InN0eWxlIjp7ImJvZHkiOnsibmFtZSI6ImRhc2hlZCJ9fX1dLFs2LDIsIiIsMSx7InN0eWxlIjp7ImJvZHkiOnsibmFtZSI6ImRhc2hlZCJ9fX1dLFs3LDgsIiIsMSx7InN0eWxlIjp7ImJvZHkiOnsibmFtZSI6ImRhc2hlZCJ9fX1dLFs5LDgsIiIsMSx7InN0eWxlIjp7ImJvZHkiOnsibmFtZSI6ImRhc2hlZCJ9fX1dLFs3LDEwLCIiLDEseyJzdHlsZSI6eyJib2R5Ijp7Im5hbWUiOiJkYXNoZWQifX19XSxbNiwxMiwiIiwxLHsic3R5bGUiOnsiYm9keSI6eyJuYW1lIjoiZGFzaGVkIn19fV0sWzksMTMsIiIsMSx7InN0eWxlIjp7ImJvZHkiOnsibmFtZSI6ImRhc2hlZCJ9fX1dXQ==
            \begin{tikzcd}[ampersand replacement=\&,column sep=small,row sep=scriptsize]
                {X=U_0} \&\& {U_1} \&\& {U_2} \&\& {U_3} \&\&\&\& {U_{k-1}} \&\& {U_k} \\
                \& {Z_1} \&\& {Z_2} \&\& {Z_3} \&\& {\textcolor{white}{:3}} \& {...} \& {\textcolor{white}{:3}} \&\& {Z_k} \&\& Y
                \arrow[from=1-1, to=2-2]
                \arrow[dashed, from=1-3, to=2-2]
                \arrow[dashed, from=1-3, to=2-4]
                \arrow[dashed, from=1-5, to=2-4]
                \arrow[dashed, from=1-5, to=2-6]
                \arrow[dashed, from=1-7, to=2-6]
                \arrow[dashed, from=1-7, to=2-8]
                \arrow[dashed, from=1-11, to=2-10]
                \arrow[dashed, from=1-11, to=2-12]
                \arrow[dashed, from=1-13, to=2-12]
                \arrow[dashed, from=1-13, to=2-14]
            \end{tikzcd}
            }
            \end{center}
            Let $U_0, ..., U_k \sim \mathcal{U}(\{0,1\})$, $Z_i \sim \mathcal{U}(\left[0, 1, Pa(Z_i)\right])$ as in the model for (b). Let $Y:= U_k$. Then, the observed distribution $P'(x, y, z_1,...,z_k)$ and interventional distribution $P'_\X(\y) = P'(y)$ are as they were in (b). Therefore, the front-door formula fails in (c).
        \vspace{6.8pt}
        
        \noindent\textbf{Interlude: Idea of Steps 7--9.}
            Lemma~\ref{lem: bad proj cases} states (ii) is violated if and only if the latent projection of $G$ onto $\X \sqcup \Y \sqcup\Z$, denoted $G'$, contains a subgraph of the form (a), (b), or (c). 

            The graph $G$ itself does not necessarily contain one of (a), (b), or (c). Only its projection is required to. Thus, it remains to show that the distribution $P'$ constructed on $G'$ can be lifted to a \emph{positive} distribution $P$ on $G$. 
            
            The following lemma describes the general form of a pre-image in $G$ of a subgraph of type (b). An analogous lemma for subgraphs of type (c) can be found in Step 10.
            \begin{lemma}\label{lem: preimage of b}
            Let $G' = Pj(G, \X \sqcup \Y \sqcup \Z)$. If $G'$ contains a subgraph of the form (b), then $G$ contains a subgraph of the form
            \begin{center}
                \scalebox{0.51}{
                % https://q.uiver.app/#q=WzAsMzYsWzAsMCwiWD1VXzAiXSxbMSwxLCJTIl0sWzIsMiwiUyJdLFs2LDAsIlVfMSJdLFs3LDEsIlMiXSxbOCwyLCJTIl0sWzksMywiVCJdLFs5LDQsIlQiXSxbOSw1LCJUIl0sWzksNiwiWl8yIl0sWzEwLDIsIlMiXSxbMTEsMSwiUyJdLFsxMiwwLCJVXzIiXSxbMTMsMSwiUyJdLFsxNCwyLCJTIl0sWzE1LDNdLFsxNiwzLCJcXGNkb3RzIl0sWzE4LDIsIlMiXSxbMTcsM10sWzE5LDEsIlMiXSxbMjAsMCwiVV97ay0xfSJdLFsyMSwxLCJTIl0sWzIyLDIsIlMiXSxbMjMsMywiVCJdLFsyMyw0LCJUIl0sWzIzLDUsIlQiXSxbMjMsNiwiWl9rIl0sWzI0LDIsIlMiXSxbMjUsMSwiUyJdLFsyNiwwLCJZID0gVV9rIl0sWzMsNiwiWl8xIl0sWzMsNSwiVCJdLFszLDQsIlQiXSxbNSwxLCJTIl0sWzQsMiwiUyJdLFszLDMsIlQiXSxbMCwxXSxbMSwyLCJcXGRkb3RzIiwxXSxbMyw0XSxbNCw1LCJcXGRkb3RzIiwxXSxbNSw2XSxbNiw3XSxbNyw4LCJcXHZkb3RzIiwxXSxbOCw5XSxbMTIsMTFdLFsxMiwxM10sWzEzLDE0LCJcXGRkb3RzIiwxXSxbMTQsMTVdLFsxNywxOF0sWzIwLDE5XSxbMjAsMjFdLFsyMSwyMiwiXFxkZG90cyIsMV0sWzIyLDIzXSxbMTksMTcsIlxcaWRkb3RzIiwxXSxbMjMsMjRdLFsyNCwyNSwiXFx2ZG90cyIsMV0sWzI1LDI2XSxbMjcsMjNdLFsyOCwyNywiXFxpZGRvdHMiLDFdLFszMSwzMF0sWzMyLDMxLCJcXHZkb3RzIiwxXSxbMywzM10sWzMzLDM0LCJcXGlkZG90cyIsMV0sWzIsMzVdLFszNSwzMl0sWzM0LDM1XSxbMTEsMTAsIlxcaWRkb3RzIiwxXSxbMTAsNl0sWzI5LDI4XV0=
                \begin{tikzcd}[ampersand replacement=\&,sep=tiny]
                    {X=U_0} \&\&\&\&\&\& {U_1} \&\&\&\&\&\& {U_2} \&\&\&\&\&\&\&\& {U_{k-1}} \&\&\&\&\&\& {Y = U_k} \\
                    \& S \&\&\&\& S \&\& S \&\&\&\& S \&\& S \&\&\&\&\&\& S \&\& S \&\&\&\& S \\
                    \&\& S \&\& S \&\&\&\& S \&\& S \&\&\&\& S \&\&\&\& S \&\&\&\& S \&\& S \\
                    \&\&\& T \&\&\&\&\&\& T \&\&\&\&\&\& {} \& \cdots \& {} \&\&\&\&\&\& T \\
                    \&\&\& T \&\&\&\&\&\& T \&\&\&\&\&\&\&\&\&\&\&\&\&\& T \\
                    \&\&\& T \&\&\&\&\&\& T \&\&\&\&\&\&\&\&\&\&\&\&\&\& T \\
                    \&\&\& {Z_1} \&\&\&\&\&\& {Z_2} \&\&\&\&\&\&\&\&\&\&\&\&\&\& {Z_k}
                    \arrow[from=1-1, to=2-2]
                    \arrow[from=1-7, to=2-6]
                    \arrow[from=1-7, to=2-8]
                    \arrow[from=1-13, to=2-12]
                    \arrow[from=1-13, to=2-14]
                    \arrow[from=1-21, to=2-20]
                    \arrow[from=1-21, to=2-22]
                    \arrow[from=1-27, to=2-26]
                    \arrow["\ddots"{description}, from=2-2, to=3-3]
                    \arrow["\iddots"{description}, from=2-6, to=3-5]
                    \arrow["\ddots"{description}, from=2-8, to=3-9]
                    \arrow["\iddots"{description}, from=2-12, to=3-11]
                    \arrow["\ddots"{description}, from=2-14, to=3-15]
                    \arrow["\iddots"{description}, from=2-20, to=3-19]
                    \arrow["\ddots"{description}, from=2-22, to=3-23]
                    \arrow["\iddots"{description}, from=2-26, to=3-25]
                    \arrow[from=3-3, to=4-4]
                    \arrow[from=3-5, to=4-4]
                    \arrow[from=3-9, to=4-10]
                    \arrow[from=3-11, to=4-10]
                    \arrow[from=3-15, to=4-16]
                    \arrow[from=3-19, to=4-18]
                    \arrow[from=3-23, to=4-24]
                    \arrow[from=3-25, to=4-24]
                    \arrow[from=4-4, to=5-4]
                    \arrow[from=4-10, to=5-10]
                    \arrow[from=4-24, to=5-24]
                    \arrow["\vdots"{description}, from=5-4, to=6-4]
                    \arrow["\vdots"{description}, from=5-10, to=6-10]
                    \arrow["\vdots"{description}, from=5-24, to=6-24]
                    \arrow[from=6-4, to=7-4]
                    \arrow[from=6-10, to=7-10]
                    \arrow[from=6-24, to=7-24]
                \end{tikzcd}
                }
                \end{center}
                where $\U, \SSS, \TT \not \subseteq \X,\Y,\Z$.
            \end{lemma}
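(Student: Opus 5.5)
The plan is to unwind Definition~\ref{def: latent_projection} edge by edge. Each edge of the subgraph (b) in $G' = Pj(G, \X \sqcup \Y \sqcup \Z)$ comes either from an edge of $G$ or is \emph{induced} by a path of $G$ whose interior avoids $\X \sqcup \Y \sqcup \Z$. Pick one such witness path for every edge and take the union of the witnesses. The remaining work is to check that this union has the displayed shape, after relabelling the interior vertices as $U_i$, $S$, or $T$.

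First consider $X \to Z_1$. By step (3) of Definition~\ref{def: latent_projection}, it is induced by a direct path $X \to \cdots \to Z_1$ whose interior lies outside $\X \sqcup \Y \sqcup \Z$, or it is already an edge of $G$. Set $U_0 := X$ and split this direct path into two segments: an $S$-segment from $X$, and a $T$-segment ending at $Z_1$. The edge $Z_k \leftarrow Y$ is handled the same way, with $U_k := Y$.

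Next consider a bidirected edge $Z_{i} \dashedleftrightarrow Z_{i+1}$. By step (2) it is induced by a trek $Z_i \leftarrow \cdots \leftarrow V \rightarrow \cdots \rightarrow Z_{i+1}$ with all interior nodes latent relative to $\X\sqcup\Y\sqcup\Z$. Here $V$ is an interior node, since an endpoint source would have produced a directed edge rather than a bidirected one. Let $U_i := V$ be the source of this trek. Its two legs are the $S$-chains descending from $U_i$. Each $Z_i$ then receives two incoming directed chains, one from $U_{i-1}$ and one from $U_i$. Let $T$ denote the last node at which these two chains meet, or the common tail segment when they merge before reaching $Z_i$. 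Everything below the meeting point is the vertical $T$-chain into $Z_i$. The drawn picture permits the degenerate case of empty $S$- or $T$-chains, which covers edges that were already present in $G$.

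The main obstacle is making the union genuinely of this form. Witness paths for different edges might share interior vertices, or a leg might pass through a vertex of another leg, which would create extra coincidences not shown in the picture. I would first argue that any such coincidences can only merge chains. In the worst case, I would shorten witnesses by choosing minimal ones, which re-routes through a shared vertex and still yields a trek or direct path of the required kind. I would also check that the statement only claims $G$ \emph{contains} such a subgraph, possibly with identifications, so that the later lifting steps can tolerate merged nodes. Finally, all interior nodes lie outside $\X \sqcup \Y \sqcup \Z$ by the definition of induced edges, which gives the condition $\U, \SSS, \TT \not\subseteq \X, \Y, \Z$.
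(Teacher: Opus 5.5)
Your route is the paper's own; its proof is a one-line appeal to Definition~\ref{def: latent_projection}: pick a witness for each edge of (b), namely an edge, a direct path, or a trek with latent source, and take the union. You are right that overlapping witnesses are the only real issue, but your way of disposing of it fails. Coincidences need not be merges of the two chains entering one $Z_i$, and for a fixed configuration they cannot always be removed by re-choosing witnesses. Let $G$ have edges $X\to L$, $U\to L$, $L\to Z_1$, $L\to Z_2$, $Y\to Z_2$, with $U,L$ latent. Then $G'$ contains $X\to Z_1\dashedleftrightarrow Z_2\leftarrow Y$. Yet the only trek inducing $Z_1\dashedleftrightarrow Z_2$ is $Z_1\leftarrow L\to Z_2$, and its source lies on the only direct path $X\to L\to Z_1$. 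Using $U$ as the source instead sends both legs through $L$. So $G$ contains no copy of the pictured graph, with distinct vertices, on these $X,Z_1,Z_2,Y$. (Also, the last common node of the two chains into $Z_i$ is always $Z_i$ itself; you mean the first.)

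The missing idea is to minimize the configuration, not just the witnesses. Take a (b)-subgraph of $G'$ with $k$ minimal.
Suppose legs entering different $Z_a,Z_b$ ($a<b$) meet. Their last common vertex $w$ is latent, and the two suffixes from $w$ form a trek, so $Z_a\dashedleftrightarrow Z_b$ in $G'$. For $b-a\ge 2$ this is a shortcut. For $b-a=1$, splicing through $w$ lets a source reach a further $Z$, producing $X\to Z_2$, $Y\to Z_{k-1}$, $Z_{a-1}\dashedleftrightarrow Z_{a+1}$ or $Z_a\dashedleftrightarrow Z_{a+2}$; disjointness of the remaining leg uses the case $b-a\ge 2$. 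Either way minimality is contradicted. A source $U_i$ lying on another leg is a special case of this.
Hence only the two legs into a common $Z_i$ can meet. Reroute one along the other from its first common vertex. This keeps both treks valid, since any new intersection would be of the excluded kind, and it yields exactly the merge-at-$T$ shape. In the example this picks $X\to Z_2\leftarrow Y$, with preimage $X\to L\to Z_2\leftarrow Y$.
This proves the lemma for some violating configuration, which is all Theorem~\ref{thm: cond_ii_necessary} needs.

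Your fallback of allowing identifications does not close the gap by itself. Step~7 (a fresh coin at each $U_i$, each $S$ copying one parent, each $T$ carrying two bits) would then have to be rewritten so that latent nodes carry tuples of copied bits. For instance, $L$ above would have to hold a fresh bit and relay $X$'s bit at the same time.
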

            \begin{proof}
                This follows directly from the construction of latent projections (Definition~\ref{def: latent_projection}).
            \end{proof}
            The most natural distribution on this pre-image is for the variables $\SSS$ and $\TT$ to copy their parents deterministically. However, this distribution is not positive. Non-positive distributions can, in fact, satisfy identifiability criteria while still violating the corresponding identifying formula.
            
            The solution is to construct a distribution $P$ whose marginal on $(X,Y,\Z)$ is arbitrarily close to that of $P'$. Failure for $P'$ to satisfy the front-door functional then implies failure for $P$ as well. Concretely, we make $\SSS$ and $\TT$ copy their parents with probability $\frac{n}{n+1}$ for $n$ sufficiently large.
        \vspace{6.8pt}

        \noindent \textbf{Step 7: Constructing $P$ on pre-image of (b).} 
        Let $U_0, ..., U_k$ be i.i.d fair coins $\mathcal{U}({0,1})$, with $X = U_0$ and $Y = U_k$. For each $i = 1, ..., k$ define
        $$Z_i \sim \text{ Uniform on the multiset } \left[0,1, Pa(Z_i)\right]$$
        exactly as in the counterexample model of Step 2.

        $\SSS$ and $\TT$ denote the auxiliary variables appearing in the pre-image of~(b), shown in Lemma~\ref{lem: preimage of b}. Let $n \in \mathbb{N}$. For any $S \in \SSS$, let
        \begin{align*}
            S &= \begin{cases} 
            Pa(S) & \text{with probability } \frac{n}{n+1} \\
            \neg Pa(S) & \text{with probability } \frac{1}{n+1} \\
            \end{cases}
        \end{align*}
        For any $T \in \TT$, let $T = (T_\ell, T_r).$ If $S \rightarrow T \leftarrow S'$ in $G$, define
        \begin{align*}
            Pa_\ell(T) := S, \\
            Pa_r(T) := S'.
        \end{align*}
        Otherwise, $T' \rightarrow T$ in $G$, and define
        \begin{align*}
            Pa_\ell(T) := T'_\ell, \quad Pa_r(T) := T'_r.
        \end{align*}
        Then, let
        \begin{align*}
            T_\ell &= \begin{cases} 
            Pa_\ell(T) & \text{with probability } \frac{n}{n+1} \\
            \neg Pa_\ell(T) & \text{with probability } \frac{1}{n+1} \\
            \end{cases}, \quad T_r &= \begin{cases} 
            Pa_r(T) & \text{with probability } \frac{n}{n+1} \\
            \neg Pa_r(T) & \text{with probability } \frac{1}{n+1} \\
            \end{cases} 
        \end{align*}
        \vspace{6.8pt}

        \noindent \textbf{Step 8: $P \to P'$ as $n \to \infty$.}
        By construction, each $S \in \SSS$ copies its parent with probability $\frac{n}{n+1}$, and each $T \in \TT$ copies its parent(s) with probability $\left(\frac{n}{n+1}\right)^2$. For sufficiently large $n$, this copying is near-perfect.

        \begin{proposition} \label{prop: P_n(E_n) -> 1}
            Fix a pre-image of (b) in $G$, and let $E$ denote the event that all $S \in \SSS$ and all components of $T \in \TT$ exactly equal their assigned parents. Then
            \[
            \lim_{n\to\infty} P_n(E) = 1.
            \]
        \end{proposition}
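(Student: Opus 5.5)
The plan is to bound $P_n(E)$ below by a product of independent success probabilities and then let $n \to \infty$. The key observation is that in the construction of Step~7, every $S \in \SSS$ and every component $T_\ell, T_r$ of every $T \in \TT$ is generated from its assigned parent by an independent ``flip'' noise variable. Explicitly, I would realize the model as follows. Take independent Bernoulli variables $\varepsilon_S$ for $S \in \SSS$, and $\varepsilon_{T,\ell}, \varepsilon_{T,r}$ for $T \in \TT$, each equal to $1$ with probability $\frac{1}{n+1}$. These are independent of $U_0,\dots,U_k$ and of the noise generating the $Z_i$. Then set
\[
S = Pa(S) \oplus \varepsilon_S, \qquad T_\ell = Pa_\ell(T) \oplus \varepsilon_{T,\ell}, \qquad T_r = Pa_r(T) \oplus \varepsilon_{T,r}.
\]
This reproduces exactly the conditional distributions specified in Step~7, so $P_n$ is the law of this structural model.

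With this realization, the event $E$ that every auxiliary variable equals its assigned parent coincides with the event that all flip variables vanish. In particular $E$ contains $\{\varepsilon_S = 0 \ \forall S\} \cap \{\varepsilon_{T,\ell} = \varepsilon_{T,r} = 0 \ \forall T\}$. By independence this gives
\[
P_n(E) \;\ge\; \left(\frac{n}{n+1}\right)^{|\SSS| + 2|\TT|}.
\]

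The second step is to note that the fixed pre-image of (b) is a finite subgraph of the finite DAG $G$, so $N := |\SSS| + 2|\TT|$ is a constant independent of $n$. Hence $(n/(n+1))^N \to 1$ as $n \to \infty$, and since $P_n(E) \le 1$, the limit is $1$.

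The only genuine subtlety, and the point I would check most carefully, is that ``equals its assigned parent'' is well defined when a $T$ has a $T'$ parent, and when an $S$ sits on a directed chain below some $U_i$. In those cases the assigned parent is itself a random variable that may have been flipped. Equality must therefore be read locally, comparing each variable to its immediate parent value rather than to the root $U_i$. The noise-variable representation handles this cleanly. If one instead wants $E$ to mean that each $T_\ell, T_r$ equals the originating $U_i$, then that event contains the event that all flips along the chain vanish, and the same bound applies. I would also remark that acyclicity of $G$ guarantees the recursive definition terminates, so that the noise representation is valid.
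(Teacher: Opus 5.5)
Your proposal is correct and matches the paper's argument: the paper writes $P_n(E) = \left(\frac{n}{n+1}\right)^{|\SSS| + 2|\TT|}$, notes the exponent is fixed, and takes $n \to \infty$. Your independent flip-variable realization makes explicit why that product formula holds, and stating it as a lower bound also covers the stricter reading of $E$ in which each variable must equal its originating $U_i$.
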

        \begin{proof}
            $$P_n(E) = \left(\frac{n}{n+1}\right)^{|\SSS| + 2|\TT|},$$
            where $|\SSS| + 2|\TT|$ is fixed. Taking the limit gives
            $$\lim_{n\to\infty}P_n(E)= 1.$$
            % For a single $S$-node with edge $Pa(S) \rightarrow S$, the conditional probability of perfect copying is
            % $$P_n(S = Pa(S) \mid Pa(S)) = \frac{n+1}{n+2}.$$
            % Since the graph is finite and each variable $S \in \SSS$ is independent conditional on their parents, the probability of the joint event $E_n$ is the product over all $S \in \SSS$,
            % $$P_n(E_n) = \left(\frac{n+1}{n+2}\right)^M.$$
            % For any fixed $M$,
            % $$\lim_{n\to\infty}P(E_n)= 1.$$
        \end{proof}
        Conditioned on perfect copying, $P$ is identical to $P'$ on $(X,Y,\Z)$:
        \begin{proposition} \label{lem: P_n | E_n = P'}
            $$P_n(X, Y, \Z \mid E) = P'(X,Y,\Z).$$
        \end{proposition}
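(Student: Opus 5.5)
The plan is to compute $P_n(X,Y,\Z \mid E)$ directly from the factorization of the lifted model on the pre-image of (b) given in Lemma~\ref{lem: preimage of b}, and to compare it with the definition of $P'$ from Step 2. The joint law $P_n$ over $U_0,\dots,U_k$, $\SSS$, $\TT$, $\Z$ factorizes as
\[
\prod_{i=0}^{k} P(u_i)\prod_{S\in\SSS}P_n(s\mid Pa(S))\prod_{T\in\TT}P_n(t_\ell\mid Pa_\ell(T))\,P_n(t_r\mid Pa_r(T))\prod_{i=1}^k P(z_i\mid Pa(Z_i)).
\]
Here, following Step 7, the parents of $Z_i$ are the terminal $T$ node of its chain, and $Z_i$ is uniform on the multiset $[0,1,T_\ell,T_r]$.

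First I will show that on $E$ every auxiliary variable is a deterministic function of $U_0,\dots,U_k$. Each $S$ on the chain descending from $U_{i-1}$ toward $Z_i$ equals $U_{i-1}$. Each $S$ descending from $U_i$ equals $U_i$. Every $T$ on the vertical chain above $Z_i$ has $(T_\ell,T_r)=(U_{i-1},U_i)$. This follows by induction along the directed paths. On $E$, therefore, $Pa(Z_i)$ is effectively $(U_{i-1},U_i)$, exactly as in Step 2.

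Next comes the key independence. Conditionally on $\U=\uu$, the event $E$ has probability $(n/(n+1))^{|\SSS|+2|\TT|}$. This does not depend on $\uu$, since each copy succeeds with probability $n/(n+1)$ whatever the parent's value. Hence $E$ is independent of $\U$, so $P_n(\uu\mid E)=2^{-(k+1)}$ is still i.i.d. fair. Moreover, the $Z_i$ are drawn after the auxiliary variables and depend only on their own parents, so given $E$ and $\uu$ they are distributed as $\prod_i P'(z_i\mid U_{i-1}=u_{i-1},U_i=u_i)$.

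Finally I will marginalize over $U_1,\dots,U_{k-1}$, keeping $X=U_0$ and $Y=U_k$. This gives
\[
P_n(x,y,\z\mid E)=\sum_{u_1,\dots,u_{k-1}}2^{-(k+1)}\prod_{i=1}^k P'(z_i\mid u_{i-1},u_i),
\]
which is precisely $P'(x,y,\z)$ by its definition in Step 2, equivalently the matrix-product formula of Step 3.

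The main obstacle is checking that conditioning on $E$ does not bias $\U$. This needs care with the bookkeeping of which component of each $T$ tracks which $U$. It also needs care with the fact that distinct chains in the pre-image might share nodes; if they do, each node is assigned a single parent role as in Step 7. I would handle both points by writing $P_n(E\mid \uu,\text{everything upstream})$ as a product of per-node success probabilities.
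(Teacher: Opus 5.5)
Your proposal is correct and follows the same route as the paper. On $E$, every auxiliary node copies the appropriate source, so each $Z_i$ is drawn given the values $(U_{i-1},U_i)$ exactly as in Step~2, and summing over the i.i.d.\ fair sources recovers $P'$. You also check explicitly that $P_n(E \mid \uu) = (n/(n+1))^{|\SSS|+2|\TT|}$ does not depend on $\uu$, so conditioning on $E$ leaves $U_0,\dots,U_k$ i.i.d.\ fair; the paper's proof uses this step without stating it.
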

        \begin{proof}
            The source nodes $U_0, \dots, U_k$ are i.i.d. fair coins in both constructions, and $X = U_0$, $Y = U_k$. In the event of perfect copying, for each $i = 1, \ldots, k$, $$Pa(Z_i) = \{U_{i-1}, U_i\}$$ as values, the same as in $P'$.  The source nodes both in (b) and its preimage are i.i.d. fair coins. Consequently, the joint distribution of $(X,Y,\Z)$ under $P$ conditioned on $E$ coincides exactly with $P'$,
            \[
            P_n(X, Y, \Z \mid E) = P'(X,Y,\Z).
            \]
        \end{proof}
    
        \begin{lemma} \label{lem: P -> P'}
            $$\lim_{n\to\infty}P_n(x, y, \z) = P'(x, y, \z).$$
        \end{lemma}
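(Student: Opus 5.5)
The plan is to use the law of total probability with respect to the event $E$ from Proposition~\ref{prop: P_n(E_n) -> 1}, and then let $n \to \infty$. For fixed values $(x,y,\z)$ we write
\[
P_n(x,y,\z) = P_n(x,y,\z \mid E)\,P_n(E) + P_n(x,y,\z \mid E^c)\,P_n(E^c).
\]
Positivity of $P_n(E)$ is clear, since $P_n(E) = \left(\tfrac{n}{n+1}\right)^{|\SSS|+2|\TT|} > 0$, so the first conditional probability is well defined. For the second term, if $P_n(E^c) = 0$ we simply drop it; otherwise we bound the conditional probability by $1$.

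By Proposition~\ref{lem: P_n | E_n = P'}, the first conditional term equals $P'(x,y,\z)$ exactly, for every $n$. This gives the estimate
\[
\bigl| P_n(x,y,\z) - P'(x,y,\z) \bigr| \le P'(x,y,\z)\,\bigl(1 - P_n(E)\bigr) + P_n(E^c) \le 2\bigl(1 - P_n(E)\bigr).
\]
By Proposition~\ref{prop: P_n(E_n) -> 1}, the right-hand side tends to $0$. Since $(x,y,\z)$ ranges over a finite set of binary values, the convergence is in fact uniform over all $(x,y,\z)$.

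The only point that needs care is whether Proposition~\ref{lem: P_n | E_n = P'} is legitimately available. It requires the flips defining $E$ to be independent of the source coins $U_0,\dots,U_k$ and of the randomness in the $Z_i$. This holds because every flip is an independent noise variable added at its own node, so conditioning on $E$ does not alter the law of $U_0,\dots,U_k$. Moreover, on $E$ the parents of each $Z_i$ take the values $U_{i-1}, U_i$, so each $Z_i$ has the same conditional law as in $P'$.

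This convergence is what we will use afterwards. The front-door functional and the interventional quantity $P_n{}_X(y) = P_n(y)$ are continuous functions of $P_n(x,y,\z)$ wherever the conditionals are positive, and $P'$ is strictly positive. Hence both sides converge to their $P'$ counterparts. Step~5 showed that these limits differ, so for large $n$ the front-door formula fails for $P_n$. Lemma~\ref{lem: fdf_fails_in_subgraph_implies_fails_in_supgraph} then lifts this failure from the pre-image of (b) to $G$.
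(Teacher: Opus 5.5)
Your proof is correct and follows the same route as the paper. You decompose $P_n(x,y,\z)$ by the law of total probability over the perfect-copying event $E$, replace the conditional term by $P'(x,y,\z)$ using the conditioning proposition, and let $P_n(E)\to 1$. Your explicit bound $2(1-P_n(E))$ and your check that $E$ depends only on the independent flip noises (so conditioning on it leaves $U_0,\dots,U_k$ and the $Z_i$ mechanisms unchanged) make explicit what the paper's argument takes for granted.
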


        \begin{proof}
            Expand by law of total probability
            \begin{align*}
                P_n(x, y, \z) &= P_n(x, y, \z \mid E)P(E) + P_n(x, y, \z \mid E^c) P_n(E^c) \\
                &= P'(x, y,\z) P_n(E) + P_n(x, y, \z \mid E^c) P_n(E^c) \tag{By Proposition~\ref{lem: P_n | E_n = P'}}
            \end{align*}
            As $n \to \infty$, $P_n(E) = 1$ and $P_n(E^c) = 0$ by Proposition~\ref{prop: P_n(E_n) -> 1}. So,
            $$\lim_{n\to\infty}P_n(x, y, \z) = P'(x, y, \z).$$
        \end{proof}
        
        Thus, for sufficiently large $n$, the distribution $P_n$ on the pre-image of (b) is arbitrarily close to $P'$, and the front-door formula fails. One can extend $P_n$ to a distribution $P$ on the full graph $G$ by Lemma~\ref{lem: fdf_fails_in_subgraph_implies_fails_in_supgraph}; This yields the desired counterexample.
        % \vspace{6.8pt}
        % \noindent \textbf{Step 9: Front-door formula fails for $n$ sufficiently large.}
        % Recall the definition of $S$ as the front-door formula evaluated at $x^* = 0, y = 0$ for $P'$. We define $S$ analogously for $P$. $S$ is a function of $P(x, y, \z)$. By \Cref{lem: P -> P'}, $$\lim_{n\to\infty}S = S.$$

        % The causal effect $P_{n,X}(y) = P_n(y)$ by Rule 3 of do-calc, so by \Cref{lem: P -> P'}, $$\lim_{n\to\infty}P_{n, X}(y) = P'(y) = P'_{X}(y).$$

        % Since the front-door formula fails for $P'$ ($S \neq P'_{X}(y))$), therefore
        % $$\lim_{n \to \infty}(S_n - P_{n,X}(y)) = S - P'_X(y) \neq 0.$$
        % This implies for sufficiently large $n$, $S_n \neq P_{n,X}(y)$. Hence we can choose sufficiently large $n$ such that the front-door formula fails for $P := P_n$ on the subgraph of $G$, and therefore fails on $G$ overall.
        \vspace{6.8pt}

        \noindent \textbf{Step 10: Pre-image of (c) in G and construction of P.}
        The process is near-identical to Steps 7-9 for (b). The graph $G$ will contain a subgraph of the form below.
        \begin{center}
            \scalebox{0.51}{
            % https://q.uiver.app/#q=WzAsMzksWzAsMCwiWD1VXzAiXSxbMSwxLCJTIl0sWzIsMiwiUyJdLFs2LDAsIlVfMSJdLFs3LDEsIlMiXSxbOCwyLCJTIl0sWzksMywiVCJdLFs5LDQsIlQiXSxbOSw1LCJUIl0sWzksNiwiWl8yIl0sWzEwLDIsIlMiXSxbMTEsMSwiUyJdLFsxMiwwLCJVXzIiXSxbMTMsMSwiUyJdLFsxNCwyLCJTIl0sWzE1LDNdLFsxNiwzLCJcXGNkb3RzIl0sWzE4LDIsIlMiXSxbMTcsM10sWzE5LDEsIlMiXSxbMjAsMCwiVV97ay0xfSJdLFsyMSwxLCJTIl0sWzIyLDIsIlMiXSxbMjMsMywiVCJdLFsyMyw0LCJUIl0sWzIzLDUsIlQiXSxbMjMsNiwiWl9rIl0sWzI0LDIsIlMiXSxbMjUsMSwiUyJdLFsyNiwwLCJVX2siXSxbMyw2LCJaXzEiXSxbMyw1LCJUIl0sWzMsNCwiVCJdLFs1LDEsIlMiXSxbNCwyLCJTIl0sWzMsMywiVCJdLFsyNywxLCJTIl0sWzI4LDIsIlMiXSxbMjksMywiWSJdLFswLDFdLFsxLDIsIlxcZGRvdHMiLDFdLFszLDRdLFs0LDUsIlxcZGRvdHMiLDFdLFs1LDZdLFs2LDddLFs3LDgsIlxcdmRvdHMiLDFdLFs4LDldLFsxMiwxMV0sWzEyLDEzXSxbMTMsMTQsIlxcZGRvdHMiLDFdLFsxNCwxNV0sWzE3LDE4XSxbMjAsMTldLFsyMCwyMV0sWzIxLDIyLCJcXGRkb3RzIiwxXSxbMjIsMjNdLFsxOSwxNywiXFxpZGRvdHMiLDFdLFsyMywyNF0sWzI0LDI1LCJcXHZkb3RzIiwxXSxbMjUsMjZdLFsyNywyM10sWzI4LDI3LCJcXGlkZG90cyIsMV0sWzMxLDMwXSxbMzIsMzEsIlxcdmRvdHMiLDFdLFszLDMzXSxbMzMsMzQsIlxcaWRkb3RzIiwxXSxbMiwzNV0sWzM1LDMyXSxbMzQsMzVdLFsyOSwzNl0sWzM2LDM3XSxbMzcsMzhdLFsxMSwxMCwiXFxpZGRvdHMiLDFdLFsxMCw2XSxbMjksMjhdXQ==
            \begin{tikzcd}[ampersand replacement=\&,sep=tiny]
                {X=U_0} \&\&\&\&\&\& {U_1} \&\&\&\&\&\& {U_2} \&\&\&\&\&\&\&\& {U_{k-1}} \&\&\&\&\&\& {U_k} \\
                \& S \&\&\&\& S \&\& S \&\&\&\& S \&\& S \&\&\&\&\&\& S \&\& S \&\&\&\& S \&\& S \\
                \&\& S \&\& S \&\&\&\& S \&\& S \&\&\&\& S \&\&\&\& S \&\&\&\& S \&\& S \&\&\&\& S \\
                \&\&\& T \&\&\&\&\&\& T \&\&\&\&\&\& {} \& \cdots \& {} \&\&\&\&\&\& T \&\&\&\&\&\& Y \\
                \&\&\& T \&\&\&\&\&\& T \&\&\&\&\&\&\&\&\&\&\&\&\&\& T \\
                \&\&\& T \&\&\&\&\&\& T \&\&\&\&\&\&\&\&\&\&\&\&\&\& T \\
                \&\&\& {Z_1} \&\&\&\&\&\& {Z_2} \&\&\&\&\&\&\&\&\&\&\&\&\&\& {Z_k}
                \arrow[from=1-1, to=2-2]
                \arrow[from=1-7, to=2-6]
                \arrow[from=1-7, to=2-8]
                \arrow[from=1-13, to=2-12]
                \arrow[from=1-13, to=2-14]
                \arrow[from=1-21, to=2-20]
                \arrow[from=1-21, to=2-22]
                \arrow[from=1-27, to=2-26]
                \arrow[from=1-27, to=2-28]
                \arrow["\ddots"{description}, from=2-2, to=3-3]
                \arrow["\iddots"{description}, from=2-6, to=3-5]
                \arrow["\ddots"{description}, from=2-8, to=3-9]
                \arrow["\iddots"{description}, from=2-12, to=3-11]
                \arrow["\ddots"{description}, from=2-14, to=3-15]
                \arrow["\iddots"{description}, from=2-20, to=3-19]
                \arrow["\ddots"{description}, from=2-22, to=3-23]
                \arrow["\iddots"{description}, from=2-26, to=3-25]
                \arrow[from=2-28, to=3-29]
                \arrow[from=3-3, to=4-4]
                \arrow[from=3-5, to=4-4]
                \arrow[from=3-9, to=4-10]
                \arrow[from=3-11, to=4-10]
                \arrow[from=3-15, to=4-16]
                \arrow[from=3-19, to=4-18]
                \arrow[from=3-23, to=4-24]
                \arrow[from=3-25, to=4-24]
                \arrow[from=3-29, to=4-30]
                \arrow[from=4-4, to=5-4]
                \arrow[from=4-10, to=5-10]
                \arrow[from=4-24, to=5-24]
                \arrow["\vdots"{description}, from=5-4, to=6-4]
                \arrow["\vdots"{description}, from=5-10, to=6-10]
                \arrow["\vdots"{description}, from=5-24, to=6-24]
                \arrow[from=6-4, to=7-4]
                \arrow[from=6-10, to=7-10]
                \arrow[from=6-24, to=7-24]
            \end{tikzcd}
            }
        \end{center}
        where $\U, \SSS, \TT \not \subseteq \X,\Y,\Z$.
        
        \noindent \textbf{Conclusion.} If $G$ violates (ii), there exists a probability distribution $P$ on $G$ for which $P_\X(\y)$ is not identified by the front-door formula.
    \end{proof}
\section{Concluding Remarks and Further Questions}
\label{sec: concluding}
    We have presented a new graphical criterion that strictly expands the class of causal graphs for which the front-door functional $P_\X(\y) = \sum_\z P(\z \mid \x) \sum_{\x'} P(\y \mid \z, \x') P(\x')$ is valid.

    However, our new criterion is still incomplete. We have established Condition (ii) ($\Z$ blocks all proper directed paths from $\X$ to $\Y$) is necessary, but Condition (i) (no back-door paths from $\X$ to $\Z$) is not, as shown by the counterexamples in \Cref{fig: fdc_2}(b).

    \begin{center}
        \scalebox{0.51}{% https://q.uiver.app/#q=WzAsMTI0LFs3LDEsIlgiXSxbOCwxLCJaXzEiXSxbOSwxLCJZIl0sWzgsMCwiWl8yIl0sWzUsMCwiWiJdLFs0LDAsIlgiXSxbNiwwLCJZIl0sWzExLDEsIlgiXSxbMTIsMSwiWl8xIl0sWzEzLDEsIlkiXSxbMTMsMCwiWl80Il0sWzExLDAsIlpfMiJdLFsxMiwwLCJaXzMiXSxbMCw3LCJYIl0sWzEsNywiWl8xIl0sWzIsNywiWl8yIl0sWzMsNywiWSJdLFsyLDYsIlpfMyJdLFsxLDksIlgiXSxbMiw5LCJaXzEiXSxbMyw5LCJZIl0sWzIsOCwiWl8yIl0sWzEsMSwiWCJdLFsyLDEsIlpfMSJdLFszLDEsIlkiXSxbMiwwLCJaXzIiXSxbMiwyLCJaXzMiXSxbMCw0LCJYIl0sWzEsNCwiWl8xIl0sWzMsNCwiWSJdLFsxLDUsIlpfNCJdLFsyLDQsIlpfMiJdLFsyLDMsIlpfMyJdLFs0LDIsIlhfMSJdLFs1LDIsIlpfMSJdLFs2LDIsIlkiXSxbNCwxLCJaXzIiXSxbNSwxLCJYIl0sWzYsMSwiWl8zIl0sWzQsNCwiWF8xIl0sWzUsNCwiWl8xIl0sWzYsNCwiWSJdLFs0LDMsIlpfMiJdLFs1LDMsIlgiXSxbNiwzLCJaXzMiXSxbNCw2LCJYXzEiXSxbNSw2LCJaXzEiXSxbNiw2LCJZIl0sWzQsNSwiWl8yIl0sWzUsNSwiWCJdLFs2LDUsIlpfMyJdLFs0LDgsIlhfMSJdLFs1LDgsIlpfMSJdLFs2LDgsIlkiXSxbNCw3LCJaXzIiXSxbNSw3LCJYIl0sWzYsNywiWl8zIl0sWzExLDMsIlhfMSJdLFsxMiwzLCJaXzEiXSxbMTMsMywiWSJdLFsxMSwyLCJaXzIiXSxbMTIsMiwiWCJdLFsxMywyLCJaXzMiXSxbMTEsNSwiWF8xIl0sWzEyLDUsIlpfMSJdLFsxMyw1LCJZIl0sWzExLDQsIlpfMiJdLFsxMiw0LCJYIl0sWzEzLDQsIlpfMyJdLFsxNSw5LCJYXzIiXSxbMTQsOSwiWF8xIl0sWzE2LDksIloiXSxbMTcsOSwiWSJdLFsxMSw2LCJYIl0sWzEyLDYsIlpfMSJdLFsxMyw2LCJZIl0sWzEyLDcsIlpfMiJdLFsxMSw4LCJYIl0sWzEyLDgsIlpfMSJdLFsxMyw4LCJZIl0sWzEyLDksIlpfMiJdLFs3LDYsIlgiXSxbOCw2LCJaXzEiXSxbOSw2LCJZIl0sWzgsNywiWl8yIl0sWzcsOCwiWCJdLFs4LDgsIlpfMSJdLFs5LDgsIlkiXSxbOCw5LCJaXzIiXSxbMTQsMCwiWCJdLFsxNSwwLCJaXzEiXSxbMTYsMCwiWSJdLFsxNCwxLCJaXzIiXSxbMTUsMSwiWl8zIl0sWzE0LDIsIlgiXSxbMTUsMiwiWl8xIl0sWzE2LDIsIlkiXSxbMTQsMywiWl8yIl0sWzE1LDMsIlpfMyJdLFs3LDIsIlgiXSxbOCwyLCJaXzEiXSxbOSwyLCJZIl0sWzgsMywiWl8zIl0sWzE1LDQsIlhfMiJdLFsxNCw0LCJaXzQiXSxbMTUsNSwiWF8xIl0sWzE2LDUsIlpfMSJdLFsxNyw1LCJZIl0sWzE2LDQsIlpfMyJdLFsxNiw2LCJaXzIiXSxbMTQsOCwiWCJdLFsxNSw4LCJaXzEiXSxbMTYsOCwiWSJdLFsxNCw3LCJaXzIiXSxbMTUsNywiWl8zIl0sWzcsNSwiWCJdLFs4LDUsIlpfMSJdLFs5LDUsIlkiXSxbNyw0LCJaXzIiXSxbOCw0LCJaXzMiXSxbMTAsMF0sWzEwLDEwXSxbNSwxMCwiXFx0ZXh0eyhhKX0iXSxbMTQsMTAsIlxcdGV4dHsoYil9Il0sWzAsMV0sWzEsMl0sWzMsMiwiIiwyLHsic3R5bGUiOnsidGFpbCI6eyJuYW1lIjoiYXJyb3doZWFkIn0sImJvZHkiOnsibmFtZSI6ImRhc2hlZCJ9fX1dLFszLDBdLFs0LDVdLFs0LDYsIiIsMix7InN0eWxlIjp7InRhaWwiOnsibmFtZSI6ImFycm93aGVhZCJ9LCJib2R5Ijp7Im5hbWUiOiJkYXNoZWQifX19XSxbNyw4XSxbOCw5XSxbOSwxMF0sWzcsMTFdLFsxMSw4XSxbMTIsMTFdLFsxMiwxMCwiIiwxLHsic3R5bGUiOnsidGFpbCI6eyJuYW1lIjoiYXJyb3doZWFkIn0sImJvZHkiOnsibmFtZSI6ImRhc2hlZCJ9fX1dLFs3LDksIiIsMSx7ImN1cnZlIjozLCJzdHlsZSI6eyJ0YWlsIjp7Im5hbWUiOiJhcnJvd2hlYWQifSwiYm9keSI6eyJuYW1lIjoiZGFzaGVkIn19fV0sWzE1LDE3XSxbMTYsMTcsIiIsMSx7InN0eWxlIjp7InRhaWwiOnsibmFtZSI6ImFycm93aGVhZCJ9LCJib2R5Ijp7Im5hbWUiOiJkYXNoZWQifX19XSxbMTMsMTRdLFsxNCwxNV0sWzE1LDE2XSxbMTMsMTUsIiIsMSx7ImN1cnZlIjoyLCJzdHlsZSI6eyJ0YWlsIjp7Im5hbWUiOiJhcnJvd2hlYWQifSwiYm9keSI6eyJuYW1lIjoiZGFzaGVkIn19fV0sWzEzLDE2LCIiLDEseyJjdXJ2ZSI6Mywic3R5bGUiOnsidGFpbCI6eyJuYW1lIjoiYXJyb3doZWFkIn0sImJvZHkiOnsibmFtZSI6ImRhc2hlZCJ9fX1dLFsxOSwxOF0sWzIwLDE5LCIiLDAseyJzdHlsZSI6eyJ0YWlsIjp7Im5hbWUiOiJhcnJvd2hlYWQifSwiYm9keSI6eyJuYW1lIjoiZGFzaGVkIn19fV0sWzIxLDIwXSxbMjMsMjJdLFsyNCwyMywiIiwwLHsic3R5bGUiOnsidGFpbCI6eyJuYW1lIjoiYXJyb3doZWFkIn0sImJvZHkiOnsibmFtZSI6ImRhc2hlZCJ9fX1dLFsyNSwyMywiIiwwLHsic3R5bGUiOnsidGFpbCI6eyJuYW1lIjoiYXJyb3doZWFkIn0sImJvZHkiOnsibmFtZSI6ImRhc2hlZCJ9fX1dLFsyMiwyNl0sWzI2LDI0XSxbMjgsMjddLFsyNywzMF0sWzMwLDI5XSxbMzEsMjhdLFsyOSwzMSwiIiwwLHsic3R5bGUiOnsidGFpbCI6eyJuYW1lIjoiYXJyb3doZWFkIn0sImJvZHkiOnsibmFtZSI6ImRhc2hlZCJ9fX1dLFszMiwzMSwiIiwwLHsic3R5bGUiOnsidGFpbCI6eyJuYW1lIjoiYXJyb3doZWFkIn0sImJvZHkiOnsibmFtZSI6ImRhc2hlZCJ9fX1dLFszMywzNF0sWzM0LDM1XSxbMzYsMzMsIiIsMCx7InN0eWxlIjp7InRhaWwiOnsibmFtZSI6ImFycm93aGVhZCJ9LCJib2R5Ijp7Im5hbWUiOiJkYXNoZWQifX19XSxbMzYsMzddLFszOCwzN10sWzM4LDM1XSxbMzksNDBdLFs0MCw0MV0sWzQyLDM5LCIiLDAseyJzdHlsZSI6eyJ0YWlsIjp7Im5hbWUiOiJhcnJvd2hlYWQifSwiYm9keSI6eyJuYW1lIjoiZGFzaGVkIn19fV0sWzQ0LDQzXSxbNDQsNDFdLFsyMSwxOV0sWzQ1LDQ2XSxbNDYsNDddLFs0OCw0NSwiIiwwLHsic3R5bGUiOnsidGFpbCI6eyJuYW1lIjoiYXJyb3doZWFkIn0sImJvZHkiOnsibmFtZSI6ImRhc2hlZCJ9fX1dLFs0OCw0OV0sWzUwLDQ5XSxbNTEsNTJdLFs1Miw1M10sWzU0LDUxLCIiLDAseyJzdHlsZSI6eyJ0YWlsIjp7Im5hbWUiOiJhcnJvd2hlYWQifSwiYm9keSI6eyJuYW1lIjoiZGFzaGVkIn19fV0sWzU2LDU1XSxbNDMsNDJdLFs0Nyw1MF0sWzUzLDU2XSxbNTUsNTRdLFs1Nyw1OF0sWzU4LDU5XSxbNjAsNTcsIiIsMCx7InN0eWxlIjp7InRhaWwiOnsibmFtZSI6ImFycm93aGVhZCJ9LCJib2R5Ijp7Im5hbWUiOiJkYXNoZWQifX19XSxbNjAsNjFdLFs2Miw1OV0sWzYzLDY0XSxbNjQsNjVdLFs2Niw2MywiIiwwLHsic3R5bGUiOnsidGFpbCI6eyJuYW1lIjoiYXJyb3doZWFkIn0sImJvZHkiOnsibmFtZSI6ImRhc2hlZCJ9fX1dLFs2OCw2NV0sWzYxLDYyXSxbNjcsNjhdLFs2Nyw2Nl0sWzY5LDcxXSxbNzEsNzJdLFs2OSw3MF0sWzczLDc0XSxbNzQsNzVdLFs3Niw3MywiIiwxLHsic3R5bGUiOnsidGFpbCI6eyJuYW1lIjoiYXJyb3doZWFkIn0sImJvZHkiOnsibmFtZSI6ImRhc2hlZCJ9fX1dLFs3Nyw3OF0sWzc4LDc5XSxbODAsNzddLFs4Miw4M10sWzg0LDgxXSxbODIsODFdLFs4OCw4NV0sWzg2LDg1XSxbODcsODZdLFs4OSw5MF0sWzkwLDkxXSxbODksOTIsIiIsMCx7InN0eWxlIjp7InRhaWwiOnsibmFtZSI6ImFycm93aGVhZCJ9LCJib2R5Ijp7Im5hbWUiOiJkYXNoZWQifX19XSxbOTMsOTEsIiIsMCx7InN0eWxlIjp7InRhaWwiOnsibmFtZSI6ImFycm93aGVhZCJ9LCJib2R5Ijp7Im5hbWUiOiJkYXNoZWQifX19XSxbOTQsOTVdLFs5NSw5Nl0sWzk0LDk3LCIiLDAseyJzdHlsZSI6eyJ0YWlsIjp7Im5hbWUiOiJhcnJvd2hlYWQifSwiYm9keSI6eyJuYW1lIjoiZGFzaGVkIn19fV0sWzk3LDk4XSxbOTYsOThdLFs5OSwxMDBdLFsxMDAsMTAxXSxbMTAyLDEwMV0sWzk5LDEwMiwiIiwyLHsic3R5bGUiOnsidGFpbCI6eyJuYW1lIjoiYXJyb3doZWFkIn0sImJvZHkiOnsibmFtZSI6ImRhc2hlZCJ9fX1dLFsxMDQsMTAzLCIiLDIseyJzdHlsZSI6eyJ0YWlsIjp7Im5hbWUiOiJhcnJvd2hlYWQifSwiYm9keSI6eyJuYW1lIjoiZGFzaGVkIn19fV0sWzEwNSwxMDZdLFsxMDYsMTA3XSxbMTAzLDEwOF0sWzEwOCwxMDddLFsxMDMsMTA1XSxbMTA2LDEwOV0sWzEwNywxMDksIiIsMSx7InN0eWxlIjp7InRhaWwiOnsibmFtZSI6ImFycm93aGVhZCJ9LCJib2R5Ijp7Im5hbWUiOiJkYXNoZWQifX19XSxbMTA3LDEwNSwiIiwxLHsiY3VydmUiOjIsInN0eWxlIjp7InRhaWwiOnsibmFtZSI6ImFycm93aGVhZCJ9LCJib2R5Ijp7Im5hbWUiOiJkYXNoZWQifX19XSxbMTEwLDExMV0sWzExMSwxMTJdLFsxMTMsMTEwXSxbMTEzLDExNF0sWzExMSwxMTRdLFsxMTQsMTEyLCIiLDEseyJzdHlsZSI6eyJ0YWlsIjp7Im5hbWUiOiJhcnJvd2hlYWQifSwiYm9keSI6eyJuYW1lIjoiZGFzaGVkIn19fV0sWzExNSwxMTZdLFsxMTYsMTE3XSxbMTE4LDExNV0sWzExOCwxMTldLFsxMTYsMTE5XSxbMTE5LDExNywiIiwxLHsic3R5bGUiOnsidGFpbCI6eyJuYW1lIjoiYXJyb3doZWFkIn0sImJvZHkiOnsibmFtZSI6ImRhc2hlZCJ9fX1dLFsxMTUsMTE3LCIiLDEseyJjdXJ2ZSI6Mywic3R5bGUiOnsidGFpbCI6eyJuYW1lIjoiYXJyb3doZWFkIn0sImJvZHkiOnsibmFtZSI6ImRhc2hlZCJ9fX1dLFs5Miw5M10sWzEyMCwxMjEsIiIsMSx7InN0eWxlIjp7ImhlYWQiOnsibmFtZSI6Im5vbmUifX19XV0=
        \begin{tikzcd}[ampersand replacement=\&]
        	\&\& {Z_2} \&\& X \& Z \& Y \&\& {Z_2} \&\& {} \& {Z_2} \& {Z_3} \& {Z_4} \& X \& {Z_1} \& Y \\
        	\& X \& {Z_1} \& Y \& {Z_2} \& X \& {Z_3} \& X \& {Z_1} \& Y \&\& X \& {Z_1} \& Y \& {Z_2} \& {Z_3} \\
        	\&\& {Z_3} \&\& {X_1} \& {Z_1} \& Y \& X \& {Z_1} \& Y \&\& {Z_2} \& X \& {Z_3} \& X \& {Z_1} \& Y \\
        	\&\& {Z_3} \&\& {Z_2} \& X \& {Z_3} \&\& {Z_3} \&\&\& {X_1} \& {Z_1} \& Y \& {Z_2} \& {Z_3} \\
        	X \& {Z_1} \& {Z_2} \& Y \& {X_1} \& {Z_1} \& Y \& {Z_2} \& {Z_3} \&\&\& {Z_2} \& X \& {Z_3} \& {Z_4} \& {X_2} \& {Z_3} \\
        	\& {Z_4} \&\&\& {Z_2} \& X \& {Z_3} \& X \& {Z_1} \& Y \&\& {X_1} \& {Z_1} \& Y \&\& {X_1} \& {Z_1} \& Y \\
        	\&\& {Z_3} \&\& {X_1} \& {Z_1} \& Y \& X \& {Z_1} \& Y \&\& X \& {Z_1} \& Y \&\&\& {Z_2} \\
        	X \& {Z_1} \& {Z_2} \& Y \& {Z_2} \& X \& {Z_3} \&\& {Z_2} \&\&\&\& {Z_2} \&\& {Z_2} \& {Z_3} \\
        	\&\& {Z_2} \&\& {X_1} \& {Z_1} \& Y \& X \& {Z_1} \& Y \&\& X \& {Z_1} \& Y \& X \& {Z_1} \& Y \\
        	\& X \& {Z_1} \& Y \&\&\&\&\& {Z_2} \&\&\&\& {Z_2} \&\& {X_1} \& {X_2} \& Z \& Y \\
        	\&\&\&\&\& {\text{(a)}} \&\&\&\&\& {} \&\&\&\& {\text{(b)}}
        	\arrow[dashed, tail reversed, from=1-3, to=2-3]
        	\arrow[from=1-6, to=1-5]
        	\arrow[dashed, tail reversed, from=1-6, to=1-7]
        	\arrow[from=1-9, to=2-8]
        	\arrow[dashed, tail reversed, from=1-9, to=2-10]
        	\arrow[no head, from=1-11, to=11-11]
        	\arrow[from=1-12, to=2-13]
        	\arrow[from=1-13, to=1-12]
        	\arrow[dashed, tail reversed, from=1-13, to=1-14]
        	\arrow[from=1-15, to=1-16]
        	\arrow[dashed, tail reversed, from=1-15, to=2-15]
        	\arrow[from=1-16, to=1-17]
        	\arrow[from=2-2, to=3-3]
        	\arrow[from=2-3, to=2-2]
        	\arrow[dashed, tail reversed, from=2-4, to=2-3]
        	\arrow[from=2-5, to=2-6]
        	\arrow[dashed, tail reversed, from=2-5, to=3-5]
        	\arrow[from=2-7, to=2-6]
        	\arrow[from=2-7, to=3-7]
        	\arrow[from=2-8, to=2-9]
        	\arrow[from=2-9, to=2-10]
        	\arrow[from=2-12, to=1-12]
        	\arrow[from=2-12, to=2-13]
        	\arrow[curve={height=18pt}, dashed, tail reversed, from=2-12, to=2-14]
        	\arrow[from=2-13, to=2-14]
        	\arrow[from=2-14, to=1-14]
        	\arrow[from=2-15, to=2-16]
        	\arrow[dashed, tail reversed, from=2-16, to=1-17]
        	\arrow[from=3-3, to=2-4]
        	\arrow[from=3-5, to=3-6]
        	\arrow[from=3-6, to=3-7]
        	\arrow[from=3-8, to=3-9]
        	\arrow[dashed, tail reversed, from=3-8, to=4-9]
        	\arrow[from=3-9, to=3-10]
        	\arrow[from=3-12, to=3-13]
        	\arrow[dashed, tail reversed, from=3-12, to=4-12]
        	\arrow[from=3-13, to=3-14]
        	\arrow[from=3-14, to=4-14]
        	\arrow[from=3-15, to=3-16]
        	\arrow[dashed, tail reversed, from=3-15, to=4-15]
        	\arrow[from=3-16, to=3-17]
        	\arrow[from=3-17, to=4-16]
        	\arrow[dashed, tail reversed, from=4-3, to=5-3]
        	\arrow[dashed, tail reversed, from=4-5, to=5-5]
        	\arrow[from=4-6, to=4-5]
        	\arrow[from=4-7, to=4-6]
        	\arrow[from=4-7, to=5-7]
        	\arrow[from=4-9, to=3-10]
        	\arrow[from=4-12, to=4-13]
        	\arrow[from=4-13, to=4-14]
        	\arrow[from=4-15, to=4-16]
        	\arrow[from=5-1, to=6-2]
        	\arrow[from=5-2, to=5-1]
        	\arrow[from=5-3, to=5-2]
        	\arrow[dashed, tail reversed, from=5-4, to=5-3]
        	\arrow[from=5-5, to=5-6]
        	\arrow[from=5-6, to=5-7]
        	\arrow[from=5-8, to=5-9]
        	\arrow[from=5-8, to=6-8]
        	\arrow[dashed, tail reversed, from=5-9, to=6-10]
        	\arrow[dashed, tail reversed, from=5-12, to=6-12]
        	\arrow[from=5-13, to=5-12]
        	\arrow[from=5-13, to=5-14]
        	\arrow[from=5-14, to=6-14]
        	\arrow[dashed, tail reversed, from=5-15, to=5-16]
        	\arrow[from=5-16, to=5-17]
        	\arrow[from=5-16, to=6-16]
        	\arrow[from=5-17, to=6-18]
        	\arrow[from=6-2, to=5-4]
        	\arrow[from=6-5, to=6-6]
        	\arrow[dashed, tail reversed, from=6-5, to=7-5]
        	\arrow[from=6-7, to=6-6]
        	\arrow[from=6-8, to=6-9]
        	\arrow[curve={height=18pt}, dashed, tail reversed, from=6-8, to=6-10]
        	\arrow[from=6-9, to=5-9]
        	\arrow[from=6-9, to=6-10]
        	\arrow[from=6-12, to=6-13]
        	\arrow[from=6-13, to=6-14]
        	\arrow[from=6-16, to=6-17]
        	\arrow[from=6-17, to=6-18]
        	\arrow[from=6-17, to=7-17]
        	\arrow[curve={height=12pt}, dashed, tail reversed, from=6-18, to=6-16]
        	\arrow[dashed, tail reversed, from=6-18, to=7-17]
        	\arrow[from=7-5, to=7-6]
        	\arrow[from=7-6, to=7-7]
        	\arrow[from=7-7, to=6-7]
        	\arrow[from=7-9, to=7-8]
        	\arrow[from=7-9, to=7-10]
        	\arrow[from=7-12, to=7-13]
        	\arrow[from=7-13, to=7-14]
        	\arrow[from=8-1, to=8-2]
        	\arrow[curve={height=12pt}, dashed, tail reversed, from=8-1, to=8-3]
        	\arrow[curve={height=18pt}, dashed, tail reversed, from=8-1, to=8-4]
        	\arrow[from=8-2, to=8-3]
        	\arrow[from=8-3, to=7-3]
        	\arrow[from=8-3, to=8-4]
        	\arrow[dashed, tail reversed, from=8-4, to=7-3]
        	\arrow[dashed, tail reversed, from=8-5, to=9-5]
        	\arrow[from=8-6, to=8-5]
        	\arrow[from=8-7, to=8-6]
        	\arrow[from=8-9, to=7-8]
        	\arrow[dashed, tail reversed, from=8-13, to=7-12]
        	\arrow[from=8-15, to=8-16]
        	\arrow[from=8-15, to=9-15]
        	\arrow[dashed, tail reversed, from=8-16, to=9-17]
        	\arrow[from=9-3, to=10-3]
        	\arrow[from=9-3, to=10-4]
        	\arrow[from=9-5, to=9-6]
        	\arrow[from=9-6, to=9-7]
        	\arrow[from=9-7, to=8-7]
        	\arrow[from=9-9, to=9-8]
        	\arrow[from=9-10, to=9-9]
        	\arrow[from=9-12, to=9-13]
        	\arrow[from=9-13, to=9-14]
        	\arrow[from=9-15, to=9-16]
        	\arrow[from=9-16, to=8-16]
        	\arrow[from=9-16, to=9-17]
        	\arrow[from=10-3, to=10-2]
        	\arrow[dashed, tail reversed, from=10-4, to=10-3]
        	\arrow[from=10-9, to=9-8]
        	\arrow[from=10-13, to=9-12]
        	\arrow[from=10-16, to=10-15]
        	\arrow[from=10-16, to=10-17]
        	\arrow[from=10-17, to=10-18]
        \end{tikzcd}}
    
    \end{center}
    
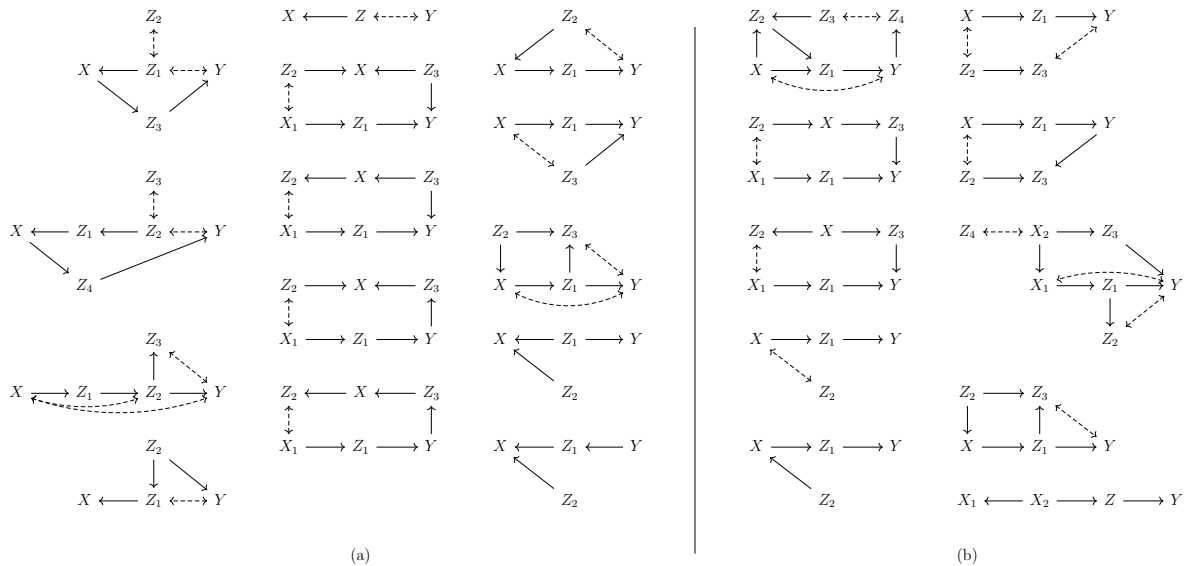
\captionof{figure}{Graphs violating condition (i) where (a) $P_\X(\y)$ does not match the front-door functional; (b) $P_\X(\y)$ \textit{does} match by the front-door functional.} \label{fig: fdc_2}

    The most immediate next step is to resolve this incompleteness by finding a necessary and sufficient graphical criterion for the front-door functional. Future work could pursue this goal or seek to characterize other common identifying functionals.

\bibliographystyle{plain}
\bibliography{sources}    

\appendix

\section{Intro to Causal Inference} \label{apx: prelim prelim}.
    \subsection{Basic Definitions}
        \begin{definition}[Directed Acyclic Graph (DAG)]
            A \textit{directed acyclic graph (DAG)} is a graph $G = (\V, \E)$ where each edge $E \in \E$ is directed (i.e., of the form $V_1 \to V_2$ where $V_1, V_2 \in \V$), and $G$ contains no directed cycles.
        \end{definition}

        We denote parents, ancestors, children, and descendants of a set of nodes $\Z \subseteq G$ as $Pa(\Z)$, $An(\Z)$, $Ch(\Z)$, and $De(\Z)$ respectively.

        \begin{definition} [Factorization over a DAG]
            Let $G = (\V, \E)$ be a DAG. A joint distribution $P(\vv)$ is said to \textit{factorize} according to $G$ if
            \[
            P(\vv) = \prod_{v \in \V} P\left(v \mid Pa(v)\right),
            \]
            where $Pa(v)$ denotes the set of parents of $V$ in $G$.

            If $G = (\V \sqcup \U, \E)$ includes unobserved variables $\U$, the distribution over observed variables is given by the marginal
            \[
                P(\vv) = \sum_{\uu} P(\vv, \uu).
            \]
        \end{definition}

        Directed edges $X \to Y$ in $G$ indicate $X$ is a direct cause of $Y$, so $G$ represents a hypothesized causal model underlying an observational distribution $P(\vv)$.
    \subsection{D-separation}
        Beyond factorization, a DAG can be understood through the conditional independencies it implies. These graphical independencies are often simpler to reason about than the factorization itself. The following examples illustrate the correspondence.
        \begin{example}
            Consider the graphs below:
            
            % https://q.uiver.app/#q=WzAsMjAsWzEsMCwiWCJdLFsyLDAsIloiXSxbMywwLCJZIl0sWzIsMSwiWiJdLFsxLDIsIlgiXSxbMywyLCJZIl0sWzEsMywiWCJdLFsyLDQsIlpfMSJdLFszLDMsIlkiXSxbMiw1LCJaXzIiXSxbNiwwLCJYXFxwZXJwXFwhXFwhXFwhXFxwZXJwIFkgXFxtaWQgWiJdLFs1LDAsIlggXFxub3RcXHBlcnBcXCFcXCFcXCFcXHBlcnAgWSJdLFs1LDIsIlggXFxub3RcXHBlcnBcXCFcXCFcXCFcXHBlcnAgWSJdLFs2LDIsIlhcXHBlcnBcXCFcXCFcXCFcXHBlcnAgWSBcXG1pZCBaIl0sWzYsNCwiWCBcXG5vdFxccGVycFxcIVxcIVxcIVxccGVycCBZIFxcbWlkIFpfMSJdLFs1LDQsIlggXFxwZXJwXFwhXFwhXFwhXFxwZXJwIFkiXSxbNyw0LCJYIFxcbm90XFxwZXJwXFwhXFwhXFwhXFxwZXJwIFkgXFxtaWQgWl8yIl0sWzAsMCwiXFx0ZXh0eyhhKX0iXSxbMCwyLCJcXHRleHR7KGIpfSJdLFswLDQsIlxcdGV4dHsoYyl9Il0sWzAsMV0sWzEsMl0sWzMsNF0sWzMsNV0sWzYsN10sWzgsN10sWzcsOV1d
            \begin{center}
            \scalebox{0.8}{
            \begin{tikzcd}[ampersand replacement=\&]
                {\text{(a)}} \& X \& Z \& Y \&\& {X \not\perp\!\!\!\perp Y} \& {X\perp\!\!\!\perp Y \mid Z} \\
                \&\& Z \\
                {\text{(b)}} \& X \&\& Y \&\& {X \not\perp\!\!\!\perp Y} \& {X\perp\!\!\!\perp Y \mid Z} \\
                \& X \&\& Y \\
                {\text{(c)}} \&\& {Z_1} \&\&\& {X \perp\!\!\!\perp Y} \& {X \not\perp\!\!\!\perp Y \mid Z_1} \& {X \not\perp\!\!\!\perp Y \mid Z_2} \\
                \&\& {Z_2}
                \arrow[from=1-2, to=1-3]
                \arrow[from=1-3, to=1-4]
                \arrow[from=2-3, to=3-2]
                \arrow[from=2-3, to=3-4]
                \arrow[from=4-2, to=5-3]
                \arrow[from=4-4, to=5-3]
                \arrow[from=5-3, to=6-3]
            \end{tikzcd}}
            \end{center}
            \captionof{figure}{}\label{fig: d-separation}
            \begin{enumerate}
                \item [(a)] There is a direct causal chain $X \rightarrow Z \rightarrow Y$, so $X \not \indep Y$. However, $X$ influences $Y$ only through their $Z$. Once $Z$ is known, learning $X$ provides no additional information about $Y$. Thus, $X \indep Y \mid Z$. 
                \item [(b)] $X$ and $Y$ share a common parent $Z$. Knowing $X$ gives us partial information about $Y$ through their common ancestry, like siblings who share genetic material, so $X \not \indep Y$. But if we already know the parent $Z$, then $X$ tells us nothing further about $Y$, and again $X \indep Y \mid Z$. 
                \item [(c)] $X$ and $Y$ are the parents of a common child $Z_1$. Just as two parents typically do not share genetic material directly, $X$ and $Y$ are independent. Yet once we condition on their child $Z_1$ (or even a grandchild $Z_2$), knowing one parent gives us information about the other through what they jointly contribute to the descendant. Thus $X \not\indep Y \mid Z_1$ (and likewise given $Z_2$), even though $X \indep Y$.
            \end{enumerate}
        \end{example}
        These examples illustrate the three fundamental ways that paths can transmit or block dependence between variables. In larger graphs, deducing independencies is simply an extension of these base rules. We now formalize this idea in the following definitions:
        
        \begin{definition}
            Let $V$ be an interior node on a path $\pi$. $V$ is a
            \begin{enumerate}
                \item \textit{collider} if both edges adjacent to $V$ on $\pi$ have arrowheads pointing towards $V$, i.e., $\cdots \rightarrow V \leftarrow \cdots$.
                \item \textit{non-collider} if $V$ is not a collider on $\pi$, i.e., $\cdots \leftarrow V \leftarrow \cdots$, $\cdots \leftarrow V \rightarrow \cdots$, or $\cdots \rightarrow V \rightarrow \cdots$.
            \end{enumerate}
        \end{definition}
        
        \begin{definition} [Open / Closed Paths]
            Let $G = (\V \sqcup \U, \E)$, $\Z \subset \V$. A path $\pi$ from $X$ to $Y$ is \textit{open} given (or conditioned on) $\Z$ if
            \begin{enumerate}
                \item All non-colliders in $\pi$ are not in $\Z$
                \item All colliders in $\pi$ are in $An(\Z)$
            \end{enumerate}
            A path is \textit{closed} if it is not open.
        \end{definition}

        The following definition is the graphical criterion for independence given by a probability distribution $P$'s graph.
        \begin{definition} [D-separation]
            Let $G = (\V \sqcup \U, \E)$ and $\X, \Y, \Z \subset \V$. We say that $\X$ and $\Y$ are \textit{d-separated} by $\Z$ (denoted $\X \indep_G \Y \mid \Z$) if there is no path from any node in $\X$ to any node in $\Y$ that is open given $\Z$. 
            
            We say that $\X$ and $\Y$ are \textit{d-connected} if they are not d-separated.
        \end{definition}
        
        Intuitively, d-separation tells us when conditioning on $\Z$ blocks all paths through which probabilistic dependence between $\X$ and $\Y$ could arise.

        \begin{remark}
            We often drop the subscript $G$ in $\indep_G$ when the corresponding graph is unambiguous.
        \end{remark}
        
        We then associate a graph with a probability distribution as follows:
        \begin{definition}[Independence Map]
            A DAG $G$ is an \emph{independence map} of a probability distribution $P$ if for all disjoint sets $\X, \Y, \Z$:
            \[
            (\X \indep_G \Y \mid \Z) \implies (\X \indep_P \Y \mid \Z).
            \]
            In other words, every d-separation in $G$ implies a corresponding conditional independence in $P$. The conditional independencies encoded by $G$ therefore provide a “map” of the independence structure of $P$, hence the term independence map.
        \end{definition}
        
        \begin{remark}
            The converse is not guaranteed: $\X \not\!\indep_G \Y \mid \Z$ does not necessarily imply $\X \not\!\indep_P \Y \mid \Z$ if $G$ is an independence map of $P$. When this occurs, we say that $P$ is \emph{unfaithful} to $G$.
        \end{remark}

        \begin{theorem}[Equivalence of Factorization and D-Separation] \label{thm: factorization == i-map}
            For any DAG $G$ and probability distribution $P$, the following are equivalent:
            \begin{enumerate}
                \item $P$ factorizes according to $G$.
                \item $G$ is an independence map of $P$.
            \end{enumerate}
        \end{theorem}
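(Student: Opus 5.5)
We prove the two implications separately. Throughout we use the paper's standing assumption that $P$ is strictly positive, so every conditional probability is well defined.

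\textbf{(2) $\Rightarrow$ (1).} Fix a topological order $V_1,\dots,V_m$ of $G$, so that every parent of $V_i$ comes before $V_i$. The chain rule gives
\[
P(\vv)=\prod_{i=1}^{m}P(v_i\mid v_1,\dots,v_{i-1}).
\]
Let $\mathbf{R}_i:=\{V_1,\dots,V_{i-1}\}\setminus Pa(V_i)$. We will show the d-separation $(V_i \indep \mathbf{R}_i \mid Pa(V_i))_G$. Then the independence-map property gives $P(v_i\mid v_1,\dots,v_{i-1})=P(v_i\mid Pa(v_i))$, which is exactly the factorization.

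To prove the d-separation, take any path $\pi$ from $V_i$ to some $R\in\mathbf{R}_i$, and look at the first edge of $\pi$.
\begin{enumerate}
\item[(a)] If the first edge is $V_i\leftarrow W$, then $W\in Pa(V_i)$. Here $W$ is an interior non-collider of $\pi$ lying in the conditioning set, so $\pi$ is blocked.
\item[(b)] If the first edge is $V_i\to W$, then $W$ is a descendant of $V_i$. Every element of $\mathbf{R}_i$ precedes $V_i$, so $R$ is not a descendant of $V_i$. Follow $\pi$ from $V_i$ along edges that point away from $V_i$. Before reaching $R$, this directed stretch must end at a node where the next edge points back, and that node is a collider $C\in De(V_i)$.
\end{enumerate}
For $\pi$ to be open, $C$ would have to lie in $An(Pa(V_i))$. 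Then $C$ would be both a descendant of $V_i$ and an ancestor of $V_i$, which is a directed cycle. Hence $\pi$ is blocked in this case too.

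\textbf{(1) $\Rightarrow$ (2).} This is the global Markov property, and we would argue via moralization. Let $\X,\Y,\Z$ be disjoint with $(\X\indep\Y\mid\Z)_G$, and set $\mathbf{A}:=An(\X\cup\Y\cup\Z)$.

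\emph{Reduction to $\mathbf{A}$.} Summing a factorization over a node with no children removes one factor and leaves a factorization over the induced subgraph. Repeating this, the marginal $P(\mathbf{a})$ factorizes according to $G_{\mathbf{A}}$.

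\emph{Undirected factorization.} Form the moral graph $(G_{\mathbf{A}})^m$ by joining every pair of co-parents and dropping edge directions. Each factor $P(v\mid Pa(v))$ is a function of the clique $\{v\}\cup Pa(v)$ of this graph, so $P(\mathbf{a})$ factorizes over cliques of $(G_{\mathbf{A}})^m$.

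\emph{Key graph lemma.} We need: $\Z$ d-separates $\X$ from $\Y$ in $G$ if and only if $\Z$ separates $\X$ from $\Y$ in $(G_{\mathbf{A}})^m$. Only the direction "d-separated $\Rightarrow$ separated" is used. We prove its contrapositive: from a path in $(G_{\mathbf{A}})^m$ avoiding $\Z$, build an open path in $G$. Replace each moral edge between co-parents by the collider through their common child; that child lies in $\mathbf{A}$, hence in $An(\X\cup\Y\cup\Z)$. Then repair colliders that are ancestors only of $\X\cup\Y$ and not of $\Z$ by rerouting through a directed path to an endpoint. Finally, shorten the walk to a path and check every node.

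\emph{Conclusion.} Given separation, let $\mathbf{C}$ be the union of the connected components of $(G_{\mathbf{A}})^m\setminus\Z$ that meet $\X$. Every clique lies in $\mathbf{C}\cup\Z$ or in $\mathbf{A}\setminus\mathbf{C}$. Grouping the factors accordingly gives $P(\mathbf{a})=f(\mathbf{c},\z)\,g(\mathbf{a}\setminus\mathbf{c})$. Marginalizing then yields $P(\x,\y,\z)=f'(\x,\z)\,g'(\y,\z)$, which is equivalent to $\X\indep\Y\mid\Z$.

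\textbf{Main obstacle.} The factor-grouping steps are routine. The expected difficulty is the moralization lemma, in particular the bookkeeping for colliders that are ancestors of $\X\cup\Y$ but not of $\Z$, while ensuring the constructed walk really is a simple open path. If that becomes unwieldy, the fallback is to prove d-separation $\Rightarrow$ CI by induction on $|\V|$: remove a sink node, using that the marginal factorizes over the smaller graph and that d-separations among the remaining nodes persist there.
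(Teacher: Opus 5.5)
The paper gives no proof of this statement. It is quoted in the background appendix as the classical equivalence of factorization and the global Markov property, so your proposal reconstructs the standard argument rather than following anything in the paper.

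Your direction (2)$\Rightarrow$(1) is complete:
\begin{itemize}
\item In case (a), $W$ is genuinely interior because $\mathbf{R}_i$ excludes $Pa(V_i)$.
\item In case (b), the first collider is a proper descendant of $V_i$, so it cannot lie in $An(Pa(V_i))$.
\end{itemize}
Positivity is not actually needed in either direction, but appealing to the standing assumption is harmless. In (1)$\Rightarrow$(2), the ancestral reduction, the clique factorization over $(G_{\mathbf{A}})^m$, and the grouping by components of $(G_{\mathbf{A}})^m\setminus\Z$ meeting $\X$ are all correct. You are also right that only ``d-separated $\Rightarrow$ moral-separated'' is needed.

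The gap is the step you pass over as ``shorten the walk to a path and check every node.'' This is where the content of the moralization lemma sits, and a naive check fails. When you splice out the loop between the first and last occurrences of a repeated vertex $v$, its new status is set by the edge entering the first occurrence and the edge leaving the last. Suppose $v$ was a non-collider at both occurrences, so $v\notin\Z$, but both outer edges point into $v$. Then $v$ becomes a collider, and nothing local gives $v\in An(\Z)$.

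The missing argument is this. In that case the excised loop leaves $v$ along an out-edge $v\to b$ and returns along an out-edge $v\to c$ traversed backwards. So it must switch from descending to ascending somewhere. The first switch is a collider occurrence $w$ with a directed path $v\to\cdots\to w$. Every collider on your repaired walk lies in $An(\Z)$, so $v\in An(\Z)$ as well. The other cases are automatic: a collider occurrence gives $v\in An(\Z)$, and a non-collider occurrence gives $v\notin\Z$. Splice repeated vertices from left to right, so that each excised loop and its two outer edges are untouched pieces of the repaired walk.

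The repair step likewise needs an ordering when several bad colliders occur. First reroute the prefix at the last bad collider lying in $An(\X)$. Then reroute the suffix at the first later bad collider, which must lie in $An(\Y)$. The new directed segments avoid $\Z$ because their sources are not in $An(\Z)$. With these two points filled in, the moralization route goes through.

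Your stated fallback would not work as described. After restricting to $An(\X\cup\Y\cup\Z)$, every sink lies in $\X\cup\Y\cup\Z$ and cannot simply be summed out. That induction would therefore need a genuine case analysis using the semi-graphoid axioms.
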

        This theorem shows that reasoning about conditional independencies via d-separation is equivalent to reasoning via factorization: the DAG captures both the algebraic structure of $P$ and its independence relationships.

        \begin{remark}
            \Cref{thm: factorization == i-map} may be false if $G$ is not a DAG, as in undirected graphical models.
        \end{remark}
    \subsection{Interventions} 
        The central object of causal inference is not the observed distribution $P(\vv)$ itself, but the \textit{interventional distribution}—how this distribution would change under external manipulation of the system. Formally,
        
        \begin{definition}[Interventional Distribution]
            The \textit{interventional distribution}
            \[
            P(\vv \mid \mathrm{do}(\X=\x^*)) 
            = \sum_{\uu}\prod_{v \in (\V \sqcup \U) \setminus \X} P(v \mid Pa(v)) \cdot \mathbbm{1}_{\x=\x^*}.
            \]
            is obtained by externally fixing $\X$ to $\x^*$, thereby removing all causal influence of the parents of $\X$.

            Graphically, this corresponds to deleting all incoming edges into $\X$.
        \end{definition}

        \begin{definition}[Causal Effect]
            For a set of outcome variables $\Y$, the \textit{causal effect} of $\X$ on $\Y$ is the marginal
            \[
            P(\y \mid \mathrm{do}(\X=\x^*)) = \sum_{(\vv\sqcup \uu) \setminus \y } P(\vv\setminus \x \mid \mathrm{do}(\X=\x^*)).
            \]
        \end{definition}
        % It is essential to distinguish intervention from conditioning. Conditioning on $\X=\x^*$ restricts attention to observational units for which $X$ happened to take the value $\x^*$, leaving the underlying causal mechanisms unchanged. An intervention, by contrast, actively enforces $\X=\x^*$, severing the causal influence of its parents and generally producing a different distribution.

        In many real-world settings, direct experimental interventions\textemdash where one externally fixes an independent variable $X$ to observe its causal effect on a dependent variable $Y$\textemdash can be morally or financially infeasible. For instance, one cannot induce stress in pregnant mothers to observe its effects on fetal development.  In such cases, purely observational data may be the only source of information available.

        The central challenge of causal inference is then to determine when and how interventional distributions can be \textit{identified} solely from observational data, given assumptions encoded by a causal graph. This paper is concerned with understanding the conditions under which such identification is possible.

\section{Heuristic Derivations} \label{apx: heuristic_derivations}
    We give a brief outline of the counterfactual intuition which guided the generalized front-door criterion stated in \Cref{thm: main}. We aim to convey the reasoning behind the criterion, without presenting full technical derivations.

    Our approach was inspired by Shpitser's \cite{SHPITSER10} generalization of the back-door criterion, which established an equivalence to the counterfactual condition $\Y_\X \indep \X \mid \Z$ (known as conditional ignorability).

    We began by analyzing the counterfactual proof of Pearl's front-door criterion \cite[Section~7.3.2]{PEARL09}. There, condition (3) of \Cref{thm: fdc} helps establish the independence $\Z_{\X} \indep \Y_{\Z} \mid \X$, which is used to derive the key equality $P(\Y_\z \mid \x, \Z_{\x^*} = \z) = P(\y \mid \x, \z)$. We hypothesized that this specific independence was sufficient but not necessary to produce the equality. We conjectured a weaker set of counterfactual independence conditions could be found, corresponding to a weaker graphical criterion.
    
    Guided by an analysis of examples that violate the original front-door criterion yet for which the front-door functional is valid, we identified a vaguely natural partition of the set $\Z$ into children of $\X$ ($\Z^{ch}$) and non-children ($\Z^{nch}$). We then sought graphical conditions under which the independence conditions $\Y_{\Z} \indep \Z^{ch}_{\X} \mid \Z, \X$ and $\Y_{\Z} \indep \Z^{ch} \mid \Z_{\X}, \X$ would hold. Analysis using Twin Network Graphs \cite{SHPITSER10,SHPITSER08} yielded a first-draft graphical criterion slightly stronger than the final form presented in Theorem \ref{thm: main}. Culling the extraneous conditions was done by observing several examples in which they were unnecessary for the front-door functional to hold.

    Once a (hypothetically) correct set of graphical conditions was established, the completeness of do-calculus \cite{HUANG2006b} guaranteed that a do-calculus derivation of the front-door functional must exist; our proof in \Cref{apx: pf_main} provides this derivation, which we deduced by mirroring the structure of the counterfactual argument. 
   
\end{document}